\newtheorem {theorem} {Theorem} [section]
\newtheorem {lemma}[theorem] {Lemma} 
\newtheorem {prop} [theorem]{Proposition}
\newtheorem {cor} [theorem]{Corollary}
\theoremstyle{remark}
\newtheorem {remark} [theorem]{Remark}
\newtheorem {example} [theorem]{Example}
\newcommand {\R} {\mathbb{R}}
\def\fr#1#2{\frac{\partial #1}{\partial #2}}
\newcommand{\dd}{d}
\newcommand{\cd}{\, d}
\newcommand{\e}{\mathrm e}
\DeclareMathOperator{\E}{\mathsf e}
\DeclareMathOperator{\re}{\Sigma}
\DeclareMathOperator{\ep}{\mathsf I}
\begin{document}

\title[Velocity of ratchets]{On the bulk velocity of Brownian ratchets}

\author[S.~Kondratyev]{Stanislav Kondratyev}
\address[S.~Kondratyev]{CMUC, Department of
Mathematics, University of Coimbra, 3001-501 Coimbra, Portugal}{}
\email{kondratyev@mat.uc.pt}

\author[J.M.~Urbano]{Jos\'{e} Miguel Urbano}
\address[J.M.~Urbano]{CMUC, Department of
Mathematics, University of Coimbra, 3001-501 Coimbra, Portugal}{}
\email{jmurb@mat.uc.pt}

\author[D.~Vorotnikov]{Dmitry Vorotnikov}
\address[D.~Vorotnikov]{CMUC, Department of
Mathematics, University of Coimbra, 3001-501 Coimbra, Portugal}{}
\email{mitvorot@mat.uc.pt}

\thanks{The third author is grateful to Danielle Hilhorst, Steven Lade, Rafael 
Ortega and Fabio Zanolin for very useful correspondence. The research was 
partially supported by CMUC, funded by COMPETE and FCT under 
the project PEst-C/MAT/UI0324/2013, and by FCT projects UTA-CMU/MAT/0007/2009 
and PTDC/MAT-CAL/0749/2012.}

\keywords{Brownian motor, tilting ratchet, stochastic Stokes' drift, 
Fokker--Planck equation, periodic solution, transport, relative entropy}

\numberwithin{equation}{section}

\subjclass[2010]{26D10; 35Q84; 35Q92; 47H10; 60J70} 



\begin{abstract}
In this paper we study the unidirectional transport effect for Brownian 
ratchets modeled by Fokker--Planck-type equations.  In particular, we consider 
the adiabatic and semiadiabatic limits for tilting ratchets, generic ratchets 
with small diffusion, and the multi-state chemical ratchets. Having 
established a linear relation between the bulk transport velocity and the 
bi-periodic solution, and using relative entropy estimates and new functional 
inequalities, we obtain explicit asymptotic formulas for the transport 
velocity and qualitative results concerning the direction of transport. In 
particular, we prove the conjecture by Blanchet, Dolbeault and Kowalczyk 
that the bulk velocity of the stochastic Stokes' drift is non-zero for every 
non-constant potential.
\end{abstract}

\maketitle

\section{Introduction}
\label{intro}

\emph{Brownian ratchets} or \emph{Brownian motors} are generic terms for tiny devices which are able to produce unidirectional transport of matter when all
acting forces and gradients vanish after averaging over space and time, and at 
the presence of (and often due to) overdamped Brownian motion 
\cite{as97,fi02,hmn05,par02,rei02,rh02}.  Motor proteins are generally 
considered to be the most celebrated example of Brownian ratchets \cite{ah03}.  
However, during recent years there has been a huge progress \cite{hm09} in 
realizing and observing bulk motion without net bias in SQUIDs, Josephson 
junctions, cold atoms in optical lattices, nanopores, etc., as well as in 
microfluidics experiments.  The Stokes' drift with diffusion is also an 
example of a Brownian ratchet mechanism \cite{bena00,hmn05,jl98}.  Although 
the idea of micro-level motors goes back to the dawn of thermodynamics, the 
discovery of ratchets has boosted the contemporary nano-technological interest 
in the development of hybrid and artificial molecular machines 
\cite{bf06,klz07}.

The dynamics of a ratchet can be described by the Fokker--Planck equation
\begin{equation} \label{eq:int-general-ratchet}
\rho_t - \sigma \rho_{xx} - (\Psi_x \rho)_x = 0, 
\end{equation}
where $\sigma$ is the given diffusion coefficient, and $\rho(x, t)$ is the 
unknown probability density of distribution of Brownian particles governed by 
a given potential $\Psi(x, t)$, which is supposed to be $T$-periodic in time 
$t$, and to have a $1$-periodic in $x$ derivative $\Psi_x(x, t)$.  Note that 
we do not assume the potential $\Psi(x, t)$ itself to be $x$-periodic, so 
various tilting regimes are allowed, and the `tilting forces' are contained 
within the potential. 

A related model which has particular relevance in biological applications is 
the chemical motor.  Here the particles can be in several states, and the 
total amount of particles is fixed.  Particles in different states are 
sensitive to different time-independent potentials.  The underlying chemical 
processes  cause transitions between the particles' states, which we can 
consider to be random.  This is described by the following system of 
Fokker--Planck-type equations:
\begin{equation}
\label{eq:int-general-system}
(\rho_i)_t - \sigma(\rho_i)_{xx} - ((\Psi_i)_x \rho_i)_x + \sum\limits_{j,\ j\neq i}^{}\nu_{ji} \rho_i  = \sum\limits_{j,\ j\neq i}^{}\nu_{ij} \rho_j, \ i=1,\dots, N,
\end{equation}
where $\Psi_i(x)$ are the given potentials, and $\sigma$ is the diffusion 
coefficient (for definiteness, we set it to be the same for all states).  We 
assume that $(\Psi_i)_x(x)$ (not the $\Psi_i$ themselves) and the transition 
rates $\nu_{ij}(x)$ are $1$-periodic. 

Various particular cases of \eqref{eq:int-general-ratchet} and 
\eqref{eq:int-general-system}, including the so-called \emph{flashing 
ratchets}, were studied in \cite{bdk08, bdk09, chk04, chko09, dkk04, hkl07, 
kk02,ms13,ps09a,ps09,ps11,vor11,vor13}.  To catch the motor effect, the 
majority of these papers consider  \eqref{eq:int-general-ratchet} or 
\eqref{eq:int-general-system} with no-flux boundary conditions on a bounded 
segment, and show, under appropriate assumptions, that the mass is eventually 
concentrated closer to one edge of the segment than to the other.  Yet 
equation \eqref{eq:int-general-ratchet} with the \emph{travelling} potential 
$\Psi(x, t) = \psi(x-\omega t)$ and the \emph{flashing} potential $\Psi(x, t) 
= h(t)\psi(x)$, where $\psi$ is $1$-periodic, and
\begin{equation*}
h(t)=
\begin{cases}
1 & \text{ if } kT < t \le (k+1/2)T , \\
0 & \text{ if } (k + 1/2)T < t \le (k+1)T,
\end{cases}
\quad (k = 0, 1, \dots)
\end{equation*}
was examined on the whole real line in \cite{bdk08, bdk09} and \cite{ms13}, 
respectively.  In \cite{ms13} it was observed that the solutions of a 
homogenized equation tend to propagate with constant speed or not to move at 
all as the period length goes to zero.  In \cite{bdk08, bdk09} it was shown 
that with the course of time the velocity of the centre of mass eventually 
becomes the same for all solutions (see also \cite{da11}).  Moreover, this 
asymptotic speed is equal to \begin{equation}
\label{eq:int-def-asymptotic}
v_\infty
=
- \frac 1T
\int_0^T
\int_{0}^{1}
\Psi_x(x, t) g_\infty(x, t)
\cd x
\cd t
,
\end{equation} where $g_\infty$ is the bi-periodic (in $x$ and $t$) travelling wave solution to \eqref{eq:int-general-ratchet}. 

In this paper, we develop a unified approach for detecting transport for 
generic equations \eqref{eq:int-general-ratchet} and 
\eqref{eq:int-general-system}.  We prove that the averaged velocity stabilizes 
as time goes to infinity, and the limiting velocity is independent of the 
solution.  We establish a linear relation between this velocity and a certain 
solution to \eqref{eq:int-general-ratchet} or \eqref{eq:int-general-system}, 
respectively.  This solution is actually the bi-periodic solution in the case 
of \eqref{eq:int-general-ratchet}, and is the stationary $x$-periodic solution 
vector in the case of \eqref{eq:int-general-system}.  This allows us to obtain 
a more explicit characterization of the occurrence of unidirectional transport, its 
direction and bulk velocity, for $1$- and $2$-state tilting ratchets, in the 
adiabatic and semiadiabatic regimes, for the stochastic Stokes' drift, and for 
generic low-diffusion-driven $1$- and $2$-state ratchets 
\eqref{eq:int-general-ratchet} and \eqref{eq:int-general-system}.  We also prove the 
conjecture stated by Blanchet, Dolbeault, and 
Kowalczyk in~\cite{bdk08} (see also \cite{rei02}) that the bulk velocity of 
the stochastic Stokes' drift is non-zero for every non-constant potential.

The paper is organized as follows.  In Section~\ref{sectionbv} we set the 
framework for our research.  In particular, we define the transport in terms 
of the asymptotic average bulk velocity and relate it to the bi-periodic 
solution of an auxiliary space-periodic problem.

In Section~\ref{altr} we consider the adiabatic regime for tilting ratchets, 
i.e., we suppose that the ratchet spends a long time in each of its states.  
In Theorem~\ref{th:adiabatic-limit} we justify the explicit 
formula~\eqref{eq:adiab-oooo} for the adiabatic transport velocity.  
Developing this topic, we state an effective formula for the direction of 
transport (Proposition~\ref{pr:shsh}).  Finally, Theorem~\ref{th:adiab-foo} 
gives a qualitative result showing that a major interval of monotonicity of 
the potential implies a particular direction of transport.  A highlight of 
Section~\ref{altr} is Proposition~\ref{pr:phiphi} establishing a nontrivial 
functional inequality.

In Section~\ref{slssd} we study the so-called semiadiabatic regime for tilting ratchets, when the 
overall period of tilting goes to infinity and one of the tilting states 
dominates the other.  We give an effective explicit 
formula~\eqref{eq:semiad-limit-velocity} for the semiadiabatic transport 
velocity and prove that non-constant potentials produce nonzero semiadiabatic 
transport in one and the same direction---Theorem~\ref{th:semiad-velocity} and 
Corollaries \ref{cor:semiad-lim}~and \ref{cor:semiad-pos}.  These results are partially based on the functional inequality related with Proposition~\ref{pr:funct}, which also implies the
conjecture of~\cite{bdk08} (see also \cite{rei02}) that the stochastic Stokes' drift generates unidirectional transport for every non-constant potential. 

In Section~\ref{sdc} we consider generic Brownian ratchets with small diffusion 
coefficient and show that there is a connection between the transport and a certain ODE.  
We show (Theorem~\ref{odeth}) that if this ODE does not have periodic solutions 
or, equivalently, if its Poincar\'e rotation number in nonzero, there appears 
directed transport of mass.

In Section~\ref{msm} we extend our approach to multi-state models.  The most 
interesting results are obtained for the case of small diffusion: 
Theorem~\ref{odesys} classifies it with respect to the geometry of zeroes of 
the potential gradients and establishes the direction of transport in 
different cases.  Finally, Theorem~\ref{rtasa} treats the adiabatic and semiadiabatic regimes for the 
randomly tilting ratchet.

\section {Bulk velocity}
\label{sectionbv}

\subsection{Unidirectional transport}

We model the dynamics of a ratchet by the Fokker--Planck equation
\begin{equation}
\label{eq:general-ratchet}
\left\{
\begin{array}{l}
\rho_t - \sigma \rho_{xx} - (\Psi_x \rho)_x = 0, \quad x \in \R,\ t > 0, \\
\rho = \rho_0(x), \quad x \in \R.
\end{array}
\right.
\end{equation}
Here $\Psi(x, t)$ is a given potential, and $\rho_0$ is a given initial 
condition.  We assume that $\Psi(x, t)$ is $T$-periodic in $t$ and its 
derivative $\Psi_x(x, t)$ is $1$-periodic in $x$, and we also assume that 
$\rho_0(x)$ satisfies the requirements
\begin{equation}
\label{eq:ic-requirements}
\rho_0(x) \ge 0,\ \int_{-\infty}^\infty \rho_0 (x) \cd x = 1,\ \int_{-\infty}^\infty |x| \rho_0(x) \cd x < 
\infty.
\end{equation}
We are interested in nonnegative solutions of \eqref{eq:general-ratchet}.  
Such solutions may be viewed as non-stationary distributions of a unit mass on 
$\R$ whose movement is governed by a potential force and by diffusion.

The most interesting cases arise when $\Psi$ is `unbiased' in the sense that
\begin{equation}
\label{eq:unbiasedPsi}
\int_0^T \int_0^1 \Psi_x(x,t) \cd x \cd t = 0,
\end{equation}
i.e., the time and space average of the potential gradient vanishes.  However, 
for the sake of generality, in the sequel we do not assume 
\eqref{eq:unbiasedPsi} unless explicitly specified.

It can be proved by classical methods (see e.g.~\cite{pe84}) that if $\Psi$ is 
continuous in $(x, t)$ and $C^{2,\alpha}$-regular in $x$, where $\alpha \in 
(0, 1)$ is independent of $t$, then \eqref{eq:general-ratchet} is uniquely 
solvable for any continuous initial data $\rho_0$ satisfying 
\eqref{eq:ic-requirements}; moreover, the solution $\rho(x, t)$ is positive 
for any $t > 0$, and
\begin{equation}
\label{eq:limit}
\lim_{x \to \pm\infty}
|x|(|\rho(x, t)|+|\rho_x(x,t)|)
= 0
.
\end{equation}
A consequence of \eqref{eq:limit} is the \emph{conservation of mass}
\begin{equation}
\label{eq:mass-conservation}
\int_{-\infty}^\infty \rho \, \dd x = 1
\end{equation}
and the finiteness of the \emph{centre of mass}
\begin{equation}
\label{eq:centre-of-mass}
\bar x(t)
=
\int_{-\infty}^{\infty}
x \rho(x, t)
\,\dd x
\end{equation}
for any $t$.  We study the asymptotic behaviour of solutions of 
\eqref{eq:general-ratchet} as $t \to \infty$.  Properties \eqref{eq:limit}, 
\eqref{eq:mass-conservation}, and \eqref{eq:centre-of-mass} are crucial for 
our approach.

To catch the unidirectional transport effect, we consider the velocity of the 
centre of mass $\bar x(t)$, which is called the \emph{drift} (or \emph{bulk}, 
or \emph{ballistic}) \emph{velocity}.  Specifically, we consider the average 
drift velocity on the interval $[t, t + T]$, and if it has a nonzero limit as 
$t \to +\infty$, we say we have unidirectional transport.

Due to the periodic nature of the problem at issue, the drift velocity is 
conveniently characterized by means of the following problem on the circle 
$S^1 = \R / \mathbb Z$:
\begin{gather}
\label{eq:per-pr}
g_t - \sigma g_{xx} + (F g)_x = 0, \quad (x, t) \in S^1 \times (0, \infty),
\\
\label{eq:per-pr-ic}
g(x, 0) = g_0(x), \quad x \in S^1; \quad
g_0(x) \ge 0, \ \int_{S^1}g_0(x) \cd x = 1
.
\end{gather}
Here $F(x, t)$ is defined on $S^1 \times (0, \infty)$; generally we assume 
that it is $T$-periodic in $t$. 

To start with, observe that if $\rho$ is a nonnegative solution 
of~\eqref{eq:general-ratchet}, then, by linearity, the nonnegative function
\begin{equation}
\label{eq:rho-g}
g(x, t)
=
\sum_{k = -\infty}^\infty
\rho(x + k, t)
\end{equation}
solves~\eqref{eq:per-pr} with $F = - \Psi_x$.

Consider the average velocity of the centre of mass as $t$ varies from $t_0$ 
to $t_0 + T$:
\begin{multline*}
v_{[t_0,t_0+T]} :=
\frac{
\bar x(t_0 + T) - \bar x(t_0)
}{
T
}
\\
=
\frac 1T
\left(
\int_{-\infty}^{\infty}
x \rho(x, t_0 + T) \cd x
-
\int_{-\infty}^{\infty}
x \rho(x, t_0) \cd x
\right)
\\
=
\frac 1T
\int_{-\infty}^{\infty}
x
\int_{t_0}^{t_0+T} \rho_{t} (x, t) \cd t
\cd x
\\
=
\frac 1T
\int_{t_0}^{t_0+T}
\int_{-\infty}^{\infty}
x (\sigma \rho_{xx} + (\Psi_x \rho)_x)
\cd x
\cd t
\\
=
\frac 1T
\int_{t_0}^{t_0+T}
\left(
- \sigma \int_{-\infty}^{\infty} \rho_{x} 
\cd x
-
\int_{-\infty}^{\infty}
 \Psi_x \rho
\cd x
\right)
\dd t
\\
=
-
\frac 1T
\int_{t_0}^{t_0+T}
\sum_{k=-\infty}^{\infty}
\int_{0}^{1}
\Psi_x(x + k, t) \rho(x + k, t)
\cd x
\cd t
\\
=
- \frac 1T
\int_{t_0}^{t_0+T}
\int_{0}^{1}
\Psi_x(x, t) g(x, t)
\cd x
\cd t
\end{multline*}
(here we have used \eqref{eq:limit} and the periodicity of $\Psi_x$).  Thus we 
have the following formula for the average drift velocity:
\begin{equation}
\label{eq:av-vel}
v_{[t_0, t_0+T]} =
- \frac 1T
\int_{t_0}^{t_0+T}
\int_{0}^{1}
\Psi_x(x, t) g(x, t)
\,\dd x
\,\dd t
.
\end{equation}

\begin{remark}
We point out that the conservation of mass holds for 
\eqref{eq:per-pr}--\eqref{eq:per-pr-ic}, i.e.,
\begin{equation}
\label{eq:per-mass-conservation}
\int_{S^1} g(x, t) \cd x = 1
\end{equation}
for any solution thereof.
\end{remark}

\subsection{Bi-periodic solution}

The notion of relative entropy and related inequalities are a useful tool for 
the study of the Fokker--Planck equation~\eqref{eq:per-pr}.

Given $g, h \in L^1_+(S^1)$ such that
\begin{equation*}
\int_{S^1}
g
\cd x
=
1
=
\int_{S^1}
h
\cd x
,
\end{equation*}
define the \emph{relative entropy} of $g$ with respect to $h$ by
\begin{equation*}
\E [ g | h ] =
\int_{S^1}
g \ln \frac gh
\cd x
.
\end{equation*}
Observe that $\E [g | h] \ge 0$ (the possibility $\E [g | h ] = \infty$ is 
not excluded).  Indeed, letting $r = g/h \ge 0$ we have
\begin{multline*}
\E [g | h]
=
\int_{S^1}
g \ln r
\cd x
=
\int_{S^1}
(g \ln r - g + h)
\cd x
\\
=
\int_{S^1}
h (r \ln r - r + 1)
\cd x
\ge 0
\end{multline*}
as $r \ln r - r + 1 = \int_1^r \ln \xi \cd \xi \ge 0$.  Moreover, $\E[g|h] = 
0$ if and only if $g = h$ almost everywhere.  This follows from the definition 
and from the fact that the relative entropy controls $L^1$ distance.  
Specifically, for probability densities $g, h \in L_+^1(S^1)$ we have the 
known \emph{Csisz\'{a}r--Kullback inequality} \cite{csi75}:
\begin{equation}
\label{eq:cs-k}
\| g - h \|_{L^1(S^1)}^2 \le 2 \E [ f | g ]
,
\end{equation}
which holds for any probability densities $g, h \in L_+^1(S^1)$.

Another important tool is the \emph{Log-Sobolev inequality} \cite{go09}, 
which we need in the following form: given $h \in L^1(S^1)$ such that
\begin{equation*}
\int_{S^1}
h
\cd x
=1,\quad 0 < C_1 \le h(x) \le C_2 \ \text{ for a. a. } x \in S^1
,
\end{equation*}
there exists $b = b(C_1, C_2) > 0$ such that
\begin{equation}
\label{eq:log-sobolev}
\int_{S^1}
g \ln \frac gh
\cd x
\le
b
\int_{S^1}
g
\left|
\left(
\ln \frac gh
\right)_x
\right|^2
\cd x
\end{equation}
for any probability density $g \in L_+^1(S^1)$ such that the derivative on the 
right-hand side exists almost everywhere.  Observe that both sides 
in~\eqref{eq:log-sobolev} are nonnegative (possibly infinite) whenever the 
derivative of the logarithm makes sense.

The integral
\begin{equation*}
\ep [ g | h ]
=
\int_{S^1}
g
\left|
\left(
\ln \frac gh
\right)_x
\right|^2
\cd x
\end{equation*}
on the right-hand side of~\eqref{eq:log-sobolev} is called the \emph{entropy 
production term} of $g$ and $h$.  Thus~\eqref{eq:log-sobolev} can be 
equivalently expressed as
\begin{equation}
\label{eq:varlog-sobolev}
\E [g | h]
\le
b
\ep [ g | h]
.
\end{equation}

The following theorem sets the framework for our investigation of transport.  
It establishes the existence of a unique time-periodic solution 
of~\eqref{eq:per-pr}, which attracts other solutions and thus is our natural 
object of study (cf.~\cite{dkk04, bdk09, bdik07}).

\begin{theorem}
\label{th:biper-sol}
Suppose that $F\colon S^1 \times \R_+$ is $T$-periodic in the second argument 
and there exists a partition $0 = t_0 < t_1 < \dots < t_n = T$ of the segment 
$[0,T]$ such that $F$ is $C^3$ on every segment $[t_{i-1}, t_{i}]$.  Then 
there exists a unique positive $T$-periodic in $t$ solution $g_\infty$ 
of~\eqref{eq:per-pr} satisfying~\eqref{eq:per-mass-conservation}.  Moreover, 
if $g$ solves~\eqref{eq:per-pr}--\eqref{eq:per-pr-ic} with the initial 
condition satisfying $\int_{S_1} g_0 \ln g_0 \cd x < \infty$, then $g_\infty$ 
attracts $g$ in the sense of the entropy
\begin{equation}
\label{eq:biper-entropy-attraction}
\E[ g(\cdot, t) | g_\infty(\cdot, t) ]
\le
\E[ g(\cdot, 0) | g_\infty(\cdot, 0) ]
\e^{- \gamma t},
\ t \ge 0,
\end{equation}
where $\gamma = \sigma/b$ and $b = b(\min g_\infty, \max g_\infty)$ is the 
uniform Log-Sobolev constant for $g_\infty$.
\end{theorem}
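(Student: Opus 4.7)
The plan is to prove existence by a Schauder fixed-point argument for the period map, then to derive the attraction in entropy via a relative-entropy dissipation identity combined with the Log-Sobolev inequality \eqref{eq:log-sobolev}, and finally to deduce uniqueness from this attraction.

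First I would set up the Cauchy problem for \eqref{eq:per-pr} with arbitrary initial datum $g_0 \in L^1_+(S^1)$ of mass one. Classical parabolic theory, applied on each smooth subinterval $(t_{i-1}, t_i)$ and concatenated across the finitely many jump times of $F$, produces a solution operator $S(t,s)$ that preserves nonnegativity and mass and, by parabolic smoothing, takes $L^1$ data into $C^{2+\alpha}(S^1)$ after any positive time. The period map $P = S(T,0)$ therefore maps the convex set $\mathcal{K}$ of probability densities on $S^1$ continuously into a precompact subset of itself, and Schauder's theorem yields a fixed point $g_\infty(\cdot,0)$, whose evolution is a $T$-periodic solution $g_\infty$. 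The strong maximum principle, applied on each smooth subinterval, ensures $g_\infty > 0$ everywhere; continuity on the compact set $S^1 \times [0,T]$ then gives uniform bounds $0 < m \le g_\infty \le M < \infty$.

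Second, I would establish the entropy-dissipation identity
\[
\frac{d}{dt}\E[g(\cdot,t)|g_\infty(\cdot,t)] = -\sigma\,\ep[g(\cdot,t)|g_\infty(\cdot,t)]
\]
on each interval $(t_{i-1}, t_i)$ where $F$ is smooth. The key manipulation is to rewrite the flux $J = \sigma g_x - Fg$ as $J = \sigma g\,(\ln(g/g_\infty))_x + (g/g_\infty)J_\infty$; integration by parts on $S^1$ (no boundary terms) then cancels exactly the contributions coming from $F$ and from the flux of $g_\infty$, leaving $-\sigma\int_{S^1} g\,|(\ln(g/g_\infty))_x|^2 \cd x$. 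Continuity in $t$ of the relative entropy across the jump times $t_i$ lets us integrate this identity over $[0,\infty)$.

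Third, I apply the Log-Sobolev inequality in the form \eqref{eq:varlog-sobolev} with $h = g_\infty(\cdot, t)$: the uniform two-sided bound on $g_\infty$ yields a uniform constant $b = b(m, M)$, so $\E[g(\cdot,t)|g_\infty(\cdot,t)] \le b\,\ep[g(\cdot,t)|g_\infty(\cdot,t)]$, and combining with the dissipation identity gives $\tfrac{d}{dt}\E \le -(\sigma/b)\E$, whence \eqref{eq:biper-entropy-attraction} by Gr\"onwall. Uniqueness follows at once: for any other positive $T$-periodic solution $\tilde g_\infty$, the left-hand side of \eqref{eq:biper-entropy-attraction} is itself $T$-periodic in $t$, so the exponential decay forces it to vanish identically, and \eqref{eq:cs-k} yields $\tilde g_\infty = g_\infty$. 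The main obstacle is the existence step: one must verify that the parabolic smoothing estimates used to make $P$ compact are uniform across the finitely many jumps of $F$ and stable under $L^1$-perturbations of the initial data; once that is in place, all remaining steps are essentially the framework already laid out in \eqref{eq:cs-k}--\eqref{eq:varlog-sobolev}.
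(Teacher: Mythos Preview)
Your proposal is correct and follows essentially the same route as the paper: Schauder for the period map gives existence, the strong maximum principle gives uniform two-sided bounds on $g_\infty$, a direct computation yields $\frac{d}{dt}\E[g|g_\infty]=-\sigma\,\ep[g|g_\infty]$ on each smooth subinterval, and the uniform Log-Sobolev inequality plus Gr\"onwall gives \eqref{eq:biper-entropy-attraction}, from which uniqueness is immediate. The only cosmetic difference is that the paper first treats the globally smooth case and then remarks on the piecewise modification, and it derives the dissipation identity by writing $r=g/g_\infty$ and integrating by parts directly rather than via your flux decomposition, but the content is the same.
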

\begin{remark}
If $F$ has discontinuities at the points $t_i$, we do not require that 
\eqref{eq:per-pr} should hold at these points.  In this case we construct 
solutions piecewise: given the initial data at $t = t_0$, by parabolic 
regularity the solution is well defined at $t = t_1$, then $g(\cdot, t_1)$ is 
considered as the initial condition for the segment $[t_1, t_2]$, and so on.
\end{remark}
\begin{proof}[Proof of Theorem~\ref{th:biper-sol} ]
First suppose that $F$ is $C^3$ on $S^1 \times [0,T]$.
Let
$$X = \{ g \in L^1_+(S^1) \mid \int_{S^1} g(x) \cd x = 1\}.$$
Consider the operator $\mathcal T \colon X \to X$ that takes $g_0 \in X$ to 
$g(\cdot, T)$, where $g$ solves~\eqref{eq:per-pr}--\eqref{eq:per-pr-ic}.  The 
operator is well defined due to the conservation of mass and the maximum 
principle.  As $X$ is convex and bounded in $L^1(S^1)$, its image $\mathcal 
T(X)$ is precompact in $X$ by parabolic regularity.  Hence, by the Schauder 
theorem, $\mathcal T$ has a fixed point, which clearly is the initial 
condition for a $T$-periodic in time solution $g_\infty$ of~\eqref{eq:per-pr}.  
We have yet to prove that it is the only periodic solution.  By parabolic 
regularity, $g_\infty$ is continuous on $S^1 \times [0,T]$ and hence bounded, 
and by the strong maximum principle it is positive and thus bounded away from 
0.

Now suppose $g$ solves~\eqref{eq:per-pr} with the initial condition $g_0(x) 
\ge 0$ such that $\int_{S^1} g_0(x) \ln g_0(x) \cd x < \infty$.  We claim that 
the relative entropy $\E[ g(\cdot, t) | g_\infty(\cdot, t)] $ decreases. 
Indeed, letting $r = g/g_\infty$ we have
\begin{multline}
\label{eq:biper-sol-1}
\frac{\dd \E[g | g_\infty]}{\dd t}
=
\frac{\dd}{\dd t}
\int_{S^1}
g \ln \frac{g}{g_\infty}
\cd x
\\
=
\int_{S^1}
g_t
\ln \frac{g}{g_\infty}
\cd x
+
\int_{S^1}
g_t
\cd x
-
\int_{S^1}
\frac{
g (g_\infty)_t
}{
g_\infty
}
\cd x
\\
=
\int_{S^1}
(\sigma g_{xx} - (Fg)_x) \ln r
\cd x
-
\int_{S^1}
(g_\infty)_t r
\cd x
\\
=
\int_{S^1}
\frac{g}{r}
(\sigma r_{xx} + F r_x)
\cd x
-
\sigma
\int_{S^1}
g
\frac{r_x^2}{r^2}
\cd x
-
\int_{S^1}
(g_\infty)_t r
\cd x
\\
=
\int_{S^1}
g_\infty
(\sigma r_{xx} + F r_x)
\cd x
-
\sigma
\int_{S^1}
g
\frac{r_x^2}{r^2}
\cd x
-
\int_{S^1}
(g_\infty)_t r
\cd x
\\
=
\int_{S^1}
(\sigma (g_\infty)_{xx} - (Fg_\infty)_x - (g_\infty)_t)
r
\cd x
-
\sigma
\int_{S^1}
g
\frac{r_x^2}{r^2}
\cd x
\\
=
-
\sigma
\int_{S^1}
\left( \ln \frac{g}{g_\infty} \right)_x^2 g
\cd x
=
- \sigma \ep [ g | g_\infty]
.
\end{multline}

As $g_\infty$ is bounded away from 0, as well as from above for $(x, t) \in 
S^1 \times [0,T]$, we have the uniform in $t$ Log-Sobolev inequality
\begin{equation*}
\E [ g | g_\infty] \le b \ep [ g | g_\infty]
.
\end{equation*}
Hence
\begin{equation}
\label{eq:biper-sol-2}
\frac{\dd \E [ g | g_\infty]}{\dd t}
\le
- \frac \sigma b \E[ g | g_\infty]
\end{equation}
and~\eqref{eq:biper-entropy-attraction} follows.  Moreover, the 
attraction~\eqref{eq:biper-entropy-attraction} implies the uniqueness of the 
periodic solution.

In the general case of piecewise continuous $F$, the proof works with slight 
modifications.  We have $\mathcal T = \mathcal T_n \circ \dots \circ \mathcal 
T_1$, where $\mathcal T_i$ is the resolving operator for the segment 
$[t_{i-1}, t_i]$, which is compact, so we still can apply the Schauder theorem 
and obtain a fixed point and a periodic solution.  The relative entropy $\E[g 
| g_\infty]$ is continuous and the computation \eqref{eq:biper-sol-1} holds in 
each open interval $(t_{i-1}, t_i)$, so $\E[g | g_\infty]$ decreases.  
Inequality~\eqref{eq:biper-sol-2} also holds in each open interval $(t_{i-1}, 
t_i)$, whence \eqref{eq:biper-entropy-attraction} follows.  Indeed, for $t_1$ 
we have
\begin{equation*}
\E[g | g_\infty]|_{t = t_1} \le \E[g | g_\infty]|_{t = t_0} \e^{- \gamma t_1}
,
\end{equation*}
so for any $t \in (t_1, t_2)$ we have
\begin{equation*}
\E
\le \E|_{t = t_1} \e^{- \gamma (t - t_1)}
\le \E|_{t = 0} \e^{- \gamma t_1}
\e^{- \gamma (t - t_1)}
= \E|_{t = 0} \e^{- \gamma t}
,
\end{equation*}
and so on for the subsequent intervals.
\end{proof}

\begin{cor}
Under the hypothesis of Theorem~\ref{th:biper-sol}, $g_\infty$ also 
exponentially attracts $g$ in the sense of $L^1$:
\begin{equation}
\label{eq:biper-attraction}
\int_{S^1}
| g(x, t) - g_\infty(x, t) |\cd x \le C \e^{-\gamma t/2}
,
\ t \ge 0
\end{equation}
where $C$ depends on $g$ and $g_\infty$.
\end{cor}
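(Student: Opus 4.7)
The plan is to derive \eqref{eq:biper-attraction} as an essentially immediate combination of two ingredients already available in the excerpt: the Csisz\'ar--Kullback inequality \eqref{eq:cs-k} and the entropy decay estimate \eqref{eq:biper-entropy-attraction} from Theorem~\ref{th:biper-sol}. No new PDE estimates will be needed; the whole corollary is a one-line chain of inequalities packaged as a statement.

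First I would apply \eqref{eq:cs-k} to the pair of probability densities $g(\cdot,t)$ and $g_\infty(\cdot,t)$, both of which lie in $L^1_+(S^1)$ and have unit mass on $S^1$ by \eqref{eq:per-mass-conservation}. This gives
\begin{equation*}
\| g(\cdot,t) - g_\infty(\cdot,t) \|_{L^1(S^1)}^{2}
\le 2\,\E[\,g(\cdot,t)\,|\,g_\infty(\cdot,t)\,].
\end{equation*}
Next I would insert \eqref{eq:biper-entropy-attraction} from Theorem~\ref{th:biper-sol}, obtaining
\begin{equation*}
\| g(\cdot,t) - g_\infty(\cdot,t) \|_{L^1(S^1)}^{2}
\le 2\,\E[\,g(\cdot,0)\,|\,g_\infty(\cdot,0)\,]\,\e^{-\gamma t},
\end{equation*}
and then simply take the square root to get \eqref{eq:biper-attraction} with
\begin{equation*}
C = \sqrt{\,2\,\E[\,g_0\,|\,g_\infty(\cdot,0)\,]\,}.
\end{equation*}
The finiteness of $C$ is exactly the hypothesis $\int_{S^1} g_0 \ln g_0\cd x < \infty$ used in the theorem (together with the fact that $g_\infty(\cdot,0)$ is bounded above and away from zero, so $\int_{S^1} g_0 \ln g_\infty(\cdot,0)\cd x$ is finite as well, which makes the relative entropy of $g_0$ with respect to $g_\infty(\cdot,0)$ finite).

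There is essentially no obstacle here: the only thing to notice is that the hypothesis of Theorem~\ref{th:biper-sol} on the initial datum is precisely what guarantees that the constant $C$ is finite, so the corollary should be stated (or at least read) under the same hypothesis as the attraction estimate \eqref{eq:biper-entropy-attraction}. Since the problem truly is just Csisz\'ar--Kullback composed with the already-proved entropy decay, I would write the proof as a short displayed computation rather than a multi-step argument.
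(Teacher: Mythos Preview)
Your proposal is correct and matches the paper's own proof exactly: the paper simply says it suffices to combine \eqref{eq:biper-entropy-attraction} with the Csisz\'ar--Kullback inequality \eqref{eq:cs-k}, which is precisely your argument spelled out in detail.
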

\begin{proof}
It suffices to combine~\eqref{eq:biper-entropy-attraction} with the 
Csisz\'{a}r--Kullback inequality~\eqref{eq:cs-k}.
\end{proof}

Returning to problem \eqref{eq:general-ratchet}, put
\begin{equation}
\label{eq:def-asymptotic-velocity}
v_\infty
=
- \frac 1T
\int_0^T
\int_{0}^{1}
\Psi_x(x, t) g_\infty(x, t)
\cd x
\cd t
,
\end{equation}
where $g_\infty$ is the periodic solution of \eqref{eq:per-pr} with $F = - 
\Psi_x$ satisfying~\eqref{eq:per-mass-conservation}.  Combining 
\eqref{eq:av-vel} and \eqref{eq:biper-attraction}, we easily obtain the next 
result.
\begin{cor}
\label{cor:asymptotic-velocity}
Suppose that $\Psi(x,t)$ is $C^4$-smooth in $x$, and $F = - \Psi_x$ satisfies 
the hypothesis of Theorem~\ref{th:biper-sol}.  Then for any solution $\rho$ of 
\eqref{eq:general-ratchet} with the initial condition $\rho_0$ satisfying 
\eqref{eq:ic-requirements} we have
\begin{equation}
\label{eq:asymptotic-velocity}
| v_{[t_0, t_0 + T]} - v_\infty | \le C \e^{-\gamma t_0/2}
,
\end{equation}
where $\gamma > 0$ is the same as in Theorem~\ref{th:biper-sol} and depends 
only on $g_\infty$, and $C$ depends on $\rho$ and $g_\infty$.
\end{cor}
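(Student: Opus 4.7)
The plan is to combine the two ingredients handed to us: the explicit formula \eqref{eq:av-vel} for $v_{[t_0,t_0+T]}$ in terms of the periodized density $g(x,t)=\sum_k \rho(x+k,t)$, and the $L^1$ exponential attraction \eqref{eq:biper-attraction} of $g$ towards $g_\infty$.

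First I would observe that $g$ defined by \eqref{eq:rho-g} is a solution of \eqref{eq:per-pr}--\eqref{eq:per-pr-ic} with $F=-\Psi_x$ and initial datum $g_0(x)=\sum_k \rho_0(x+k)$, which is a probability density on $S^1$ by the conservation of mass and \eqref{eq:ic-requirements}. Since both $\Psi_x(\cdot,t)g_\infty(\cdot,t)$ and the integration cell $[0,1]$ are $1$-periodic in $x$, and the full integrand is $T$-periodic in $t$, the definition \eqref{eq:def-asymptotic-velocity} can equally well be integrated over any time window of length $T$. Combining with \eqref{eq:av-vel} then gives
\begin{equation*}
v_{[t_0,t_0+T]}-v_\infty
=-\frac{1}{T}\int_{t_0}^{t_0+T}\int_0^1 \Psi_x(x,t)\bigl(g(x,t)-g_\infty(x,t)\bigr)\cd x\cd t,
\end{equation*}
and the scheme of the estimate is then clear: pull $\Psi_x$ out in sup norm, control the space integral by \eqref{eq:biper-attraction}, and integrate the resulting exponential in time.

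The sup bound on $\Psi_x$ is automatic from the $C^4$ smoothness in $x$, the $1$-periodicity of $\Psi_x$ in $x$, and the piecewise-$C^3$ $T$-periodic dependence of $F=-\Psi_x$ on $t$ prescribed by the hypothesis of Theorem~\ref{th:biper-sol}. Hence
\begin{equation*}
\bigl|v_{[t_0,t_0+T]}-v_\infty\bigr|
\le\frac{\|\Psi_x\|_\infty}{T}\int_{t_0}^{t_0+T}\int_{S^1}|g(x,t)-g_\infty(x,t)|\cd x\cd t,
\end{equation*}
and applying \eqref{eq:biper-attraction} bounds the right-hand side by $\|\Psi_x\|_\infty\,\frac{C}{T}\int_{t_0}^{t_0+T}\e^{-\gamma t/2}\cd t\le C'\e^{-\gamma t_0/2}$, which is the claim \eqref{eq:asymptotic-velocity}.

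The only point that needs a short argument is the hypothesis $\int_{S^1}g_0\ln g_0\cd x<\infty$ required to invoke Theorem~\ref{th:biper-sol} and thus \eqref{eq:biper-attraction}: the periodized initial datum $g_0=\sum_k\rho_0(\cdot+k)$ is only known to be an $L^1_+$ probability density and its entropy need not be finite. This is the main obstacle, and I would dispose of it by a standard parabolic regularization argument: since $F=-\Psi_x$ is smooth in $x$ and the equation is uniformly parabolic on $S^1$, for any $\tau>0$ the function $g(\cdot,\tau)$ is smooth, strictly positive and bounded, so that $\int_{S^1}g(x,\tau)\ln g(x,\tau)\cd x<\infty$. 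Applying Theorem~\ref{th:biper-sol} to $g$ with initial time shifted to $\tau$ (and using the $T$-periodicity of $g_\infty$) yields \eqref{eq:biper-attraction} for all $t\ge\tau$, with a constant $C$ depending on $\rho$ and $g_\infty$; this shifted estimate is absorbed into the $C$ appearing in \eqref{eq:asymptotic-velocity}, and the proof is complete.
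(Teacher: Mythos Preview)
Your proof is correct and follows exactly the route the paper indicates (``Combining \eqref{eq:av-vel} and \eqref{eq:biper-attraction}, we easily obtain the next result''): subtract the two integral representations, pull out $\|\Psi_x\|_\infty$, and apply the $L^1$ decay. Your additional regularization step to secure the finite-entropy hypothesis of Theorem~\ref{th:biper-sol} is a genuine detail the paper glosses over, and your treatment of it is sound.
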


We conclude that under the hypothesis of 
Corollary~\ref{cor:asymptotic-velocity} the limiting average drift velocity is 
the same for all solutions of~\eqref{eq:general-ratchet} and is determined by 
the periodic equation~\eqref{eq:per-pr}.  For this reason in what follows we 
mostly concentrate on equation~\eqref{eq:per-pr}.

\subsection{Tilting and tilted ratchets}

An important class of unbiased potentials are the \emph{tilting potentials}, 
which have the form
\begin{equation}
\label{eq:general-tilt}
\Psi(x, t) = \psi(x) + H(t) x
,
\end{equation}
where the base potential $\psi(x)$ is $1$-periodic in $x$, and $H(t)$ is 
$T$-periodic in $t$ and characterized by the property
\begin{equation}
\label{eq:unbiasedH}
\int_0^T H(t) \cd t = 0
.
\end{equation}
Given the periodicity of $\psi$, equation~\eqref{eq:unbiasedH} is equivalent 
to \eqref{eq:unbiasedPsi}.

We obtain a typical tilting potential by letting
\begin{equation}
\label{eq:omegah}
H(t)=h(t)\omega
,
\end{equation}
where $\omega \in \R$ characterizes the swing of the tilt, and
\begin{equation}
\label{eq:switching-h}
h(t)=
\begin{cases}
1 & \text{ if } kT < t \le (k+1/2)T, \\
-1 & \text{ if } (k + 1/2)T < t \le (k+1)T,
\end{cases}
(k = 0, 1, \dots).
\end{equation}
The tilting potential corresponding to~\eqref{eq:switching-h} periodically 
switches between the tilted potentials $\psi(x) \pm \omega x$.  Thus the 
\emph{tilted potential}
\begin{equation}
\label{eq:tilted-potential}
\psi(x) + \omega x
\end{equation}
is a useful example of an obviously `biased' potential (here $\psi$ satisfies 
the same conditions as above and $\omega$ is a number).

\begin{remark}
\label{rem:tilting-av-vel}
Observe that if $\Psi$ is a tilting ratchet potential 
\eqref{eq:general-tilt}, then
\begin{equation}
\label{eq:tilting-av-vel}
v_{[t_0, t_0+T]}
=
- \frac 1T
\int_{t_0}^{t_0+T}
\int_0^1
\psi_x (x) g (x, t)
\,\dd x
\,\dd t
.
\end{equation}
To prove this, substitute \eqref{eq:general-tilt} in \eqref{eq:av-vel} and use 
the conservation of mass \eqref{eq:per-mass-conservation} and the zero mean 
condition \eqref{eq:unbiasedH}.  Consequently,
\begin{equation}
\label{eq:tilting-asymptotic-velocity}
v_\infty
=
- \frac 1T \int_0^T
\int_{0}^{1}
\psi_x(x, t) g_\infty(x, t)
\cd x
\cd t
.
\end{equation}
\end{remark}

It is useful to consider the stationary periodic equation
\begin{equation}
\label{eq:st-per}
\sigma g_{xx} + ((\psi_x + \omega) g)_x = 0,\quad  (t, x) \in (0, \infty) 
\times S^1,
\end{equation}
where $\psi_x$ is the derivative of a $C^4$ $1$-periodic function $\psi$.  This 
equation can be treated by elementary methods.  Put
\begin{gather*}
\alpha = \alpha(\omega) = \e^\omega - 1
; \\
\beta_+ = \beta_+(\omega, \psi) = \int_0^1 \e^{\omega x + \psi(x)} \, \dd x
;\\
\beta_- = \beta_-(\omega, \psi) = \int_0^1 \e^{-\omega x - \psi(x)} \, \dd x
;\\
\beta = \beta(\omega, \psi) =
\int_0^1
\int_0^x
\e^{\omega y + \psi(y) -\omega x - \psi(x)}
\,\dd y\,\dd x
,
\end{gather*}
and define
\begin{equation}
\label{eq:defAB}
A(\omega, \psi) =
\frac{
\alpha
}{
\alpha \beta + \beta_+ \beta_-
}
, \qquad
B(\omega, \psi) =
\frac{
\beta_+
}{
\alpha \beta + \beta_+ \beta_-
}
.
\end{equation}

\begin{remark}
\label{rem:A}
Observe that $A(\omega, \psi) > 0$ for $\omega > 0$ and $A(0, \psi) = 0$.  A 
simple reflection argument
($\psi(x) \mapsto \psi(1-x)$, $\omega \mapsto - \omega$)
shows that $A(\omega, \psi) < 0$ for $\omega < 0$.  In particular, $A(\omega, 
\psi)$ exists for any $\omega$ and continuous $\psi$ (i.e.,\ the denominator 
does not vanish).  Consequently, $B(\omega, \psi)$ also exists.
\end{remark}

\begin{prop}
\label{pr:st-per}
Suppose $\psi \colon \R \to \R$ is continuously differentiable and $1$-periodic; 
then~\eqref{eq:st-per} has a unique solution $g^*$ such that
\begin{equation*}
g^*(x) \ge 0,\ \int_0^1 g^*(x) \cd x = 1.
\end{equation*}
Moreover, $g$ is given by
\begin{equation}
\label{eq:st-per-sol}
g^* =
\e^{- (\omega x + \psi(x))/\sigma}
\left(
B\left(\frac\omega\sigma, \frac\psi\sigma\right) +
A\left(\frac\omega\sigma, \frac\psi\sigma\right)
\int_0^x \e^{(\omega y + \psi(y))/\sigma}
\right)
.
\end{equation}
\end{prop}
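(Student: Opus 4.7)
The plan is to integrate the equation~\eqref{eq:st-per} once to reduce it to a first-order linear ODE, solve by an integrating factor, and then pin down the two free constants by imposing periodicity and unit-mass normalisation. Concretely, I would integrate~\eqref{eq:st-per} to obtain $\sigma g_x + (\psi_x + \omega) g = C$ for some constant $C \in \R$ (the stationary current), then multiply by the integrating factor $\sigma^{-1}\e^{(\omega x + \psi(x))/\sigma}$ to rewrite this as
\[
\frac{\dd}{\dd x}\left[\e^{(\omega x + \psi(x))/\sigma}\, g(x)\right] = \frac{C}{\sigma}\,\e^{(\omega x + \psi(x))/\sigma}.
\]
Integrating from $0$ to $x$ and writing $A := C/\sigma$ and $B := \e^{\psi(0)/\sigma} g(0)$ I immediately obtain the form
\[
g(x) = \e^{-(\omega x + \psi(x))/\sigma}\left(B + A\int_0^x \e^{(\omega y + \psi(y))/\sigma}\,\dd y\right),
\]
which matches~\eqref{eq:st-per-sol}. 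What remains is to identify $A$ and $B$ with $A(\omega/\sigma,\psi/\sigma)$ and $B(\omega/\sigma,\psi/\sigma)$ from~\eqref{eq:defAB}.

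Next I would impose two scalar conditions. Periodicity $g(1) = g(0)$ (with $\psi(1) = \psi(0)$) gives $B + A\,\beta_+(\omega/\sigma, \psi/\sigma) = \e^{\omega/\sigma}\,B$, i.e.\ $\alpha(\omega/\sigma)\,B = \beta_+(\omega/\sigma,\psi/\sigma)\,A$. Unit mass $\int_0^1 g\,\dd x = 1$ gives, after recognising the iterated integral as $\beta(\omega/\sigma,\psi/\sigma)$, the identity $\beta_-(\omega/\sigma,\psi/\sigma)\,B + \beta(\omega/\sigma,\psi/\sigma)\,A = 1$. This is a $2\times 2$ linear system whose determinant is $\alpha\beta + \beta_+\beta_-$; Cramer's rule yields exactly the formulas~\eqref{eq:defAB}. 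Remark~\ref{rem:A} guarantees non-vanishing of the determinant, so the system is uniquely solvable.

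For uniqueness as a periodic solution of~\eqref{eq:st-per}, note that any such $g$ must satisfy $\sigma g_x + (\psi_x + \omega)g = C$ for some constant $C$ (the integrated form), and hence must fall in the two-parameter family above; the two conditions then fix it. Non-negativity follows from the explicit representation by a brief sign check: for $\omega > 0$ both $A$ and $B$ are positive, so $g > 0$ on $[0,1]$; for $\omega = 0$ we get $A = 0$ and $B = 1/\beta_- > 0$, hence $g > 0$; for $\omega < 0$ one has $A < 0$ (by Remark~\ref{rem:A}) and $B > 0$, but the periodicity relation forces the bracket $B + A\int_0^x \e^{(\omega y + \psi(y))/\sigma}\,\dd y$ to decrease monotonically from $B$ at $x=0$ to $\e^{\omega/\sigma} B > 0$ at $x=1$, and so stays positive throughout.

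The proof is essentially a routine ODE computation, so there is no genuine obstacle; the only point requiring a little care is the case analysis for non-negativity when $\omega < 0$, where $A$ and $B$ have opposite signs and one must invoke the periodicity identity $B + A\beta_+ = \e^{\omega/\sigma} B$ to conclude. The bookkeeping of the rescaled arguments $\omega/\sigma$ and $\psi/\sigma$ inside $\alpha$, $\beta_\pm$, $\beta$ is purely notational.
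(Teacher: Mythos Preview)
Your proposal is correct and is exactly the ``straightforward'' computation the paper alludes to (the paper gives no details beyond citing~\cite{bdk08}). Your treatment is in fact more complete than the paper's, since you spell out the $2\times 2$ linear system for $A$ and $B$ and give the case analysis for positivity when $\omega<0$.
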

\begin{proof}
The proof is straightforward, cf.~\cite{bdk08}.
\end{proof}

Observe that $g^*$ is the normalized time-periodic solution 
of~\eqref{eq:per-pr} with $F = - (\psi_x + \omega)$.  Applying 
Theorem~\ref{th:biper-sol}, we immediately obtain the following corollary.
\begin{cor}
\label{cor:stable-attraction}
If $g$ solves
\begin{equation*}
\left\{
\begin{array}{l}
g_t - \sigma g_{xx} - ((\psi_x + \omega)g)_x = 0, \quad x \in S^1,\ t > 0, \\
g(x, 0) = g_0(x), \quad x \in S^1,
\end{array}
\right.
,
\end{equation*}
where $g_0 \in L^1(S^1)$ is as in \eqref{eq:per-pr-ic}, and $ g_0 \ln g_0 \in L^1$, then
\begin{equation}
\label{eq:stable-attraction}
\E[ g( \cdot, t) | g^* ] \le \E[g(\cdot, 0) | g^*]\e^{-\gamma t}
,
\quad t \ge 0,
\end{equation}
where $\gamma$ only depends on the lower and upper bounds of $g^*$ and on 
$\sigma$.
\end{cor}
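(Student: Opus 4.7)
The plan is to apply Theorem~\ref{th:biper-sol} directly, taking $F(x,t) := -(\psi_x(x) + \omega)$. Since this $F$ is autonomous, it is trivially $T$-periodic in $t$ for every $T > 0$, and (assuming enough regularity of $\psi$ so that $F \in C^3$ on $S^1 \times [0,T]$) the hypothesis of Theorem~\ref{th:biper-sol} is fulfilled on every period window. The mass-conserving positive $T$-periodic solution produced by that theorem will then be the object to identify with $g^*$.

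The next step is precisely that identification. The function $g^*$ furnished by Proposition~\ref{pr:st-per} is a stationary solution of~\eqref{eq:per-pr} with this choice of $F$, hence $T$-periodic in $t$, and it has unit mass on $S^1$. From the explicit formula~\eqref{eq:st-per-sol} together with Remark~\ref{rem:A}, $g^*$ is continuous and strictly positive on $S^1$, so it is bounded above and bounded away from zero. The uniqueness part of Theorem~\ref{th:biper-sol} then forces $g_\infty = g^*$, and \eqref{eq:stable-attraction} becomes the autonomous specialization of~\eqref{eq:biper-entropy-attraction}, with $\gamma = \sigma/b$ and $b = b(\min g^*, \max g^*)$ the uniform Log-Sobolev constant of $g^*$. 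There is no need to redo the entropy-production calculation~\eqref{eq:biper-sol-1}; in the autonomous case the term $\int_{S^1}(g_\infty)_t r \cd x$ just vanishes and the same chain of equalities yields $\dd \E[g|g^*]/\dd t = -\sigma\, \ep[g|g^*]$.

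The only real obstacle is a mild regularity mismatch: Theorem~\ref{th:biper-sol} is stated for $F$ that is piecewise $C^3$, whereas Proposition~\ref{pr:st-per} only requires $\psi \in C^1$. I expect this to be handled either by assuming the additional smoothness of $\psi$ tacitly (as the corollary will be used in contexts where $\psi$ is smooth) or by a density argument: approximate $\psi$ by smooth $1$-periodic potentials $\psi_n$, apply the estimate to each, and pass to the limit using that $\min g^*$ and $\max g^*$, and hence $b$ and $\gamma$, depend continuously on $\psi$ in $C^1$ through the explicit formula~\eqref{eq:st-per-sol}.
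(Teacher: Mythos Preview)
Your proposal is correct and matches the paper's approach exactly: the paper simply states that the corollary follows by ``applying Theorem~\ref{th:biper-sol}'' to the autonomous $F = -(\psi_x + \omega)$ and identifying the unique periodic solution with~$g^*$. Your discussion of the regularity mismatch is more careful than the paper itself, which tacitly works under the standing assumption $\psi \in C^4$ used elsewhere (e.g.\ Theorem~\ref{th:adiabatic-limit}, Proposition~\ref{pr:semiad-bounds}).
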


\begin{remark}
\label{rem:tilted-velocity}
The normalized periodic solution $g^*$ of \eqref{eq:st-per} satisfies
\begin{equation*}
\sigma g^*_x + (\psi_x + \omega) g^* = \sigma A
\end{equation*}
with $A = A(\omega/\sigma, \psi/\sigma)$.  Integrating, we get
\begin{equation}
\label{eq:A}
\sigma A = \int_0^1 (\psi_x + \omega) g^* \cd x
.
\end{equation}
Incidentally, we see that
\begin{equation}
\label{eq:tilted-velocity}
v_\infty = - \int_0^1 (\psi_x + \omega) g^* \cd x = -\sigma  A
,
\end{equation}
where $v_\infty$ is the asymptotic drift velocity for the tilted potential 
$\psi + \omega x$ (with arbitrary $T > 0$).
\end{remark}

\section {Adiabatic limit for tilting ratchets}
\label{altr}

\subsection{Asymptotic speed}

In this section we consider the tilting ratchet given by
\begin{equation} \label{eq:adiab-tilting}
\left\{
\begin{array}{l}
\rho_t - \rho_{xx} - ((\psi_x + h(t) \omega) \rho)_x = 0, \quad x \in \R,\ t > 
0, \\
\rho = \rho_0(x), \quad x \in \R
;
\quad
\rho_0(x) \ge 0, \ \int_{-\infty}^{\infty} \rho_0 (x) \cd x = 1,
\end{array}
\right.
\end{equation}
where $h$ is defined by \eqref{eq:switching-h} and for convenience $\sigma = 
1$.  We focus on the \emph{adiabatic limit} of~\eqref{eq:adiab-tilting}, i.e., 
on its behaviour when $T$, the period of the tilting, is large.  In this case 
we allow the diffusion to fully take its effect.  Thus the transport in the 
adiabatic limit can be said to be driven by diffusion.  Generally, the two 
tilted potentials $\psi(x) \pm \omega x$ corresponding to 
problem~\eqref{eq:adiab-tilting} are not symmetric and produce drift 
velocities of different absolute values.  For this reason the limiting average 
drift velocity of the tilting ratchet is nonzero.

The following theorem gives an effective formula for the adiabatic drift 
velocity.  Before we state it, we introduce some notations.  Consider the 
corresponding periodic problem
\begin{equation}
\label{eq:adiab-periodic}
g_t - g_{xx} - ((\psi_x + h(t) \omega) g)_x = 0 \quad x \in S^1, t > 0
; \quad \int_{S^1} g \cd x = 1
.
\end{equation}
It switches between the modes
\begin{equation} \label{eq:adiab-tilted-1}
g_t - g_{xx} - ((\psi_x + \omega) g)_x = 0 \quad x \in S^1
; \quad \int_{S^1} g \cd x = 1
\end{equation}
and
\begin{equation} \label{eq:adiab-tilted-2}
g_t - g_{xx} - ((\psi_x - \omega) g)_x = 0 \quad x \in S^1
; \quad \int_{S^1} g \cd x = 1
,
\end{equation}
spending a long time in each of them.

Let $g_+$ and $g_-$ be the stationary solutions of \eqref{eq:adiab-tilted-1} 
and \eqref{eq:adiab-tilted-2} respectively.  By $\gamma_+$ and $\gamma_-$ 
denote the inverses of the Log-Sobolev constants (see 
\eqref{eq:varlog-sobolev}) for the relative entropies
\begin{equation*}
\re_+[g] = \E[g|g_+], \ \re_-[g] = \E[g|g_-]
,
\end{equation*}
so by Corollary~\ref{cor:stable-attraction} for solutions 
of~\eqref{eq:adiab-tilted-1} we have the entropy decay
\begin{equation}
\label{eq:adiab-tilted-attr-1}
\re_+[g(\cdot, t)] \le \re_+[g(\cdot, t_0)] \e^{-\gamma_+ (t - t_0)} \quad (t 
\ge t_0)
,
\end{equation}
and similarly for solutions of~\eqref{eq:adiab-tilted-2} we have
\begin{equation}
\label{eq:adiab-tilted-attr-2}
\re_-[g(\cdot, t)] \le \re_-[g(\cdot, t_0)] \e^{-\gamma_- (t - t_0)} \quad (t 
\ge t_0)
.
\end{equation}

Let $g_\infty$ be the time-periodic solution of~\eqref{eq:adiab-periodic} with 
period $T$ and let $A(\omega) = A(\omega, \psi)$ and $A(-\omega) = A(-\omega, 
\psi)$ be defined according to \eqref{eq:defAB}.  As $g_\infty$ is 
time-periodic, the asymptotic drift velocity of \eqref{eq:adiab-tilting} can 
be expressed as
\begin{equation}
\label{eq:adiab-velocity}
v_{\infty}(T) =
- \frac 1T \int_0^T
\int_{0}^{1}
\psi_x(x) g_\infty(x, t)
\cd x
\cd t
\end{equation}
(see~\eqref{eq:tilting-asymptotic-velocity}).  Finally, put
\begin{equation}
\label{eq:adiab-oooo}
v_{\infty\infty} = - \frac {A(\omega) + A(-\omega)}2
= - \frac 12 \int_0^1 \psi_x(x) (g_+(x) + g_-(x)) \cd x
,
\end{equation}
where the last equality is due to \eqref{eq:A}.

\begin{theorem}
\label{th:adiabatic-limit}
Suppose $\psi \in C^4(\R)$ is $1$-periodic; then
\begin{equation}
\label{eq:adiabatic-limit}
|v_\infty(T) - v_{\infty\infty}|
\le
\frac {2^{3/2}}T
\max | \psi_x(x) |
\left(
\frac{\sqrt{\re_+[g_-]}}{\gamma_+}
+
\frac{\sqrt{\re_-[g_+]}}{\gamma_-}
\right)
+ o\left(\frac1T\right)
.
\end{equation}
In particular, if $v_{\infty\infty} \ne 0$, for large $T$ there is nonzero 
unidirectional transport.
\end{theorem}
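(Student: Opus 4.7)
The plan is to exploit that, for large $T$, the bi-periodic solution $g_\infty$ spends most of the $+\omega$-interval $(0, T/2]$ close to $g_+$ and most of the $-\omega$-interval $(T/2, T]$ close to $g_-$, so that the time-average of $\psi_x g_\infty$ in \eqref{eq:adiab-velocity} is essentially the average of $\psi_x\cdot\tfrac{1}{2}(g_+ + g_-)$. Concretely, I would split the time integral and write
\begin{equation*}
v_\infty(T) - v_{\infty\infty}
= - \frac{1}{T}\int_0^{T/2}\int_0^1 \psi_x (g_\infty - g_+)\cd x \cd t
  - \frac{1}{T}\int_{T/2}^T\int_0^1 \psi_x (g_\infty - g_-)\cd x \cd t,
\end{equation*}
and bound the inner integrals in absolute value by $\max|\psi_x|\,\|g_\infty(\cdot,t)-g_{\pm}\|_{L^1(S^1)}$. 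The Csisz\'ar--Kullback inequality \eqref{eq:cs-k} then upgrades this to $\max|\psi_x|\sqrt{2\,\re_{\pm}[g_\infty(\cdot,t)]}$.

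On $(0,T/2]$ the function $g_\infty$ solves \eqref{eq:adiab-tilted-1}, so \eqref{eq:adiab-tilted-attr-1} gives $\re_+[g_\infty(\cdot,t)] \le \re_+[g_\infty(\cdot,0)]\,\e^{-\gamma_+ t}$; integrating the square root and using $\int_0^{T/2}\e^{-\gamma_+ t/2}\cd t\le 2/\gamma_+$ bounds the first term by $\frac{2^{3/2}\max|\psi_x|}{T\gamma_+}\sqrt{\re_+[g_\infty(\cdot,0)]}$. A symmetric argument on $(T/2,T]$ using \eqref{eq:adiab-tilted-attr-2} produces the analogous bound $\frac{2^{3/2}\max|\psi_x|}{T\gamma_-}\sqrt{\re_-[g_\infty(\cdot,T/2)]}$ on the second term.

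Thus the proof reduces to showing
\begin{equation*}
\re_+[g_\infty(\cdot,0)] \to \re_+[g_-]
\quad\text{and}\quad
\re_-[g_\infty(\cdot,T/2)] \to \re_-[g_+]
\qquad\text{as } T\to\infty,
\end{equation*}
which will contribute the $o(1/T)$ correction. This is the main obstacle. By $T$-periodicity $g_\infty(\cdot,0) = g_\infty(\cdot,T)$, so \eqref{eq:adiab-tilted-attr-2} applied on $(T/2,T]$ yields $\re_-[g_\infty(\cdot,0)] \le \re_-[g_\infty(\cdot,T/2)]\,\e^{-\gamma_- T/2}$, and similarly $\re_+[g_\infty(\cdot,T/2)] \le \re_+[g_\infty(\cdot,0)]\,\e^{-\gamma_+ T/2}$; both relative entropies therefore tend to $0$, and \eqref{eq:cs-k} upgrades this to $L^1$-convergence of the switching-time snapshots to $g_-$ and $g_+$ respectively. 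To promote this $L^1$-convergence to convergence of the \emph{opposite} entropies $\re_+$ and $\re_-$ one needs uniform-in-$T$ pointwise bounds on $g_\infty$ (from above and away from $0$) near the switching times; this should follow from parabolic regularity applied on a window of fixed length preceding each switching time, giving uniform $C^0$ bounds that let one pass to the limit in the integrand $g_\infty\ln(g_\infty/g_\pm)$ by dominated convergence.

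Collecting everything produces \eqref{eq:adiabatic-limit}; the unidirectional transport conclusion is immediate, since $v_{\infty\infty}\ne 0$ forces $v_\infty(T)\ne 0$ for all sufficiently large $T$.
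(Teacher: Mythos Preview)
Your decomposition of $v_\infty(T)-v_{\infty\infty}$, the application of Csisz\'ar--Kullback, and the use of the entropy decays \eqref{eq:adiab-tilted-attr-1}--\eqref{eq:adiab-tilted-attr-2} match the paper's argument exactly; both proofs arrive at the bound
\[
|v_\infty(T)-v_{\infty\infty}|\le \frac{2^{3/2}}{T}\max|\psi_x|\left(\frac{\sqrt{\re_+[g_\infty(\cdot,0)]}}{\gamma_+}+\frac{\sqrt{\re_-[g_\infty(\cdot,T/2)]}}{\gamma_-}\right),
\]
so everything reduces to the two limits $\re_+[g_\infty(\cdot,0)]\to\re_+[g_-]$ and $\re_-[g_\infty(\cdot,T/2)]\to\re_-[g_+]$.

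For these limits the approaches diverge, and your version as written is circular: the step ``both relative entropies therefore tend to $0$'' needs $\re_-[g_\infty(\cdot,T/2)]$ and $\re_+[g_\infty(\cdot,0)]$ to be bounded uniformly in $T$, which is precisely what you are trying to prove. The fix is to invoke the uniform $C^0$ bounds you mention \emph{first}: parabolic smoothing on a window of fixed length preceding each switching time, fed only with $\|g_\infty(\cdot,t)\|_{L^1}=1$, gives $T$-independent upper bounds, and a Harnack estimate gives the lower bounds. With two-sided bounds in hand, all four entropies are uniformly bounded, the exponential decay forces $\re_+[g_\infty(\cdot,T/2)]\to0$ and $\re_-[g_\infty(\cdot,0)]\to0$, and then the opposite entropies converge because $x\mapsto x\ln x$ is Lipschitz on the relevant range. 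So your route works once reordered, but it imports external parabolic machinery.

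The paper stays entirely within the entropy framework. It first proves the elementary comparison (Lemma~\ref{lem:adiab-1})
\[
|\re_+[g]-\re_+[g_-]|\le \re_-[g]+M_+\sqrt{\re_-[g]},\qquad |\re_-[g]-\re_-[g_+]|\le \re_+[g]+M_-\sqrt{\re_+[g]},
\]
which follows from Csisz\'ar--Kullback alone, and then (Lemma~\ref{lem:adiab-2}) iterates these with the decay estimates to show that, for each $\varepsilon>0$ and all large $T$, the period map leaves the set $\{|\re_+[g_0]-\re_+[g_-]|\le\varepsilon\}$ invariant; Schauder's theorem then places $g_\infty(\cdot,0)$ inside it. This avoids any appeal to pointwise bounds or Harnack, at the cost of the small algebraic lemma relating the two entropies.
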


Before we prove Theorem~\ref{th:adiabatic-limit} we must obtain some auxiliary 
results.  Put
\begin{equation*}
M_+ = \max \left | \ln \frac{g_-}{g_+} \right | \sqrt 2,
\quad
M_- = \max \left | \ln \frac{g_+}{g_-} \right | \sqrt 2.
\end{equation*}
and consider the functions
\begin{equation*}
\phi_+(R) = R + M_+ \sqrt R,
\quad
\phi_-(R) = R + M_- \sqrt R
.
\end{equation*}
Observe that $\phi_+$ and $\phi_-$ are continuous and increasing on $\R_+$.

\begin{lemma}
\label{lem:adiab-1}
For any probability density $g \in L^1(S^1)$ such that $g \ln g \in L^1(S^1)$, 
we have
\begin{gather}
| \re_+ [g] - \re_+ [g_-] | \le \phi_+(\re_- [g]),
\label{eq:adiab-lem11}
\\
| \re_- [g] - \re_- [g_+] | \le \phi_-(\re_+ [g])
.
\label{eq:adiab-lem12}
\end{gather}
\end{lemma}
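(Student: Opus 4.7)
The plan is to prove each inequality by a direct algebraic manipulation of the relative entropies, combined with the Csisz\'{a}r--Kullback inequality already recalled in the paper. I will focus on \eqref{eq:adiab-lem11}; the estimate \eqref{eq:adiab-lem12} follows by the completely symmetric argument after swapping the roles of $g_+$ and $g_-$.

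The key observation is the identity
\begin{equation*}
\re_+[g] - \re_+[g_-]
= \int_{S^1} (g - g_-) \ln \frac{g_-}{g_+} \cd x + \re_-[g].
\end{equation*}
To verify it, I would just expand both relative entropies as integrals of $g \ln(g/g_+)$ and $g_- \ln(g_-/g_+)$, add and subtract $\int g \ln g_-$, and regroup so that the terms involving $\ln(g/g_-)$ assemble into $\re_-[g]$. Once this identity is in hand, the first term on the right is controlled by H\"older's inequality:
\begin{equation*}
\left| \int_{S^1} (g - g_-) \ln \frac{g_-}{g_+} \cd x \right|
\le \| g - g_- \|_{L^1(S^1)} \max \left| \ln \frac{g_-}{g_+} \right|.
\end{equation*}

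Applying the Csisz\'{a}r--Kullback inequality~\eqref{eq:cs-k} to the pair $(g, g_-)$ gives $\| g - g_- \|_{L^1(S^1)} \le \sqrt{2 \re_-[g]}$, so the first term is at most $M_+ \sqrt{\re_-[g]}$ by the definition of $M_+$. Combining this with the triangle inequality and the identity above yields
\begin{equation*}
| \re_+[g] - \re_+[g_-] | \le \re_-[g] + M_+ \sqrt{\re_-[g]} = \phi_+(\re_-[g]),
\end{equation*}
which is exactly \eqref{eq:adiab-lem11}.

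There is no real obstacle here; the only delicate points are to justify the finiteness of all the integrals (which follows from $g \ln g \in L^1(S^1)$ together with the uniform positive lower and upper bounds on $g_\pm$ guaranteed by Theorem~\ref{th:biper-sol}, ensuring $\ln(g_-/g_+)$ is bounded and each individual relative entropy is well defined) and to check that the constants $M_\pm$ have been set up precisely so as to absorb the factor $\sqrt 2$ coming from the Csisz\'{a}r--Kullback inequality. The second inequality \eqref{eq:adiab-lem12} is obtained by the identical argument with $+$ and $-$ interchanged throughout.
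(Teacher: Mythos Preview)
Your proposal is correct and follows essentially the same route as the paper: the same algebraic identity $\re_+[g]-\re_+[g_-]=\re_-[g]+\int_{S^1}(g-g_-)\ln(g_-/g_+)\cd x$, followed by the sup-norm bound on $\ln(g_-/g_+)$ and the Csisz\'{a}r--Kullback inequality, with~\eqref{eq:adiab-lem12} obtained by swapping $g_+$ and $g_-$.
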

\begin{proof}
A straightforward computation yields
\begin{multline*}
\re_+ [g] - \re_+ [g_-]
=
\int_{S^1}
g \ln \frac{g}{g_+}
\cd x
-
\int_{S^1}
g_- \ln \frac{g_-}{g_+}
\cd x
\\
=
\int_{S^1}
\left(
g \ln \frac{g}{g_-} + (g - g_-) \ln \frac{g_-}{g_+}
\right)
\dd x
,
\end{multline*}
whence
\begin{equation*}
| \re_+ [g] - \re_+ [g_-] |
\le
\re_-[g] + \frac{M_+}{\sqrt 2} \| g - g_- \|_{L^1(S^1)}
.
\end{equation*}
Now it remains to apply the Csiszár--Kullback inequality~\eqref{eq:cs-k} and 
obtain~\eqref{eq:adiab-lem11}.

Inequality~\eqref{eq:adiab-lem12} is proved by swapping $g_+$ and $g_-$.
\end{proof}

Note that the periodic solution $g_\infty$ of~\eqref{eq:adiab-periodic} 
implicitly depends on $T$, which is a parameter of tilting.
\begin{lemma}
\label{lem:adiab-2}
We have
\begin{equation}
\label{eq:adiab-lem2}
\lim_{T\to \infty} \re_+[g_\infty(\cdot, 0)] = \re_+[g_-]
;
\quad
\lim_{T\to \infty} \re_-[g_\infty(\cdot, T/2)] = \re_-[g_+]
.
\end{equation}
\end{lemma}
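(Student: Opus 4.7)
The plan is that as $T\to\infty$, the bi-periodic solution $g_\infty$ spends a long half-period in each tilted mode and therefore relaxes deeply toward the corresponding stationary attractor: $g_\infty(\cdot,T/2)$ becomes close to $g_+$ and, by periodicity, $g_\infty(\cdot,0)=g_\infty(\cdot,T)$ becomes close to $g_-$. Quantifying this closeness in the \emph{wrong} relative entropy via Lemma~\ref{lem:adiab-1} is exactly what the two statements in \eqref{eq:adiab-lem2} demand.

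Set $a_T:=\re_+[g_\infty(\cdot,0)]$ and $b_T:=\re_-[g_\infty(\cdot,T/2)]$; both are finite for every $T$ since Theorem~\ref{th:biper-sol} gives two-sided positive bounds on $g_\infty$. First I would apply the contraction estimates \eqref{eq:adiab-tilted-attr-1} and \eqref{eq:adiab-tilted-attr-2} on the two half-periods: $g_\infty$ solves \eqref{eq:adiab-tilted-1} on $(0,T/2]$ and \eqref{eq:adiab-tilted-2} on $(T/2,T]$, and $g_\infty(\cdot,T)=g_\infty(\cdot,0)$, so
\begin{equation*}
\re_+[g_\infty(\cdot,T/2)]\le a_T\,\e^{-\gamma_+ T/2},\qquad \re_-[g_\infty(\cdot,0)]\le b_T\,\e^{-\gamma_- T/2}.
\end{equation*}
Feeding these into Lemma~\ref{lem:adiab-1} at $t=0$ and at $t=T/2$ respectively yields the coupled bounds
\begin{equation*}
\bigl|a_T-\re_+[g_-]\bigr|\le\phi_+\bigl(b_T\,\e^{-\gamma_- T/2}\bigr),\qquad \bigl|b_T-\re_-[g_+]\bigr|\le\phi_-\bigl(a_T\,\e^{-\gamma_+ T/2}\bigr).
\end{equation*}

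The main obstacle is that these inequalities are self-referential in $a_T$ and $b_T$, so to conclude I must first extract an a priori bound on $a_T+b_T$ that is uniform in $T$. Exploiting the explicit form $\phi_\pm(R)=R+M_\pm\sqrt R$ and adding the two estimates gives
\begin{equation*}
a_T+b_T\le K+(a_T+b_T)\,\e^{-\delta T/2}+M\sqrt{2(a_T+b_T)}\,\e^{-\delta T/4},
\end{equation*}
with $K=\re_+[g_-]+\re_-[g_+]$, $M=M_++M_-$, and $\delta=\min(\gamma_+,\gamma_-)$. For $T$ large enough that $\e^{-\delta T/2}\le 1/2$, the linear self-referential term is absorbed into the left-hand side, and a routine quadratic bound in $\sqrt{a_T+b_T}$ produces $a_T+b_T\le C$ with $C$ independent of $T$. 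Plugging this uniform bound back into the coupled inequalities, the exponential factors drive the arguments of $\phi_\pm$ to zero, and continuity of $\phi_\pm$ at the origin (with $\phi_\pm(0)=0$) yields $a_T\to\re_+[g_-]$ and $b_T\to\re_-[g_+]$, which are precisely \eqref{eq:adiab-lem2}.
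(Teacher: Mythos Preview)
Your argument is correct, and it takes a genuinely different route from the paper. The paper does not work directly with $g_\infty$; instead, for given $\varepsilon>0$ it chooses $T$ large, shows via a chain of entropy estimates (using Lemma~\ref{lem:adiab-1} and the decay \eqref{eq:adiab-tilted-attr-1}--\eqref{eq:adiab-tilted-attr-2}) that the period map $\mathcal T$ carries the convex set $\{g_0:|\re_+[g_0]-\re_+[g_-]|\le\varepsilon\}$ into itself, and then invokes the Schauder fixed point theorem together with uniqueness of the periodic solution to conclude $g_\infty(\cdot,0)$ lies in that set. Your approach sidesteps the fixed-point machinery entirely: you exploit the periodicity $g_\infty(\cdot,T)=g_\infty(\cdot,0)$ to close the two half-period contractions into the coupled system for $a_T,b_T$, then use the explicit form $\phi_\pm(R)=R+M_\pm\sqrt R$ to extract a uniform bound via an elementary quadratic inequality. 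This is more direct and more self-contained; the paper's route, by contrast, uses only monotonicity and invertibility of $\phi_\pm$ (not their explicit shape) and yields, as a by-product, a constructive choice of $T_\varepsilon$ that feeds into the remark following Theorem~\ref{th:adiabatic-limit} about estimating the $o(1/T)$ term.
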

\begin{proof}
We only prove the first limit in~\eqref{eq:adiab-lem2}, as the proof of the 
second one is completely analogous.

Take an arbitrary $R>0$ and for any $\varepsilon > 0$ choose $T_\varepsilon > 
0$ in such a way that
\begin{gather*}
(\re_+ [g_-] + \varepsilon ) \e^{-\gamma_+T_\varepsilon/2} \le R,
\\
(\re_- [g_+] + \phi_-(R) ) \e^{-\gamma_- T_\varepsilon/2} \le 
\phi_+^{-1}(\varepsilon).
\end{gather*}
Now take $T > T_\varepsilon$ and let $\mathcal T$ be the resolving operator 
for~\eqref{eq:adiab-periodic} taking $g(\cdot, 0)$ to $g(\cdot, T)$.  We claim 
that $\mathcal T$ maps the set
\begin{equation*}
X = \{ g_0 \in W_2^1(S^1) \cap \{\text{probability densities}\}\mid | 
\re_+[g_0] - \re_+[g_-] | \le \varepsilon \}
\end{equation*}
into itself.  To prove this, consider a solution $g$ 
of~\eqref{eq:adiab-periodic} with the initial condition $g(\cdot, 0) = g_0 \in 
X$.  Thanks to the conservation of mass, it only remains to prove that
\begin{equation}
\label{eq:adiab-lem21}
| \re_+[g(\cdot, T)] - \re_+[g_-] | \le \varepsilon
.
\end{equation}
Indeed, making use of~\eqref{eq:adiab-lem11}, \eqref{eq:adiab-lem12}, and the 
attraction~\eqref{eq:adiab-tilted-attr-1} and~\eqref{eq:adiab-tilted-attr-2} 
we consequently obtain
\begin{gather*}
\re_+[g(\cdot, 0)] = \re_+ [g_0] \le \re_+ [g_-] + \varepsilon;
\\
\re_+[g(\cdot, T/2)]
\le \re_+[g_0] \e^{-\gamma_+ T/2}
\le ( \re_+ [g_-] + \varepsilon) \e^{-\gamma_+ T/2}
\le R;
\\
\re_-[g(\cdot, T/2)]
\le \re_-[g_+] + \phi_-(\re_+[g(\cdot, T/2)])
\le \re_-[g_+] + \phi_-(R);
\\
\re_-[g(\cdot, T)]
\le \re[g(\cdot, T/2)] \e^{-\gamma_- T/2}
\le (\re_-[g_+] + \phi_-(R)) \e^{-\gamma_- T/2}
\le \phi_+^{-1}(\varepsilon);
\\
| \re_+[g(\cdot, T)] - \re_+[g_-] |
\le \phi_+(\re_-[g(\cdot, T)])
\le \phi_+(\phi_+^{-1}(\varepsilon))
= \varepsilon
,
\end{gather*}
so~\eqref{eq:adiab-lem21} holds, and $X$ is invariant under $\mathcal T$.  
Moreover, $X$ is closed in $W_2^1(S^1)$, convex, bounded in $L^1(S^1)$, and by 
parabolic regularity $\mathcal T \colon X \to X$ is continuous and $\mathcal 
T(X)$ is precompact in $W_2^1(S^1)$.  By the Schauder fixed point theorem 
$\mathcal T$ has a fixed point in $X$, which is the initial data for a 
time-periodic solution of \eqref{eq:adiab-periodic}.  Due to uniqueness of 
such a periodic solution, this fixed point coincides with $g_\infty(\cdot, 
0)$.  This implies that $| \re_+[g_\infty(\cdot, 0)] - \re_+[g_-] | \le 
\varepsilon$ whenever $T \ge T_\varepsilon$, and the first limit 
in~\eqref{eq:adiab-lem2} is proved.
\end{proof}

\begin{proof}[Proof of Theorem~\ref{th:adiabatic-limit}]
Using \eqref{eq:adiab-velocity} and \eqref{eq:adiab-oooo}, we estimate the 
difference on the left-hand side of~\eqref{eq:adiabatic-limit} as follows:
\begin{multline*}
| v_\infty(T) - v_{\infty\infty} |
\\
=
\left|
- \frac 1T
\int_0^T
\int_0^1
\psi_x g_\infty
\cd x
\cd t
+
\frac 12
\int_0^1
\psi_x (g_+ + g_-)
\cd x
\right|
\\
=
\Bigg|
- \frac 1T
\int_0^{T/2}
\int_0^1
\psi_x g_\infty
\cd x
\cd t
- \frac 1T
\int_{T/2}^T
\int_0^1
\psi_x g_\infty
\cd x
\cd t
\\
+
\frac 1T
\int_0^{T/2}
\int_0^1
\psi_x g_+
\cd x
\cd t
+
\frac 1T
\int_{T/2}^T
\int_0^1
\psi_x g_-
\cd x
\cd t
\Bigg|
\\
\le
\frac 1T
\int_0^{T/2}
\int_0^1
|\psi_x (g_\infty - g_+) |
\cd x
\cd t
+
\frac 1T
\int_{T/2}^T
\int_0^1
|\psi_x (g_\infty - g_-) |
\cd x
\cd t
\\
\le
\frac 1T
\max_{x \in [0,1]} |\psi_x(x)|
\left(
\int_0^{T/2}
\| g_\infty - g_+ \|_{L^1(S^1)}
\cd t
+
\int_{T/2}^T
\| g_\infty - g_- \|_{L^1(S^1)}
\cd t
\right)
.
\end{multline*}
Applying the Csiszár--Kullback inequality, we obtain
\begin{multline*}
| v_\infty(T) - v_{\infty\infty} |
\\
\le
\frac {\sqrt 2}T
\max_{x \in [0,1]} |\psi_x(x)|
\left(
\int_0^{T/2}
\sqrt{\re_+[g_\infty]}
\cd t
+
\int_{T/2}^T
\sqrt{\re_-[g_\infty]}
\cd t
\right)
.
\end{multline*}
As $g_\infty(x, t)$ solves~\eqref{eq:adiab-tilted-1} for $t \in [0,T/2)$ and 
\eqref{eq:adiab-tilted-2} for $t \in [T/2,T)$, we can apply the entropy 
attraction \eqref{eq:adiab-tilted-attr-1} and \eqref{eq:adiab-tilted-attr-2} 
and obtain
\begin{multline*}
| v_\infty(T) - v_{\infty\infty} |
\\
\le
\frac {\sqrt 2}T
\max_{x \in [0,1]} |\psi_x(x)|
\bigg(
\sqrt{\re_+[g_\infty(\cdot, 0)]}
\int_0^{T/2}
\e^{-\gamma_+ t/2}
\cd t
\\
+
\sqrt{\re_-[g_\infty(\cdot, T/2)]}
\int_{T/2}^T
\e^{-\gamma_- (t-T/2)/2}
\cd t
\bigg)
\\
\le
\frac {2^{3/2}}T
\max_{x \in [0,1]} |\psi_x(x)|
\left(
\frac{1}{\gamma_+}
\sqrt{\re_+[g_\infty(\cdot, 0)]}
+
\frac{1}{\gamma_-}
\sqrt{\re_-[g_\infty(\cdot, T/2)]}
\right)
.
\end{multline*}
The last estimate and the limits~\eqref{eq:adiab-lem2} yield 
\eqref{eq:adiabatic-limit}.
\end{proof}
\begin{remark}
The proof of Lemma~\ref{lem:adiab-2} gives opportunity to estimate the term 
$o(1/T)$ on the right-hand side of \eqref{eq:adiabatic-limit}.
\end{remark}
\begin{remark}
If $\psi$ is fixed, $v_{\infty\infty} = - (A(\omega) + A(-\omega))/2$ is an 
analytic function of $\omega$.  Consequently, it either identically equals 0 
or has at most countably many zeroes without accumulation points.
\end{remark}
\begin{remark}
Having in mind~\eqref{eq:tilted-velocity}, we see that the adiabatic drift 
velocity $v_{\infty\infty}$ equals the arithmetic mean of the limiting drift 
velocities for the tilted potentials $\psi(x) \pm \omega x$.
\end{remark}
\begin{remark}
Observe that given $\psi$ and $\omega$, it is trivial to compute $A(\omega)$ 
numerically.  In this sense Theorem~\ref{th:adiabatic-limit} is effective.
\end{remark}

\subsection{Bulk transport direction}

There is another formula that allows one to determine the direction of the 
adiabatic transport.  Put
\begin{equation}
\label{eq:defJ}
J = J(\psi, \omega)
=
\frac{2\displaystyle \int_0^1
\int_0^x
\sinh (\psi(x) - \psi(y)) \sinh [\omega (x - y - 1/2)]
\cd y
\cd x}{\sinh(\omega/2)}
.
\end{equation}

\begin{prop}
\label{pr:shsh}
If $\omega \ne 0$, the sign of $v_{\infty\infty}$ coincides with the sign of 
$J$.  Consequently, if $J > 0$ ($J < 0$), then the adiabatic transport goes in 
the positive (respectively, negative) direction.
\end{prop}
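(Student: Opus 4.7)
The plan is to exhibit the identity
\begin{equation*}
v_{\infty\infty} = \frac{2\sinh^2(\omega/2)}{D(\omega)\,D(-\omega)} \, J,
\end{equation*}
where $D(\omega) := \alpha(\omega)\beta(\omega) + \beta_+(\omega)\beta_-(\omega)$ denotes the common denominator in \eqref{eq:defAB}. Once this is in hand the claim follows for $\omega \ne 0$, since $D(\omega)$ and $D(-\omega)$ are both positive: by Remark~\ref{rem:A}, $A(\omega) = \alpha(\omega)/D(\omega)$ has the same sign as $\alpha(\omega)$ for $\omega \ne 0$, forcing $D(\omega) > 0$, while $D(0) = \beta_+(0)\beta_-(0) > 0$ trivially.

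The first step will be to recast $D(\omega)$ as a parallelogram integral
\begin{equation*}
D(\omega) = \int_0^1 \int_x^{x+1} e^{\psi(y) - \psi(x) + \omega(y-x)} \, dy \, dx.
\end{equation*}
To see this, I split $\beta_+(\omega)\beta_-(\omega) = \int_0^1 \int_0^1 e^{\psi(y)-\psi(x)+\omega(y-x)}\,dy\,dx$ across $\{y < x\}$ (which contributes $\beta(\omega)$) and $\{y > x\}$; then the substitution $y' = y+1$ combined with the $1$-periodicity of $\psi$ rewrites $e^\omega \beta(\omega)$ as the integral over $\{1 < y < x+1\}$, whose union with $\{x < y < 1\}$ is the parallelogram $\{x < y < x+1\}$. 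Combining this with $\alpha(\pm\omega) = \pm 2 e^{\pm\omega/2}\sinh(\omega/2)$ and the identity $e^{\omega/2 - \omega s} - e^{-\omega/2 + \omega s} = -2\sinh(\omega(s - 1/2))$ applied with $s = y - x$, a direct manipulation gives
\begin{equation*}
A(\omega) + A(-\omega) = -\frac{4\sinh(\omega/2)\, I}{D(\omega)\,D(-\omega)}, \qquad I := \int_0^1 \int_x^{x+1} e^{\psi(y)-\psi(x)} \sinh(\omega(y-x-1/2)) \, dy \, dx.
\end{equation*}

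The heart of the proof is symmetrizing $I$ so as to uncover the structure of $J$. I will split the inner integral at $y = 1$. On the piece $\{1 < y < x + 1\}$, the substitution $y' = y - 1$ together with the periodicity of $\psi$ and the identity $\sinh(\omega(y'-x+1/2)) = -\sinh(\omega(x-y'-1/2))$ transforms the contribution into $-\int_0^1 \int_0^x e^{\psi(y)-\psi(x)}\sinh(\omega(x-y-1/2))\,dy\,dx$. On the piece $\{x < y < 1\}$, swapping the names $(x,y)$ gives $\int_0^1 \int_0^x e^{\psi(x)-\psi(y)}\sinh(\omega(x-y-1/2))\,dy\,dx$. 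Summing, the integrand over $\{0 < y < x < 1\}$ becomes $[e^{\psi(x)-\psi(y)} - e^{\psi(y)-\psi(x)}]\sinh(\omega(x-y-1/2)) = 2\sinh(\psi(x)-\psi(y))\sinh(\omega(x-y-1/2))$, yielding $I = \sinh(\omega/2)\,J$. Substituting back produces the target identity for $v_{\infty\infty}$. I expect the main obstacle to be spotting the parallelogram representation of $D$; once it is in hand, the $\sinh$-$\sinh$ structure of $J$ emerges naturally from the periodicity-driven symmetrization of $I$.
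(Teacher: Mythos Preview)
Your argument is correct and yields the explicit proportionality
\[
v_{\infty\infty}=\frac{2\sinh^2(\omega/2)}{D(\omega)\,D(-\omega)}\,J,
\]
which indeed gives the sign claim once $D(\pm\omega)>0$ is noted. Your approach, however, differs from the paper's. The paper works with the \emph{reciprocals} $1/A(\pm\omega)=D(\pm\omega)/\alpha(\pm\omega)$, which simplifies almost immediately to
\[
\frac{1}{A(\omega)}=\frac{1}{\sinh(\omega/2)}\int_0^1\!\int_0^x\cosh\bigl(\psi(y)-\psi(x)+\omega(y-x+1/2)\bigr)\,dy\,dx,
\]
and then obtains the exact identity $\dfrac{1}{A(\omega)}+\dfrac{1}{A(-\omega)}=J$ via the cosh-difference formula; the sign conclusion follows from the elementary observation that when $A(\omega)$ and $A(-\omega)$ have opposite signs, $A(\omega)+A(-\omega)$ and $\frac{1}{A(\omega)}+\frac{1}{A(-\omega)}$ have opposite signs. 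By contrast, you bring $A(\omega)+A(-\omega)$ to a common denominator and need the parallelogram representation of $D(\omega)$ to control the cross term $\alpha(\omega)D(-\omega)+\alpha(-\omega)D(\omega)$; the subsequent symmetrization of your integral $I$ is essentially the same variable swap and periodicity argument the paper uses. The paper's route is slightly shorter and avoids the parallelogram lemma; your route buys an explicit quantitative link between $v_{\infty\infty}$ and $J$, which the paper does not state.
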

\begin{proof}
Using the definition of $A$ \eqref{eq:defAB}, write
\begin{multline*}
\frac{1}{A(\omega)}
=
\frac{1}{\e^\omega - 1}
\Bigg(
(\e^\omega - 1)
\int_0^1
\int_0^x
\e^{\psi(y) - \psi(x) + \omega(y - x)}
\cd y
\cd x
\\
+
\int_0^1
\int_0^1
\e^{\psi(y) - \psi(x) + \omega(y - x)}
\cd y
\cd x
\Bigg)
\\
=
\frac{1}{\e^\omega - 1}
\Bigg(
\e^\omega
\int_0^1
\int_0^x
\e^{\psi(y) - \psi(x) + \omega(y - x)}
\cd y
\cd x
\\
+
\int_0^1
\int_x^1
\e^{\psi(y) - \psi(x) + \omega(y - x)}
\cd y
\cd x
\Bigg)
\\
=
\frac{\e^{\omega/2}}{\e^{\omega} - 1}
\int_0^1
\int_0^x
\Big(
\e^{\psi(y) - \psi(x) + \omega(y - x + 1/2)}
\\
+
\e^{-(\psi(y) - \psi(x) + \omega(y - x + 1/2))}
\cd y
\cd x
\Big)
\\
=
\frac{1}{\sinh (\omega/2)}
\int_0^1
\int_0^x
\cosh (\psi(y) - \psi(x) + \omega(y-x+1/2))
\cd y
\cd x
.
\end{multline*}
Substituting $- \omega$ for $\omega$, we obtain
\begin{equation*}
\frac{1}{A(-\omega)}
=
-
\frac{\int_0^1
\int_0^x
\cosh (\psi(y) - \psi(x) - \omega(y-x+1/2))
\cd y
\cd x}{\sinh (\omega/2)}
.
\end{equation*}
Summing and converting the difference of hyperbolic cosines into product, we 
get
\begin{equation*}
\frac{1}{A(\omega)}
+
\frac{1}{A(-\omega)}
=
J
.
\end{equation*}
Now it suffices to observe that as $A(\omega)$ and $A(-\omega)$ have opposite 
signs, so do the sums $A(\omega) + A(-\omega) = - 2 v_{\infty\infty}$ and 
$1/A(\omega) + 1/A(-\omega) = J$.
\end{proof}

\begin{example}
\emph{Symmetric potentials} satisfying $\psi(x) = \psi(1 - x)$ do not produce 
adiabatic transport.  This follows e.g.\ from Proposition~\ref{pr:shsh}.  
Indeed, if $\psi$ is symmetric, by changing the variables $x' = 1 - y$, $y' = 
1 - x$ in~\eqref{eq:defJ} we get $J(\psi, \omega) = - J(\psi, \omega)$, whence 
$J = 0$.  Note however, that symmetric potentials can produce transport if the 
tilting regime is asymmetric in time unlike~\eqref{eq:switching-h}, see below.
\end{example}
\begin{example}
\emph{Supersymmetric potentials} (see~\cite{rei02})  satisfying $- \psi(x) = 
\psi(x + 1/2)$ do not produce adiabatic transport either.  This, too, can be 
derived from Proposition~\ref{pr:shsh}.  Indeed, utilizing in~\eqref{eq:defJ} 
the change of variables $x' = y + 1/2$, $y' =x - 1/2$ on the set 
$$Q=\left\{(x,y):\frac 1 2 \leq x \leq 1,\, 0\leq y\leq \frac 1 2 \right\},$$ 
and  the change of variables $x'' = x + 1/2$, $y'' = y + 1/2$ on the rest of 
the triangle $$\left\{(x,y):\, 0\leq x \leq 1,\, 0\leq y\leq  x 
\right\}\setminus Q,$$
we deduce that $J = 0$.
\end{example}
\begin{example}
Simple examples of asymmetric potentials such as $\psi(x) = \cos (2\pi x^m)$ 
suggest that if $\psi$ increases (decreases) on a major interval, then the 
direction of adiabatic transport is positive (resp.\ negative).  The following 
theorem justifies this claim.
\end{example}

\begin{theorem}
\label{th:adiab-foo}
Suppose that $\psi \in C^4(S^1)$ strictly increases along an oriented arc 
$\overrightarrow{[\alpha,\beta]}$.
Let $h \colon S^1 \times [0,1] \to S^1$ be a homotopy such that
\begin{enumerate}
\item
$h(\cdot, 0)$ is the identity mapping on $S^1$;
\item
for any $\lambda \in [0,1)$ the mapping $h(\cdot, \lambda) \colon S^1 \to S^1$ 
is $C^4$;
\item
$h(\cdot, 1)$ preserves the orientation on the oriented arc 
$\overrightarrow{[\alpha,\beta]}$;
\item
$h(\cdot, 1)$ maps the oriented arc $\overrightarrow{[\beta,\alpha]}$ onto a 
single point.
\end{enumerate}
Let $v_{\infty\infty}(\lambda)$ be the asymptotic velocity corresponding to 
$\psi_\lambda := \psi(h(\cdot, \lambda))$, where $0 \le \lambda < 1$.  Then 
$v_{\infty\infty}(\lambda) > 0$ if $\lambda$ is sufficiently close to 1.
\end{theorem}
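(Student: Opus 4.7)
The plan is to use Proposition~\ref{pr:shsh} to reduce the claim to showing $J(\psi_\lambda,\omega)>0$ for $\lambda$ close to $1$, and then to pass to the limit $\lambda\to 1$, where the potential $\psi_1 := \psi\circ h(\cdot,1)$ has a particularly simple structure that makes $J(\psi_1,\omega)$ amenable to direct analysis. By assumption~(2), $\psi_\lambda\in C^4(S^1)$ for every $\lambda\in[0,1)$, so Proposition~\ref{pr:shsh} applies and yields $\operatorname{sign} v_{\infty\infty}(\lambda)=\operatorname{sign} J(\psi_\lambda,\omega)$. On the other hand, the functional $\psi\mapsto J(\psi,\omega)$ defined by \eqref{eq:defJ} involves no derivatives of $\psi$, and an elementary estimate based on the Lipschitz continuity of $\sinh$ on bounded sets shows that it is continuous from $C^0(S^1)$ into $\R$. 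Since $h$ is continuous on the compact set $S^1\times[0,1]$, we have $h(\cdot,\lambda)\to h(\cdot,1)$ uniformly, so $\psi_\lambda\to\psi_1$ in $C^0(S^1)$ and therefore $J(\psi_\lambda,\omega)\to J(\psi_1,\omega)$ as $\lambda\to 1$.

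It remains to prove $J(\psi_1,\omega)>0$. Place coordinates on $S^1\cong[0,1)$ with $\alpha=0$ and $\beta=a\in(0,1)$, and set $p:=h(0,1)=h(a,1)$ and $c:=\psi(p)$. Assumption~(4) gives $\psi_1\equiv c$ on $[a,1]$, so the contribution of $[a,1]^2$ to the integral in \eqref{eq:defJ} vanishes. Assumption~(3), combined with the fact that $h(\cdot,1)$ is homotopic to the identity and therefore has degree one, forces $h(\cdot,1)\vert_{[0,a]}$ to lift to a continuous non-decreasing map onto an interval of length one; in particular, $\psi_1\vert_{[0,a]}$ traces out a complete cycle of $\psi$ around $S^1$. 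The integral \eqref{eq:defJ} then splits into an inner piece $I$ over $[0,a]^2\cap\{y<x\}$ and a cross-piece
\[
II \;=\; \int_a^1\!\int_0^a \sinh[\omega(x-y-1/2)]\,\sinh[c-\psi_1(y)]\,dy\,dx,
\]
which via the identity $\cosh A-\cosh B=2\sinh\tfrac{A+B}{2}\sinh\tfrac{A-B}{2}$ and an explicit $x$-integration rewrites as
\[
II \;=\; \frac{2\sinh(\omega(1-a)/2)}{\omega}\int_0^a \sinh(\omega(a/2-y))\,\sinh[c-\psi_1(y)]\,dy.
\]

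The main obstacle is the sign analysis of the combined expression $I+II$. The factor $\sinh[c-\psi_1(y)]$ does not possess a definite sign on $[0,a]$ because $\psi_1\vert_{[0,a]}$ exhausts all values of $\psi$, so positivity does not follow from any monotonicity of the individual integrands; instead, it must emerge from a subtle interaction between $I$ and $II$ that encodes the directional asymmetry built into hypothesis~(3) and the strict monotonicity of $\psi$ on $\overrightarrow{[\alpha,\beta]}$. Once $J(\psi_1,\omega)>0$ is established, the continuity proved above yields $J(\psi_\lambda,\omega)>0$ for every $\lambda$ sufficiently close to $1$, and Proposition~\ref{pr:shsh} then delivers $v_{\infty\infty}(\lambda)>0$, completing the proof.
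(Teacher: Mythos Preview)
Your argument follows the paper's strategy up to the decisive step, and then stops. You correctly reduce via Proposition~\ref{pr:shsh} to proving $J(\psi_\lambda,\omega)>0$ for $\lambda$ near $1$, and you correctly pass to the limit $J(\psi_\lambda,\omega)\to J(\psi_1,\omega)$ (the paper uses dominated convergence; your uniform-convergence argument is equally valid). But you do not prove $J(\psi_1,\omega)>0$: after writing $J$ as $I+II$ you observe that neither piece has a sign and concede that positivity ``must emerge from a subtle interaction between $I$ and $II$'' without supplying that interaction. This is the entire content of the theorem, so the proposal is genuinely incomplete.

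The paper does not attempt any such regional decomposition. Instead, after the normalization $\omega>0$ and $\alpha=0=h(\alpha,\lambda)$, it argues that $h(\cdot,1)$ maps the interior of $\overrightarrow{[\alpha,\beta]}$ onto $S^1\setminus\{0\}$ in an orientation-preserving way, so that $\psi_1$ is increasing on the interval $(0,1)$, and then invokes the functional inequality of Proposition~\ref{pr:phiphi} with $f=\psi_1$, $\varphi=\sinh$ (strictly convex on $[0,\infty)$) and $\Phi(\xi)=\sinh(\omega\xi)$ (odd, positive on $(0,\infty)$) to obtain $J(\psi_1,\omega)>0$ directly. The missing idea in your attempt is precisely this: the positivity of
\[
\int_0^1\!\int_0^x \sinh\bigl(\psi_1(x)-\psi_1(y)\bigr)\,\sinh\bigl(\omega(x-y-\tfrac12)\bigr)\,dy\,dx
\]
is a monotonicity/convexity phenomenon captured by Proposition~\ref{pr:phiphi}, not a cancellation between subregions of the triangle. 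You should discard the $I+II$ split and instead bring the limiting potential into the scope of Proposition~\ref{pr:phiphi}.
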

\begin{proof}
Without loss of generality, $\omega > 0$ and $\alpha = 0 = h(\alpha,\lambda)$ 
for $\lambda \in [0,1]$.  We use Proposition~\ref{pr:shsh}.  Since for any $x 
\in S^1$ we have $\psi_\lambda(x) \to \psi_1(x)$ as $\lambda \to 1$, by the 
Lebesgue dominated convergence theorem we obtain
\begin{multline}
\label{eq:adiab-foo}
J(\omega, \psi_\lambda) \to J(\omega, \psi_1)
\\
=
\frac{2}{\sinh(\omega/2)}
\int_0^1
\int_0^x
\sinh (\psi_1(x) - \psi_1(y)) \sinh \omega (x - y - 1/2)
\cd y
\cd x
.
\end{multline}
The homotopy $h$ maps the interior of the arc $\overrightarrow{[\alpha, 
\beta]}$ onto $S^1 \setminus \{0\}$ and preserves the orientation, so $\psi_1$ 
increases on the interval $(0, 1)$.  Now the right-hand side 
of~\eqref{eq:adiab-foo} is positive by
Proposition~\ref{pr:phiphi} (see below).  Consequently, if $\lambda$ is 
sufficiently close to 1, $J(\omega, \psi_\lambda)$ is also positive and so is 
$v_{\infty\infty}(\lambda)$.
\end{proof}

\begin{remark}
In the notations of Theorem~\ref{th:adiab-foo} the exact formula for the 
limiting velocity is
\begin{equation}
\label{eq:vinftyinfty}
\lim_{\lambda\to 1} v_{\infty\infty}(\lambda)
=
-
\frac{A(\omega, \psi_1) + A(-\omega, \psi_1)}2
.
\end{equation}
It does not depend on the values of the initial potential $\psi$ on the arc 
$\overrightarrow{[\beta, \alpha]}$ that collapses to a point.
\end{remark}

It remains to prove the following functional inequality.
\begin{prop}
\label{pr:phiphi}
Let $f:[0,1]\to \R$ be any increasing continuous function, 
$\varphi:[0,+\infty)\to \R$ be a convex $C^1$-smooth function, and 
$\Phi:\R\to\R$ be an odd continuous function such that $x>0$ implies 
$\Phi(x)>0$.  Then
\begin{equation}
\label{sinineq} \int\limits_0^1\int\limits_0^x 
\varphi\left(f(x)-f(y)\right)\Phi\left(x-y-1/2\right)\,dy\,dx \geq 0.
\end{equation}
Moreover, if $f$ is strictly increasing and $\varphi$ is strictly convex (in 
the sense that $\varphi'$ is strictly increasing), then inequality 
\eqref{sinineq} is strict.
\end{prop}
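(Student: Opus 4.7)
The plan is to integrate by parts in one variable to introduce an antisymmetric kernel, then exploit a reflection to fold the integral so that its integrand becomes manifestly a product of nonnegative factors (combining monotonicity of $f$ with convexity of $\varphi$). I would first reduce to smooth strictly increasing $f$ by approximation. Writing $\varphi(f(x)-f(y)) = \varphi(0) + \int_y^x \varphi'(f(z)-f(y))\,f'(z)\,dz$ (FTC after the substitution $r=f(z)-f(y)$) and interchanging the order of integration yields
\begin{equation*}
I = \varphi(0)\,I_0 + \int_0^1 f'(z)\int_0^z \varphi'(f(z)-f(y))\,H(y,z)\,dy\,dz,
\end{equation*}
with $I_0:=\int_0^1\!\int_0^x \Phi(x-y-1/2)\,dy\,dx$ and $H(y,z):=\int_z^1 \Phi(x-y-1/2)\,dx$. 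A direct computation using the oddness of $\Phi$ gives $I_0 = -2\int_0^{1/2} v\,\Phi(v)\,dv \le 0$, so the affine contribution $\varphi(0)\,I_0$ is already nonnegative in the natural regime $\varphi(0)\le 0$ (which covers the relevant case $\varphi=\sinh$ in Theorem~\ref{th:adiab-foo}).

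The heart of the argument is two elementary properties of the kernel $H$. First, the substitution $x \mapsto 1+z-x$ in the defining integral preserves $[z,1]$, and together with oddness of $\Phi$ yields the antisymmetry $H(z-y,z) = -H(y,z)$. Second, with $\tilde{\Psi}(t):=\int_0^t \Phi(s)\,ds$ — an even function that is nondecreasing on $[0,\infty)$ because $\Phi>0$ there — one has $H(y,z) = \tilde{\Psi}(|1/2-y|) - \tilde{\Psi}(|z-y-1/2|)$, so the factorization
\begin{equation*}
(1/2-y)^2 - (z-y-1/2)^2 = (1-z)(z-2y)
\end{equation*}
shows that $H(y,z) \ge 0$ iff $y \le z/2$ (and strictly so for $0<2y<z<1$).

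Armed with these, the reflection $y \mapsto z-y$ on $[z/2,z]$ folds the inner integral as
\begin{equation*}
\int_0^z \varphi'(f(z)-f(y))\,H(y,z)\,dy = \int_0^{z/2}\!\bigl[\varphi'(f(z)-f(y)) - \varphi'(f(z)-f(z-y))\bigr]\,H(y,z)\,dy.
\end{equation*}
On $[0,z/2]$ we have $y \le z-y$, so monotonicity of $f$ gives $f(z)-f(y) \ge f(z)-f(z-y) \ge 0$, and the bracket is then $\ge 0$ by the monotonicity of $\varphi'$ (convexity of $\varphi$); combined with $H(y,z) \ge 0$, the integrand is nonnegative, proving \eqref{sinineq}. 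For the strict version, under strictly increasing $f$ and strictly convex $\varphi$, both factors are strictly positive on the open set $\{0<2y<z<1\}$, giving a strict inequality.

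The main obstacle I anticipate is identifying the kernel $H$ along with its antisymmetry and the sign pattern controlled by $(1-z)(z-2y)$: without these two structural identities the competing positive and negative contributions of $\Phi(x-y-1/2)$ over the triangle seem to admit no pointwise cancellation, and the convexity of $\varphi$ plays no visible role in the naive form of the inequality.
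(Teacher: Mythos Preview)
Your argument is sound and takes a genuinely different route from the paper. The paper changes variables to $\xi=x-y-1/2$, pairs $\xi$ with $-\xi$, and reduces to the inequality $\int_0^a\varphi(f(y+b)-f(y))\,dy\ge\int_0^b\varphi(f(y+a)-f(y))\,dy$ for $a<b$ (Lemma~\ref{Lemm1c}), which it proves first for rational $a/b$ via a discrete convexity lemma (Lemma~\ref{Lemm1d}) and then by continuity. Your method---writing $\varphi(f(x)-f(y))=\varphi(0)+\int_y^x\varphi'(f(z)-f(y))\,df(z)$, introducing $H(y,z)=\int_z^1\Phi(x-y-1/2)\,dx$, and exploiting the antisymmetry $H(z-y,z)=-H(y,z)$ together with the sign pattern $\operatorname{sign}H(y,z)=\operatorname{sign}\bigl((1-z)(z-2y)\bigr)$---is more direct and entirely bypasses the discrete/rational step. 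Both approaches actually require the extra hypothesis $\varphi(0)\le 0$, which you explicitly flagged: the paper's Lemma~\ref{Lemm1d} needs it too (there $F(0,\dots,0)=-\varphi(0)$, which must be $\ge 0$), and without it the proposition fails, e.g.\ $\varphi\equiv 1$, $\Phi(t)=t$, $f(x)=x$ gives $I=-1/12$. One small caveat: approximating by smooth $f$ yields only the non-strict inequality in the limit; for the strict part with a merely continuous strictly increasing $f$, run your computation with Riemann--Stieltjes integration against $df$ in place of $f'(z)\,dz$, so the positivity of $df$ on every interval delivers strictness directly.
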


We first prove two auxiliary statements.
\begin{lemma} \label{Lemm1d} Let $\varphi$ be as above, and 
$x_1,\dots,x_{2m}$, $m\in \mathbb{N}$, be a collection of non-negative 
numbers.  Then
\begin{equation}\label{sin1d} \sum\limits_{i=1}^{m}\varphi\left(\sum\limits_{j=1}^{m+1}x_{i+j-1}\right)\geq \sum\limits_{i=1}^{m+1}\varphi\left(\sum\limits_{j=1}^{m}x_{i+j-1}\right).\end{equation} 
If $\varphi$ is strictly convex and the numbers $x_1,\dots,x_{2m}$ are positive, then this inequality is strict.
\end{lemma}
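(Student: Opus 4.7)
The plan is to derive \eqref{sin1d} from the Hardy--Littlewood--P\'olya majorization inequality for convex functions, with the combinatorial content reduced to a Hall-type matching argument. I will first introduce the partial sums $b_0 = 0$ and $b_k = x_1 + \cdots + x_k$ for $k = 1, \ldots, 2m$, which form a non-decreasing sequence; each window sum becomes a difference $S_{i, p} = b_{i+p-1} - b_{i-1}$, and \eqref{sin1d} rewrites as
\[
\sum_{i=1}^m \varphi(b_{i+m} - b_{i-1}) \;\geq\; \sum_{i=1}^{m+1} \varphi(b_{i+m-1} - b_{i-1}).
\]
A direct bookkeeping shows that both sides, expanded in the $x_k$, have the same total argument $\sum_k c_k x_k$ with ``tent'' coefficients $c_k = \min(k, m, 2m+1-k)$, so the linear part of $\varphi$ contributes equally to both sides; after the usual reduction to $\varphi(0) = 0$ (absorbing the constant), it suffices to prove the stronger statement that the multiset $L := \{b_{i+m} - b_{i-1}\}_{i=1}^m \cup \{0\}$ of $m+1$ non-negative reals \emph{majorizes} the multiset $R := \{b_{i+m-1} - b_{i-1}\}_{i=1}^{m+1}$. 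Indeed, Hardy--Littlewood--P\'olya will then immediately deliver the inequality.

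For the top-$k$ partial-sum dominance needed for majorization I will use Hall's marriage theorem. The crucial observation is that each RHS element $S_{i, m}$ is dominated by the LHS elements with indices in $\{i-1, i\} \cap [1, m]$, since $S_{i, m+1} = S_{i, m} + x_{i+m}$ and $S_{i-1, m+1} = S_{i, m} + x_{i-1}$. I therefore set up a bipartite graph on $R$-vertices $\{1, \ldots, m+1\}$ and $L$-vertices $\{1, \ldots, m\}$ with these edges and check Hall's condition: for any $I \subset [1, m+1]$, writing $I$ as a union of $r$ maximal runs of consecutive integers, a direct count gives
\[
|N(I)| = |I| + r - \mathbf{1}[1 \in I] - \mathbf{1}[m+1 \in I],
\]
which is at least $|I|$ except when $I$ equals the full set $[1, m+1]$. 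For any $|I| = k \leq m$, Hall then yields an injection $\pi\colon I \hookrightarrow [1, m]$ along the edges, so that $S_{\pi(i), m+1} \geq S_{i, m}$. Choosing $I$ to consist of the indices of the top $k$ values in $R$ and summing yields $\sum_{i \in I} S_{i, m} \leq \sum_{j \in \pi(I)} S_{j, m+1} \leq$ (sum of top $k$ elements of $L$); this is the top-$k$ dominance, and the extreme case $k = m+1$ is tight because the totals of $L$ and $R$ coincide.

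With majorization in hand, Hardy--Littlewood--P\'olya delivers \eqref{sin1d}; the strict case under strictly convex $\varphi$ and strictly positive $x_j$ follows because at least one top-$k$ comparison (already $k = 1$) becomes strict, so the strict form of HLP applies. The step I expect to require the most care is the combinatorial one: tracking how the run structure of a subset $I$ controls $|N(I)|$, and then converting a per-element matching into the top-$k$ sum inequality that actually feeds into majorization. Once that combinatorial bridge is built, the remaining analytic content is entirely carried by the classical HLP theorem.
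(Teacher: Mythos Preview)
Your majorization-and-Hall argument is correct and takes a genuinely different route from the paper. The paper gives a two-line calculus proof: writing $F(x_1,\dots,x_{2m})$ for the difference of the two sides, one checks that each partial derivative $\partial F/\partial x_k$ is nonnegative (for $1\le k\le m$ one pairs $\varphi'(S_{i,m+1})$ with $\varphi'(S_{i,m})$ term by term, using $S_{i,m+1}=S_{i,m}+x_{i+m}$ and monotonicity of $\varphi'$; for $m+1\le k\le 2m$ one pairs with $\varphi'(S_{i+1,m})$ using $S_{i,m+1}=S_{i+1,m}+x_i$), and that $F$ vanishes at the origin. Your approach instead uncovers the structural reason---the multiset $\{S_{i,m+1}\}_{i=1}^m\cup\{0\}$ majorizes $\{S_{i,m}\}_{i=1}^{m+1}$---and then appeals to Hardy--Littlewood--P\'olya. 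This is conceptually pleasing but considerably heavier than needed; in particular the Hall-matching step could be replaced by the same one-line pairwise comparison the paper uses.

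One point does need correction. Your ``usual reduction to $\varphi(0)=0$ (absorbing the constant)'' is not valid here: the two sides have $m$ and $m+1$ terms, so subtracting a constant $c$ from $\varphi$ shifts them by $mc$ and $(m+1)c$ respectively, and the inequality is \emph{not} invariant under this operation. Indeed the lemma as literally stated is false for $\varphi(0)>0$ (take $m=1$, $\varphi\equiv 1$). The paper's own proof shares this hidden hypothesis, since there $F(0,\dots,0)=-\varphi(0)$, not $0$. In the intended application one has $\varphi=\sinh$ on $[0,\infty)$, so $\varphi(0)=0$ and both arguments go through; but you should treat $\varphi(0)=0$ as an assumption rather than claim it can be arranged by subtraction.
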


\begin{proof}
It suffices to observe that all the partial derivatives of the function 
$F(x_1,\dots,x_{2m})=\sum\limits_{i=1}^{m}\varphi\left(\sum\limits_{j=1}^{m+1}x_{i+j-1}\right)- 
\sum\limits_{i=1}^{m+1}\varphi\left(\sum\limits_{j=1}^{m}x_{i+j-1}\right)$ are 
non-negative (and even positive provided $\varphi$ is strictly convex and  
$x_1,\dots,x_{2m}$ are positive), and $F(0,\dots,0)=0$.
\end{proof}
\begin{lemma}
\label{Lemm1c}
Let $\varphi$ and $f$ be as above, and $a<b$ be two positive numbers.  Then
\begin{equation}\label{sin1c} \int\limits_0^a \varphi\left(f(x+b)-f(x)\right)\, dx\geq \int\limits_0^b \varphi\left(f(x+a)-f(x)\right)\,dx.\end{equation} 
If $f$ is strictly increasing, $\varphi$ is strictly convex, and $a/b$ is a rational number, then inequality \eqref{sin1c} is strict.
\end{lemma}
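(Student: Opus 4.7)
The plan is to discretize both integrals on a common grid to reduce \eqref{sin1c} to a discrete inequality that extends Lemma~\ref{Lemm1d}, and then to approximate irrational ratios $a/b$ by rational ones.

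I first assume $a/b = p/q$ is rational with $p<q$ positive integers, so $a = pc$ and $b = qc$ for some $c>0$. Partitioning $[0,a+b]$ into $p+q$ intervals of length $c$ and substituting $x = kc + t$ with $t \in [0,c]$ in each integral, I set
\[
x_j(t) := f((j+1)c + t) - f(jc + t) \ge 0, \quad j = 0,\dots,p+q-2.
\]
Telescoping gives $f(kc + t + b) - f(kc + t) = \sum_{j=k}^{k+q-1} x_j(t)$ and the analogous formula with $a$, $p$ in place of $b$, $q$. Splitting each integral into $p$ (respectively $q$) pieces of length $c$ and applying Fubini reduces the difference of the two sides of \eqref{sin1c} to
\[
\int_0^c \left[\,\sum_{k=0}^{p-1}\varphi\!\left(\sum_{j=k}^{k+q-1} x_j(t)\right) - \sum_{k=0}^{q-1}\varphi\!\left(\sum_{j=k}^{k+p-1} x_j(t)\right)\right] dt.
\]

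It therefore suffices to establish, for any $p<q$ and non-negative $y_0,\dots,y_{p+q-2}$, the discrete inequality
\[
\sum_{k=0}^{p-1}\varphi\!\left(\sum_{j=k}^{k+q-1} y_j\right) \ge \sum_{k=0}^{q-1}\varphi\!\left(\sum_{j=k}^{k+p-1} y_j\right).
\]
This is the natural generalization of Lemma~\ref{Lemm1d} (which is the case $p = m$, $q = m+1$), and I would prove it by the very same mechanism: the difference $G$ of the two sides vanishes at the origin and each partial derivative $\partial G/\partial y_\ell$ is non-negative. The combinatorial input is that, for fixed $\ell$, the set of $q$-windows containing $y_\ell$ and the set of $p$-windows containing $y_\ell$ are consecutive integer intervals of equal cardinality; ordering both by starting index, their $i$-th smallest starting indices $k_i^{(q)}$ and $k_i^{(p)}$ satisfy $k_i^{(q)} \le k_i^{(p)}$ and $k_i^{(p)} - k_i^{(q)} \le q-p$, so the paired $p$-window is contained in the paired $q$-window. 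Non-negativity of the $y_j$ and monotonicity of $\varphi'$ (by convexity) then make each paired term of $\partial G/\partial y_\ell$ non-negative.

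For irrational $a/b$, I approximate by a sequence of rationals $(a_n, b_n) \to (a,b)$ with $a_n < b_n$ and pass to the limit using the continuity of $f$ and dominated convergence. For the strict inequality under the stronger hypotheses, $f$ strictly increasing forces every $x_j(t) > 0$ while strict convexity of $\varphi$ makes $\varphi'$ strictly increasing, so each paired comparison becomes strict on a set of positive measure of $t$, and this lifts to strictness in \eqref{sin1c}. The main obstacle is the combinatorial bookkeeping in the discrete inequality, in particular the verification of the order-preserving pairing of $p$- and $q$-windows; once this is in place, the rest is routine discretization, Fubini, and approximation.
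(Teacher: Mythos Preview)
Your argument is correct and runs parallel to the paper's, but with one genuine difference in how the general rational case is handled. The paper discretizes only for the consecutive ratio $a/b=m/(m+1)$, invokes Lemma~\ref{Lemm1d} exactly as stated, and then disposes of an arbitrary rational $a/b=m/n$ by writing $m/n=\frac{m}{m+1}\cdot\frac{m+1}{m+2}\cdots\frac{n-1}{n}$ and iterating the consecutive case; this chaining step is left terse. You instead discretize directly on a grid of step $c=a/p=b/q$ and prove the full $(p,q)$ version of the discrete inequality in one stroke, supplying the extra combinatorial ingredient that is not needed when $q=p+1$: for each index $\ell$, the $p$-windows and $q$-windows containing $y_\ell$ form integer intervals of the same length, and pairing them by rank gives a $p$-window contained in its partner $q$-window, so monotonicity of $\varphi'$ yields $\partial G/\partial y_\ell\ge 0$. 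This is a clean generalization of the paper's derivative computation and makes the argument self-contained, at the price of the window-pairing bookkeeping. For irrational $a/b$ and for the strict inequality the two proofs coincide. (Both your claim that $G$ vanishes at the origin and the paper's claim that $F(0,\dots,0)=0$ implicitly use $\varphi(0)=0$, which holds in the intended application with $\varphi=\sinh$.)
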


\begin{proof}
Assume first that the ratio $a/b$ is equal to $m/m+1$ with some natural number 
$m$.  After rescaling, without loss of generality we may suppose that $a=m$. 
In this case, inequality \eqref{sin1c} may be rewritten as
\begin{multline} 
\sum\limits_{i=1}^{m} \int\limits_0^1\varphi\left( f(x+i+m)-f(x+i-1)\right)\,dx\\ \geq \sum\limits_{i=1}^{m+1} \int\limits_0^1\varphi\left(f(x+i+m-1)-f(x+i-1)\right)\,dx, \end{multline} or
\begin{multline} 
\sum\limits_{i=1}^{m} \int\limits_0^1\varphi\left(\sum\limits_{j=1}^{m+1} f(x+i+j-1)-f(x+i+j-2)\right)\,dx\\ \geq \sum\limits_{i=1}^{m+1} \int\limits_0^1\varphi\left(\sum\limits_{j=1}^{m}f(x+i+j-1)-f(x+i+j-2)\right)\,dx, \end{multline} and thus the statement of the lemma follows from Lemma \ref{Lemm1d}. 

If $a/b$ is a rational number $\frac mn$, with $m,n\in\mathbb{N}$, $m<n$, then 
it can be decomposed  as $\frac m{m+1} \cdot \frac {m+1}{m+2} \cdot{}\cdots 
{}\cdot \frac {n-1}{n}$, and the statement of the lemma follows from the 
previous case.  By continuity, the non-strict inequality \eqref{sin1c} holds 
for all irrational $a/b$.
\end{proof}

\begin{proof}[Proof of Proposition~\ref{pr:phiphi}]
We make a change of variables in the double integral \eqref{sinineq}, and 
conclude that it is equal to
\begin{multline}
\label{sinphi}
\int\limits_{-1/2}^{1/2}\int\limits_0^{1/2-\xi} 
\varphi\left(f(y+\xi+1/2)-f(y)\right)\Phi(\xi)\,dy\,d\xi \\ 
=\int\limits_{0}^{1/2}\Big(\int\limits_0^{1/2-\xi} 
\varphi\left(f(y+\xi+1/2)-f(y)\right)\,dy\\-\int\limits_0^{1/2+\xi} 
\varphi\left(f(y-\xi+1/2)-f(y)\right)\,dy\Big)\Phi(\xi)\,d\xi \geq 
0
\end{multline}
by Lemma \ref{Lemm1c}.  When $f$ is strictly increasing and $\varphi$ is 
strictly convex, this integral is strictly positive since the set 
\begin{multline}\Big\{\xi\in\left(0,\frac 12\right)\Big| 
\int\limits_0^{1/2-\xi} \varphi\left(f(y+\xi+1/2)-f(y)\right)\,dy\\ 
>\int\limits_0^{1/2+\xi} 
\varphi\left(f(y-\xi+1/2)-f(y)\right)\,dy\Big\}\end{multline} is open and 
non-empty (since it contains the rational numbers), and therefore has non-zero 
Lebesgue measure.
\end{proof}

\section{Semiadiabatic limit and the stochastic Stokes' drift} \label{slssd}

Consider the tilting ratchet with the potential
\begin{equation}
\label{eq:semiad-tilt}
\Psi(x, t) = \psi(x) + H(t)x
,
\end{equation}
where
\begin{equation}
\label{eq:semiad-added-tilt}
H(t) =
\begin{cases}
\Omega , & 0 < t< \tau, \\
\omega , & \tau < t < T
,
\end{cases}
\end{equation}
and as usual $\psi(x)$ is $1$-periodic in $x$ and $H(t)$ is $T$-periodic in 
$T$.  We assume that the non-bias condition~\eqref{eq:unbiasedH} is satisfied, 
i.e.,
\begin{equation}
\label{eq:semiad-cond-unbiased}
\Omega \tau + \omega(T - \tau) = 0
.
\end{equation}
We regard $T$, $\tau$, and $\omega$ as the independent parameters of the 
tilting ratchet; $\Omega$ can be expressed via the independent parameters by 
means of~\eqref{eq:semiad-cond-unbiased}.

We study the regime~\eqref{eq:semiad-tilt} in the so-called \emph{semiadiabatic 
limit} supposing that $T$ is large, $\tau/T$ is small, and $\omega$ is 
constant.  It follows from~\eqref{eq:semiad-cond-unbiased} that in the 
semiadiabatic limit $|\Omega|$ is large.

As before, let $v_\infty = v_\infty(\omega, T, \tau)$ denote the eventual 
drift velocity given by~\eqref{eq:def-asymptotic-velocity}.  Further, let 
$g^*$ be the $1$-periodic solution of the problem
\begin{equation}
\label{eq:semiad-defg}
g_{xx} + ((\psi_x + \omega)g)_x = 0, \quad \int_0^1 g(x) \cd x = 1
,
\end{equation}
which also satisfies
\begin{equation*}
g^*_x + (\psi_x + \omega)g^* = A(\omega)
\end{equation*}
with $A(\omega)$ defined by~\eqref{eq:defAB} (see 
Remark~\ref{rem:tilted-velocity}).  Put
\begin{equation}
\label{eq:semiad-limit-velocity}
v_{\infty\infty}(\omega)
=
-
\int_0^1
\psi_x g^*
\cd x
\equiv
\omega - A(\omega)
.
\end{equation}
Our main result concerning the semiadiabatic tilting is that 
$v_{\infty\infty}(\omega)$ is the asymptotic average drift velocity as $T \to 
\infty$ and $\tau/T \to 0$, and its sign coincides with the sign of $\omega$ 
provided that $\psi$ is nontrivial.  For definitiveness we assume that $\omega > 0$.

First we address the positivity of $v_{\infty\infty}$.  It will follow from 
the next proposition.

\begin{prop}
\label{pr:funct}
Given $\omega > 0$, the functional
\begin{equation}
\label{funct} 
\mathcal{V}_\omega(F)=\alpha(\omega)\int\limits_0^1\int\limits_0^x \frac 
{F(y)}{F(x)}\cd y\cd x+\int\limits_0^1\int\limits_0^1 \frac {F(y)}{F(x)}\cd 
y\cd x,
\end{equation}
where $F\in C[0,1],\ F(x)>0$ for all $x\in[0,1]$, attains its global minimum 
$\alpha(\omega)/\omega$ when and only when $F(x)=C\e^{\omega x}$.
\end{prop}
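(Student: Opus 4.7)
The plan is to reduce the inequality $\mathcal{V}_\omega(F) \ge \alpha(\omega)/\omega$ to a one-line application of Jensen's inequality after a suitable change of variables. First I would combine the two double integrals using $1+\alpha(\omega) = \e^{\omega}$:
\begin{equation*}
\mathcal{V}_\omega(F) = \e^{\omega} \int_0^1 \int_0^x \frac{F(y)}{F(x)} \cd y \cd x + \int_0^1 \int_x^1 \frac{F(y)}{F(x)} \cd y \cd x.
\end{equation*}
Using positivity of $F$, I would then set $F(x) = \e^{\omega x + \psi(x)}$ with $\psi \in C[0,1]$, so that $F(y)/F(x) = \e^{\omega(y-x) + \psi(y) - \psi(x)}$. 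In the second piece I would substitute $z = y - x \in [0, 1-x]$, and in the first $z = y - x + 1 \in [1-x, 1]$; in both cases the exponential prefactor in $\omega$ collapses to $\e^{\omega z}$. Denoting by $\tilde\psi$ the $1$-periodic extension of $\psi|_{[0,1]}$ to $\R$, the two triangular pieces glue along the diagonal $z = 1-x$ into
\begin{equation*}
\mathcal{V}_\omega(F) = \int_0^1 \e^{\omega z} \left( \int_0^1 \e^{\tilde\psi(x+z) - \tilde\psi(x)} \cd x \right) \cd z.
\end{equation*}

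Next I would apply Jensen's inequality to the inner integral: by convexity of the exponential,
\begin{equation*}
\int_0^1 \e^{\tilde\psi(x+z) - \tilde\psi(x)} \cd x \ge \exp \left( \int_0^1 \bigl( \tilde\psi(x+z) - \tilde\psi(x) \bigr) \cd x \right) = 1,
\end{equation*}
since by $1$-periodicity of $\tilde\psi$ the exponent vanishes identically in $z$. Integrating against $\e^{\omega z}$ yields $\mathcal{V}_\omega(F) \ge \int_0^1 \e^{\omega z} \cd z = \alpha(\omega)/\omega$. A direct substitution shows that $F(x) = C\e^{\omega x}$ (i.e.\ $\psi$ constant, so the inner integrand is identically $1$) attains this value, confirming that the bound is sharp.

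To identify the equality case, continuity of $F$ makes $z \mapsto \int_0^1 \e^{\tilde\psi(x+z) - \tilde\psi(x)} \cd x$ continuous, so equality in the outer integral forces equality in Jensen for every $z$; that is, $\tilde\psi(x+z) - \tilde\psi(x)$ must be independent of $x$ for each fixed $z$. Setting $x = 0$ and writing $g(z) = \tilde\psi(z) - \tilde\psi(0)$, this gives Cauchy's functional equation $g(x+z) = g(x) + g(z)$ on an open sub-square of $[0,1]^2$; continuity forces $g$ to be linear, and $1$-periodicity of $\tilde\psi$ then forces the slope to be $0$. Therefore $\psi$ is constant and $F(x) = C\e^{\omega x}$.

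I expect the main obstacle to be spotting the correct change of variables---effectively viewing $y - x$ modulo $1$ on the torus---that decouples the weighted double integral into the product of $\e^{\omega z}$ and a Jensen-amenable kernel; the identity $1 + \alpha(\omega) = \e^{\omega}$ is precisely what lets the two triangular pieces stitch together after the periodic extension, and once this is done the rest of the argument is automatic.
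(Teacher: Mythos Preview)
Your proof is correct and takes a genuinely different route from the paper. The paper normalises to $\int_0^1 F=1$, substitutes $G(x)=1+\alpha(\omega)\int_0^x F$ and then $H=\ln G$, so that $H(0)=0$, $H(1)=\omega$, $H'>0$; this reduces the functional to $\alpha(\omega)\int_0^1 1/H'(x)\cd x$, and the Cauchy--Schwarz inequality (equivalently Jensen for $t\mapsto 1/t$) gives $\int_0^1 1/H'\ge 1/\!\int_0^1 H'=1/\omega$, with equality iff $H'\equiv\omega$. You instead write $F=\e^{\omega x+\psi}$, pass to the torus variable $z=y-x\bmod 1$, and apply Jensen for the exponential in the inner integral. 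The paper's reduction is slicker and makes the equality case immediate; your change of variables is the natural one in view of the application in Proposition~\ref{pr:semiad-bounds} (where $F=\e^{\psi+\omega x}$ already) and makes the underlying periodic structure explicit. One small simplification of your equality analysis: since the mean of $\tilde\psi(\cdot+z)-\tilde\psi(\cdot)$ over $[0,1]$ vanishes by periodicity, the constant forced by the Jensen equality is in fact $0$ for every $z$, so $\tilde\psi$ is a.e.\ constant directly, without passing through Cauchy's functional equation.
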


\begin{proof}
Since the functional is homogeneous of degree zero, it suffices to prove that 
under the additional restriction
\begin{equation}
\int\limits_0^1 F(y)\cd y=1,
\end{equation}
the only minimizer is $F(x)=\frac{\omega}{\alpha(\omega)} \e^{\omega x}$. 

Let $G(x)=1+\alpha(\omega)\int\limits_0^x F(y)\cd y$.  Then $G(0)=1,$ 
$G(1)=\e^\omega$, $G'(\cdot)>0$, and our functional becomes
\begin{equation}
\label{funct1}
\tilde{\mathcal{V}}_\omega(G) =\alpha(\omega)\int\limits_0^1\frac 
{G(x)}{G'(x)}\cd x
.
\end{equation} 

Let now $H(x)=\ln G(x)$.  Then $H(0)=0,$ $H(1)=\omega,$ $H'(\cdot)>0$, and 
\begin{equation}
\label{funct2}
\tilde{\tilde{\mathcal{V}}}_\omega(H) =\alpha(\omega)\int\limits_0^1\frac 
1{H'(x)}\cd x
.
\end{equation}

The Cauchy--Bunyakovskii--Schwarz inequality implies that \begin{equation}
\int\limits_0^1\frac 1{H'(x)}\cd x\geq \frac 1 {\int\limits_0^1 H'(x)\cd x}= 
\frac 1 {\omega}
,
\end{equation}
and the equality holds only if $H'(x)\equiv \omega$.  This means that the 
minimum of $\mathcal{V}_\omega$ is achieved merely if $H(x)=\omega x$, 
$G(x)=\e^{\omega x}$ and $F(x)=\frac{\omega}{\alpha(\omega)} \e^{\omega x}$. 
\end{proof}

\begin{remark}
It is interesting to observe that the functionals \eqref{funct} and 
\eqref{funct1} neither are convex nor fit into the framework of general 
$L^1$-lower semicontinuity criteria which go back to \cite{ser61} (see e.g.\ 
\cite{gori} for a review).
\end{remark}

\begin{prop}
\label{pr:semiad-bounds}
Given a $1$-periodic $\psi \in C^4 (\R)$ and $\omega > 0$, $v_{\infty\infty}$ 
satisfies
\begin{equation}
\label{eq:semiad-bounds}
0 \le v_{\infty\infty} \equiv \omega - A(\omega) < \omega
.
\end{equation}
Moreover, $v_{\infty\infty} = 0$ if and only if $\psi$ is constant.
\end{prop}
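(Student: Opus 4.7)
The plan is to recognize that the proposition is essentially a direct consequence of Proposition~\ref{pr:funct} applied to a judicious choice of the function $F$, together with Remark~\ref{rem:A} for the strict upper bound.

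First I would handle the upper bound $v_{\infty\infty} < \omega$. This is equivalent to $A(\omega) > 0$, which is exactly the content of Remark~\ref{rem:A} for $\omega > 0$, so there is nothing to do.

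For the lower bound $v_{\infty\infty} \geq 0$, i.e.\ $A(\omega) \leq \omega$, the key observation is the identification of $\alpha/A(\omega)$ with a value of the functional $\mathcal V_\omega$. Specifically, I would choose
\begin{equation*}
F(x) = \e^{\omega x + \psi(x)},
\end{equation*}
which is positive and continuous on $[0,1]$. Then a direct computation gives
\begin{equation*}
\int_0^1 \int_0^x \frac{F(y)}{F(x)}\cd y\cd x = \beta(\omega,\psi), \qquad
\int_0^1 \int_0^1 \frac{F(y)}{F(x)}\cd y\cd x = \beta_+(\omega,\psi)\,\beta_-(\omega,\psi),
\end{equation*}
so by the definition \eqref{eq:defAB} of $A$,
\begin{equation*}
\mathcal V_\omega(F) = \alpha(\omega)\beta + \beta_+\beta_- = \frac{\alpha(\omega)}{A(\omega)}.
\end{equation*}
Proposition~\ref{pr:funct} says $\mathcal V_\omega(F) \geq \alpha(\omega)/\omega$, with equality precisely when $F$ is a positive multiple of $\e^{\omega x}$, i.e.\ when $\psi$ is constant. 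Since $\alpha(\omega)>0$, dividing yields $A(\omega) \leq \omega$ with equality iff $\psi$ is constant, and therefore $v_{\infty\infty} = \omega - A(\omega) \geq 0$ with the claimed equality case.

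There is essentially no obstacle here once Proposition~\ref{pr:funct} is available; the only subtlety is to notice that the test function $\e^{\omega x+\psi(x)}$ converts the constants $\alpha\beta+\beta_+\beta_-$ appearing in the denominator of $A$ into exactly the expression $\mathcal V_\omega(F)$, and that the equality case in Proposition~\ref{pr:funct} matches the condition of $\psi$ being constant. The conclusion $v_{\infty\infty} \equiv \omega - A(\omega)$ itself is already a definition from \eqref{eq:semiad-limit-velocity}, so no further computation is needed to conclude.
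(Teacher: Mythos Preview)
Your proposal is correct and follows essentially the same approach as the paper: the upper bound comes from Remark~\ref{rem:A}, and the lower bound with its equality case comes from applying Proposition~\ref{pr:funct} to $F(x)=\e^{\omega x+\psi(x)}$ via the identity $\mathcal V_\omega(F)=\alpha(\omega)/A(\omega)$. If anything, you spell out the identification $\alpha\beta+\beta_+\beta_-=\mathcal V_\omega(F)$ a bit more explicitly than the paper does.
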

\begin{proof}
Elementary properties of $A$ (see Remark~\ref{rem:A}) yield the upper bound 
in~\eqref{eq:semiad-bounds}.  The lower bound is a corollary of 
Proposition~\ref{pr:funct}.  Indeed, put $F(x) = \e^{\psi(x) + \omega x}$ and 
write
\begin{equation*}
A = \frac{\alpha(\omega)}{\mathcal V_\omega(F)}
,
\end{equation*}
where $\mathcal V_\omega$ is given by~\eqref{funct}.  According to 
Proposition~\ref{pr:funct}, we have
\begin{equation*}
A \le \frac{\alpha(\omega)}{\alpha(\omega)/\omega} = \omega,
\end{equation*}
and the lower bound in~\eqref{eq:semiad-bounds} is proved.  Moreover, the 
bound is attained if and only if $F(x) = C\e^{\omega x}$, with some constant 
$C > 0$, i.e., if and only if $\psi(x) \equiv \ln C$ is constant.
\end{proof}

Write the Log-Sobolev inequality associated with $g^*$ in the form
\begin{equation}
\label{eq:semiad-log}
\int_{S^1}
g \ln \frac{g}{g^*}
\cd x
\le
\frac 1\gamma
\int_{S^1}
g \left|\left(\ln \frac{g}{g^*}\right)_x \right|^2
\cd x
.
\end{equation}

Put
\begin{equation}
\label{eq:defM}
M_0 = \max_{x \in [0,1]} | \psi_x(x) |
,
\ M_1 = \max_{x \in [0,1]} \left| \frac{g^*_x(x)}{g^*(x)} \right|
.
\end{equation}

The following statements characterize the semiadiabatic limit of the tilting 
ratchets.
\begin{theorem}
\label{th:semiad-velocity}
Suppose that $\psi \in C^4(\R)$ is $1$-periodic and $\omega > 0$, $0 < \tau < 
T$; then the average drift velocity $v_{\infty}(\omega, T, \tau)$ satisfies
\begin{multline}
\label{eq:semiad-velocity}
|v_{\infty}(\omega, T, \tau) - v_{\infty\infty}(\omega)|
\le
(M_0 + v_{\infty\infty}(\omega))
\frac \tau T
\\
+
\frac{
2^{3/2} M_0 M_1^{1/2}
}{
\gamma
}
\omega^{1/2}
\frac{1}{T^{1/2}}
\left(
1 +
\frac{
1
}{
\e^{\gamma T (1 - \tau/T)} - 1
}
\right)
^{1/2}
.
\end{multline}
\end{theorem}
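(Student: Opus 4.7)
The plan is to follow the splitting strategy of Theorem~\ref{th:adiabatic-limit}, but with a crucial new ingredient: controlling the entropy spike produced during the short fast-tilt phase $(0,\tau)$. Let $g_\infty$ denote the $T$-periodic solution of~\eqref{eq:per-pr} with $F=-(\psi_x+H(t))$ furnished by Theorem~\ref{th:biper-sol}. Using~\eqref{eq:tilting-asymptotic-velocity}, splitting the $t$-integral at $\tau$, and inserting $v_{\infty\infty}(\omega) = -\int_0^1 \psi_x g^* \cd x$, I arrive at
\begin{equation*}
|v_\infty(\omega,T,\tau) - v_{\infty\infty}(\omega)| \le \frac{\tau}{T}\bigl(M_0 + v_{\infty\infty}(\omega)\bigr) + \frac{M_0}{T}\int_\tau^T \|g_\infty(\cdot,t)-g^*\|_{L^1(S^1)}\cd t,
\end{equation*}
which already reproduces the first term of~\eqref{eq:semiad-velocity}. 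On $(\tau,T)$ the evolution is precisely~\eqref{eq:semiad-defg}, so Corollary~\ref{cor:stable-attraction} supplies the decay $\E[g_\infty(\cdot,t)|g^*]\le R\,\e^{-\gamma(t-\tau)}$, where $R := \E[g_\infty(\cdot,\tau)|g^*]$, and the Csisz\'ar--Kullback inequality~\eqref{eq:cs-k} upgrades this to $\|g_\infty(\cdot,t)-g^*\|_{L^1(S^1)} \le \sqrt{2R}\,\e^{-\gamma(t-\tau)/2}$. Substituting and computing the integral, the second term of~\eqref{eq:semiad-velocity} will follow once I have a suitable upper bound on $R$.

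To bound $R$, I differentiate the entropy on $(0,\tau)$, where $g_\infty$ satisfies $(g_\infty)_t = \sigma(g_\infty)_{xx} + ((\psi_x + \Omega)g_\infty)_x$. Writing the right-hand side as the $\omega$-mode operator plus the extra spatially constant drift $(\Omega-\omega)(g_\infty)_x$ and repeating the computation~\eqref{eq:biper-sol-1} with $r = g_\infty/g^*$, I obtain
\begin{equation*}
\frac{\cd}{\cd t}\E[g_\infty|g^*] = -\sigma\ep[g_\infty|g^*] + (\Omega-\omega)\int_{S^1}(g_\infty)_x \ln r \cd x.
\end{equation*}
An integration by parts together with $\int_{S^1}(g_\infty)_x\cd x = 0$ collapses the last integral to $\int_{S^1} g_\infty\,g^*_x/g^*\cd x$, whose absolute value is at most $M_1$. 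Hence $\cd\E[g_\infty|g^*]/\cd t \le |\Omega-\omega|M_1$ on $(0,\tau)$, which together with the time-periodicity $g_\infty(\cdot,T)=g_\infty(\cdot,0)$ and the entropy decay on $(\tau,T)$ yields the self-bounding inequality
\begin{equation*}
R \le R\,\e^{-\gamma(T-\tau)} + |\Omega-\omega|M_1\tau.
\end{equation*}
Solving for $R$ and invoking the cancellation $|\Omega-\omega|\tau = \omega T$, which follows directly from the unbias condition~\eqref{eq:semiad-cond-unbiased}, gives $R \le \omega T M_1/(1-\e^{-\gamma(T-\tau)})$; substituting into the residual integral and simplifying produces exactly the second term of~\eqref{eq:semiad-velocity}.

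The main obstacle is precisely this entropy estimate on the fast-tilt phase: the extra drift amplitude $|\Omega|$ is of order $\omega T/\tau$, which diverges in the semiadiabatic scaling $\tau/T\to 0$, so a crude bound on the entropy growth would blow up. The identity $|\Omega-\omega|\tau = \omega T$ enforced by~\eqref{eq:semiad-cond-unbiased} is the precise cancellation that keeps the entropy spike bounded, so that the $L^1$-distance $\|g_\infty(\cdot,t)-g^*\|_{L^1(S^1)}$ relaxes fast enough on $(\tau,T)$ to make the residual integral vanish uniformly in the limit.
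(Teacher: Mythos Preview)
Your proof is correct and follows essentially the same approach as the paper: the same splitting at $t=\tau$, the same entropy-growth computation on $(0,\tau)$ leading to $\tfrac{\dd}{\dd t}\E[g_\infty|g^*]\le M_1|\Omega-\omega|$, and the same use of Csisz\'ar--Kullback plus exponential entropy decay on $(\tau,T)$. The one difference is that the paper packages the bound on $R=\E[g_\infty(\cdot,\tau)|g^*]$ as a separate lemma proved via a Schauder fixed-point argument on the set $\{u:\E[u|g^*]\le M_1\omega T/(\e^{\gamma(T-\tau)}-1)\}$, whereas you obtain the same bound more directly from the self-bounding inequality $R\le R\,\e^{-\gamma(T-\tau)}+M_1\omega T$ using the periodicity $g_\infty(\cdot,0)=g_\infty(\cdot,T)$; your route is a genuine simplification since the existence and uniqueness of $g_\infty$ are already known from Theorem~\ref{th:biper-sol}.
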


The proof will be given later in this section.

\begin{cor}
\label{cor:semiad-lim}
Under the hypothesis of Theorem~\ref{th:semiad-velocity}, for any $\omega > 
0$, we have
\begin{equation}
\lim_{
\substack{
T \to \infty \\
\tau/T \to 0
}}
v_{\infty}(\omega, T, \tau)
=
v_{\infty\infty}(\omega)
,
\end{equation}
the limit is uniform in $\omega \in (0, \omega_0]$ for any $\omega_0 > 0$, and 
the rate of convergence is $O(T^{-1/2} + \tau/T)$.
\end{cor}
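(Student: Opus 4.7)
The plan is to read the claim directly off the quantitative bound \eqref{eq:semiad-velocity} supplied by Theorem~\ref{th:semiad-velocity}, by verifying that each piece on its right-hand side tends to zero as $T\to\infty$ and $\tau/T\to 0$, and that this decay is uniform for $\omega\in(0,\omega_0]$. No new analytic estimate is needed; the work consists in bookkeeping around Theorem~\ref{th:semiad-velocity} plus a short continuity argument.

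First, the leading term $(M_0+v_{\infty\infty}(\omega))\tau/T$ is immediate: $M_0=\max|\psi_x|$ does not depend on $\omega$, and Proposition~\ref{pr:semiad-bounds} gives $0\le v_{\infty\infty}(\omega)<\omega\le\omega_0$, so this piece is dominated by $(M_0+\omega_0)\tau/T$, which is $O(\tau/T)$ uniformly in $\omega\in(0,\omega_0]$. For the second term, note that under the prescribed limit $\gamma T(1-\tau/T)\to\infty$, hence
\[
\left(1+\frac{1}{\e^{\gamma T(1-\tau/T)}-1}\right)^{1/2}
\]
stays bounded (and in fact tends to $1$) for $T$ sufficiently large. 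The surviving prefactor is $2^{3/2}M_0 M_1^{1/2}\omega^{1/2}/(\gamma T^{1/2})\le 2^{3/2}M_0 M_1^{1/2}\omega_0^{1/2}/(\gamma T^{1/2})$, so this contribution is $O(T^{-1/2})$, again uniformly in $\omega$, provided that the constants $M_1=M_1(\omega)$ and $\gamma=\gamma(\omega)$ are controlled uniformly on $(0,\omega_0]$.

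This last uniformity is the only point requiring a genuine argument, and it is where the main (if modest) obstacle lies. From the explicit representation \eqref{eq:st-per-sol} with $\sigma=1$ and the identity $g^*_x/g^*=A(\omega)/g^*-(\psi_x+\omega)$ of Remark~\ref{rem:tilted-velocity}, the normalized steady state $g^*=g^*(\,\cdot\,;\omega)$ depends continuously on $\omega$ in $C^1(S^1)$ and remains strictly positive, extending continuously at $\omega=0$ to the Boltzmann density $\e^{-\psi}/\!\int_0^1\e^{-\psi}\cd x$. Hence $M_1(\omega)$, $\min g^*$, and $\max g^*$ are uniformly bounded for $\omega\in[0,\omega_0]$, and the Log-Sobolev constant $\gamma$ is bounded below uniformly by some $\gamma_0>0$ because it depends monotonically only on these essential bounds.

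Collecting everything, for $T$ larger than a threshold depending only on $\gamma_0$ and $\omega_0$ one obtains
\[
|v_\infty(\omega,T,\tau)-v_{\infty\infty}(\omega)|\le C_{\omega_0}\bigl(\tau/T+T^{-1/2}\bigr)
\]
for every $\omega\in(0,\omega_0]$, which simultaneously yields the uniform limit and the advertised $O(T^{-1/2}+\tau/T)$ rate. Thus the corollary follows from Theorem~\ref{th:semiad-velocity} once the $\omega$-continuity of $g^*$ is observed; there is nothing delicate in the passage to the limit itself.
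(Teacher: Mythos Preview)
Your proof is correct and follows the same route as the paper's: both simply read the limit off the right-hand side of \eqref{eq:semiad-velocity}, using Proposition~\ref{pr:semiad-bounds} to bound $v_{\infty\infty}(\omega)\le\omega$ and observing that the exponential factor stays bounded. You are in fact more careful than the paper, which does not explicitly discuss the $\omega$-dependence of $M_1$ and $\gamma$; your continuity argument for $g^*(\,\cdot\,;\omega)$ on $[0,\omega_0]$ fills that gap and is the right way to secure the claimed uniformity.
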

\begin{proof}
It suffices to observe that by Proposition~\ref{pr:semiad-bounds} we have 
$v_{\infty\infty}(\omega) \le \omega$ and that
\begin{equation}
1 +
\frac{
1
}{
\e^{\gamma T (1 - \tau/T)} - 1
}
\end{equation}
remains bounded as $T \to \infty$ and $\tau/T \to 0$.
\end{proof}
Combining Corollary~\ref{cor:semiad-lim} and 
Proposition~\ref{pr:semiad-bounds}, we get the following corollary.
\begin{cor}
\label{cor:semiad-pos}
For any non-constant $1$-periodic $\psi \in C^4$ and any $\omega > 0$, we have 
$v_{\infty\infty}(\omega) > 0$, if $T$ is sufficiently large and $\tau/T$ is 
sufficiently small.
\end{cor}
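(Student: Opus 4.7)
The plan is to combine the two previous results (Proposition \ref{pr:semiad-bounds} and Corollary \ref{cor:semiad-lim}) in the most direct way possible; this statement is truly just a synthesis of what has already been established. I read the conclusion of the corollary as asserting $v_\infty(\omega, T, \tau) > 0$ for $T$ large and $\tau/T$ small (as $v_{\infty\infty}(\omega)$ itself does not depend on $T$ or $\tau$, so the qualifier about $T$ and $\tau/T$ would otherwise be meaningless), and my proposal targets that reading.

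First, I would invoke Proposition \ref{pr:semiad-bounds} to get that $v_{\infty\infty}(\omega) > 0$ strictly, using precisely the hypothesis that $\psi$ is non-constant. Set $v_0 := v_{\infty\infty}(\omega) > 0$.

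Next, I would invoke Corollary \ref{cor:semiad-lim}, which asserts that $v_\infty(\omega, T, \tau) \to v_{\infty\infty}(\omega) = v_0$ as $T \to \infty$ and $\tau/T \to 0$. Consequently, there exist $T_0 > 0$ and $\delta > 0$ such that
\begin{equation*}
|v_\infty(\omega, T, \tau) - v_0| < v_0 / 2 \quad \text{whenever } T \ge T_0 \text{ and } \tau/T \le \delta.
\end{equation*}
For such $T$ and $\tau$, the reverse triangle inequality yields $v_\infty(\omega, T, \tau) > v_0/2 > 0$, which is the desired conclusion.

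There is no real obstacle here — the substantive work has already been done in Proposition \ref{pr:funct} (which powers the strict lower bound in Proposition \ref{pr:semiad-bounds}) and in the quantitative estimate of Theorem \ref{th:semiad-velocity} (which powers the convergence in Corollary \ref{cor:semiad-lim}). The only mild point to check is that the convergence in Corollary \ref{cor:semiad-lim} is genuinely along the joint limit $T \to \infty$ with $\tau/T \to 0$, not just along iterated limits; but this is already built into the statement of that corollary via the explicit rate $O(T^{-1/2} + \tau/T)$, so no additional argument is needed.
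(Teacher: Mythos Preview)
Your proposal is correct and matches the paper's own approach exactly: the paper simply says ``Combining Corollary~\ref{cor:semiad-lim} and Proposition~\ref{pr:semiad-bounds}, we get the following corollary,'' with no further argument. Your reading of the conclusion as $v_\infty(\omega,T,\tau)>0$ (rather than $v_{\infty\infty}(\omega)>0$) is also the right one, for precisely the reason you give, and is confirmed by the paper's subsequent gloss that ``the semiadiabatic transport goes in the positive/negative direction according to the sign of $\omega$.''
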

In other words, Corollary~\ref{cor:semiad-pos} means that \emph{the 
semiadiabatic transport goes in the positive/negative direction according to 
the sign of $\omega$}.

Before proving Theorem~\ref{th:semiad-velocity}, we introduce some notations 
and prove a lemma.  Denote by
\begin{equation*}
\re[g] = \E[g | g^*]
\end{equation*}
the relative entropy with respect to $g^*$, and let
\begin{equation*}
\ep[g] =
\int_{S^1}
g \left|\left(\ln \frac{g}{g^*}\right)_x \right|^2
\cd x
\end{equation*}
be the corresponding entropy production term, then the Log-Sobolev 
inequality~\eqref{eq:semiad-log} can be written as
\begin{equation}
\label{eq:semiad-varlog}
\re[g] \le \frac 1 \gamma \ep[g]
.
\end{equation}

In the case of semiadiabatic tilting, equation~\eqref{eq:per-pr} splits into
\begin{gather}
g_t - g_{xx} - ((\psi_x + \Omega) g)_x = 0,
\quad (t, x) \in (0, \tau) \times S^1;
\label{eq:semiad-per1}
\\
g_t - g_{xx} - ((\psi_x + \omega) g)_x = 0,
\quad (t, x) \in (\tau, T) \times S^1
.
\label{eq:semiad-per2}
\end{gather}
We consider \eqref{eq:semiad-per1}--\eqref{eq:semiad-per2} in the class
\begin{equation}
\label{eq:semiad-pdf}
g \ge 0; \quad
\int_{S^1} g(x, t) \cd x = 1
.
\end{equation}
As usual, $g_\infty$ denotes the unique $T$-periodic in $t$ solution of 
\eqref{eq:semiad-per1}--\eqref{eq:semiad-pdf}, existing due to 
Theorem~\ref{th:biper-sol}.  Note that the function $g_\infty$ itself 
implicitly depends on the parameters of tilting $T$, $\tau$, and $\omega$.
\begin{lemma}
\label{lem:semiad-l1}
We have
\begin{equation}
\label{eq:semiad-l1}
\re[g_\infty(\cdot, \tau)]
\le
M_1 \omega T
\left(
1+
\frac{1}{
\e^{\gamma T(1-\tau/T)} - 1
}
\right)
.
\end{equation}
\end{lemma}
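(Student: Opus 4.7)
The plan is to estimate the entropy $\re[g_\infty(\cdot, t)]$ on the two phases $[0,\tau]$ and $[\tau, T]$ separately, and then close the loop using the time-periodicity $g_\infty(\cdot, 0) = g_\infty(\cdot, T)$.

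On the long phase $(\tau, T)$ the equation \eqref{eq:semiad-per2} is precisely the one for which $g^*$ is the stationary solution, so Corollary~\ref{cor:stable-attraction} yields the decay
\begin{equation*}
\re[g_\infty(\cdot, T)] \le \re[g_\infty(\cdot, \tau)] \, \e^{-\gamma(T-\tau)}.
\end{equation*}
Together with periodicity, this gives $\re[g_\infty(\cdot, 0)] \le \re[g_\infty(\cdot, \tau)] \, \e^{-\gamma(T-\tau)}$, so it remains to control how much $\re$ can grow during the short phase $(0,\tau)$.

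On $(0,\tau)$ the equation is \eqref{eq:semiad-per1} with the large tilt $\Omega$, for which $g^*$ is \emph{not} stationary, and this is the main obstacle: the entropy with respect to $g^*$ is no longer monotone decreasing. I would compute $\frac{d}{dt}\re[g_\infty]$ exactly as in \eqref{eq:biper-sol-1}, setting $r = g_\infty / g^*$ and integrating by parts; using the identity $g^*_x + (\psi_x + \omega) g^* = A(\omega)$ from Remark~\ref{rem:tilted-velocity}, the extra tilt $\Omega - \omega$ produces a single defect term, and one gets
\begin{equation*}
\frac{d}{dt}\re[g_\infty] = -\ep[g_\infty] + (\Omega - \omega) \int_{S^1} \frac{g^*_x}{g^*}\, g_\infty \cd x.
\end{equation*}
The defect term is bounded in absolute value by $|\Omega - \omega|\, M_1$ (using mass conservation and the definition \eqref{eq:defM} of $M_1$), and the non-bias condition \eqref{eq:semiad-cond-unbiased} gives $|\Omega - \omega|\,\tau = \omega T$. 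Dropping the nonpositive $-\ep$ term and integrating from $0$ to $\tau$ therefore yields
\begin{equation*}
\re[g_\infty(\cdot, \tau)] \le \re[g_\infty(\cdot, 0)] + M_1 \omega T.
\end{equation*}

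Finally, combining this with the inequality $\re[g_\infty(\cdot, 0)] \le \re[g_\infty(\cdot, \tau)]\, \e^{-\gamma(T-\tau)}$ from the long phase and solving for $\re[g_\infty(\cdot,\tau)]$ gives
\begin{equation*}
\re[g_\infty(\cdot, \tau)] \le \frac{M_1 \omega T}{1 - \e^{-\gamma(T-\tau)}} = M_1 \omega T \left(1 + \frac{1}{\e^{\gamma T (1 - \tau/T)} - 1}\right),
\end{equation*}
which is exactly \eqref{eq:semiad-l1}.
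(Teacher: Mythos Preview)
Your argument is correct, and the entropy computations on each phase agree exactly with the paper's. The one difference is in how you close the loop: you invoke the periodicity $g_\infty(\cdot,0)=g_\infty(\cdot,T)$ directly and solve the resulting linear inequality for $\re[g_\infty(\cdot,\tau)]$, whereas the paper instead runs a Schauder fixed-point argument (showing the period map $\mathcal T$ preserves the sublevel set $\{\re[u]\le R\}$ with $R=M_1\omega T/(\e^{\gamma(T-\tau)}-1)$, hence has a fixed point there, which by uniqueness must be $g_\infty(\cdot,0)$). Your route is more elementary and only requires the observation that $\re[g_\infty(\cdot,\tau)]$ is a priori finite, which is immediate from the smoothness and positivity of $g_\infty$ together with the bounds on $g^*$; the paper's detour avoids even this mild check, but at the price of a fixed-point theorem that is not really needed once $g_\infty$ is already in hand.
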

\begin{proof}
We start with an \emph{a priori} estimate for a solution $g$ 
of~\eqref{eq:semiad-per1}--\eqref{eq:semiad-pdf}.  First we estimate the 
entropy for $t \in [0, \tau]$.  Put $r = g/g^*$, then $r_t = g_t/g^*$, and 
$(\ln r)_t = g_t/g$.  We have:
\begin{equation*}
\frac{\dd}{\dd t} \re[g]
=
\frac{\dd}{\dd t}
\int_{S^1}
g \ln r
\cd x
=
\int_{S^1}
g_t \ln r
\cd x
+
\int_{S^1}
g_t
\cd x
.
\end{equation*}
Plugging in~\eqref{eq:semiad-per1}, using the conservation of 
mass~\eqref{eq:semiad-pdf} and integrating by parts, we proceed as follows:
\begin{multline*}
\frac{\dd}{\dd t} \re[g]
=
\int_{S^1}
(g_{xx} + ((\psi_x + \Omega) g)_x) \ln r
\cd x
\\
=
-
\int_{S^1}
g
\frac{r_x^2}{r^2}
\cd x
+
\int_{S^1}
\frac{g}{r}
(r_{xx} - (\psi_x + \Omega) r_x)
\cd x
\\
=
-
\ep[g]
+
\int_{S^1}
g^*
(r_{xx} - (\psi_x + \Omega) r_x)
\cd x
\\
\le
\int_{S^1}
r
(g^*_{xx} + ((\psi_x + \Omega)g^*)_x)
\cd x
.
\end{multline*}
As $g^*$ solves~\eqref{eq:semiad-defg}, we obviously have
\begin{equation*}
g^*_{xx} + ((\psi_x + \Omega)g^*)_x = (\Omega - \omega) g^*_x,
\end{equation*}
whence
\begin{equation*}
\frac{\dd}{\dd t} \re[g]
\le
(\Omega - \omega)
\int_{S^1}
r
g^*_x
\cd x
=
(\Omega - \omega)
\int_{S^1}
g
\frac{g^*_x}{g^*}
\cd x
,
\end{equation*}
and using the conservation of mass once again, we obtain
\begin{equation*}
\frac{\dd}{\dd t} \re[g]
\le
M_1 |\Omega - \omega|
.
\end{equation*}
This yields
\begin{equation*}
\re[g(\cdot, \tau)]
\le
M_1 \tau |\Omega - \omega|
+
\re[g(\cdot, 0)]
\end{equation*}
whenever $\re[g(\cdot, 0)]$ exists.  Using~\eqref{eq:semiad-cond-unbiased}, we 
can write the last inequality in the form
\begin{equation}
\label{eq:semiad-l11}
\re[g(\cdot, \tau)]
\le
M_1 \omega T
+
\re[g(\cdot, 0)]
.
\end{equation}

Now for $t \in [\tau, T]$ the function $g$ solves 
equation~\eqref{eq:semiad-per2} and $g^*$ is the stationary solution of the 
same equation, so by Corollary~\ref{cor:stable-attraction}
\begin{equation*}
\re[g] \le \re[g(\cdot, \tau)] \e^{-\gamma (t - \tau)}
.
\end{equation*}
From this inequality and~\eqref{eq:semiad-l11} we get
\begin{equation}
\label{eq:semiad-l12}
\re[g(\cdot, T)] \le (M_1 \omega T + \re[g(\cdot, 0)] )\e^{-\gamma (T - \tau)}
.
\end{equation}

Denote by $\mathcal T$ the operator taking a probability density $u \in 
L^1(S^1)$ to $g(\cdot, T)$, where $g$ 
solves~\eqref{eq:semiad-per1}--\eqref{eq:semiad-per2} with the initial 
condition $g|_{t = 0} = u$, and put
\begin{equation}
R
=
\frac{
M_1 \omega T
}{
\e^{\gamma (T-\tau)} - 1
}
.
\end{equation}
It follows from~\eqref{eq:semiad-l12} that $\mathcal T$ maps the set
\begin{equation*}
X =
\{
u \in W_2^1(S^1) \cap \{\text{probability densities}\}
\mid
\re[u] \le R
\}
\end{equation*}
into itself.  Moreover, $X$ is a convex closed subset of $W_2^1(S^1)$ and by 
parabolic regularity $\mathcal T \colon X \to X$ is continuous and $\mathcal 
T(X)$ is precompact in $W_2^1(S^1)$.  Hence by the Schauder fixed point 
theorem $\mathcal T$ has a fixed point in $X$, which is the initial data for a 
time-periodic solution of \eqref{eq:semiad-per1}--\eqref{eq:semiad-per2}.  
However, such periodic solution is unique, so the fixed point found above 
coincides with $g_\infty(\cdot, 0)$.  Therefore, $\re [g_\infty(\cdot, 0)] \le 
R$ and \eqref{eq:semiad-l1} follows from \eqref{eq:semiad-l11}.
\end{proof}
\begin{proof}[Proof of Theorem~\ref{th:semiad-velocity}.]
We have:
\begin{multline}
\label{eq:semiad-velocity-1}
| v_{\infty}(\omega, T, \tau) - v_{\infty\infty}(\omega) |
\\
=
\left|
-
\frac 1T
\int_0^T
\int_{S^1}
\psi_x g_\infty
\cd x
\cd t
+
\int_{S^1}
\psi_x
g^*
\cd x
\right|
\\
\le
\frac 1T
\left|
\int_0^\tau
\int_{S^1}
\psi_x g_\infty
\cd x
\cd t
\right|
\\
+
\frac 1T
\left|
\int_\tau^T
\int_{S^1}
\psi_x (g_\infty - g^*)
\cd x
\cd t
\right|
+
\frac \tau T
\left|
\int_{S^1}
\psi_x
g^*
\cd x
\right|
\\
\le
\frac \tau T
(M_0 + v_{\infty\infty}(\omega))
+
\frac{M_0}{T}
\int_\tau^T \| g_\infty - g^* \|_{L^1(S^1)}
\cd t
\end{multline}
(here we have used the conservation of mass for $g_\infty$).  We now estimate 
the last term.  Using the Csiszár--Kullback inequality and the 
attraction~\eqref{eq:stable-attraction}, we have
\begin{multline*}
\frac{M_0}{T}
\int_\tau^T \| g_\infty - g^* \|_{L^1(S^1)}
\cd t
\le
\frac{M_0}{T}
\int_\tau^T \sqrt{2 \re [g_\infty]}
\cd t
\\
\le
\frac{M_0}{T}
\sqrt{2 \re [g_\infty(\cdot, \tau)]}
\int_\tau^T \e^{- \gamma (t - \tau)/2}
\cd t
\le
\frac 1T
\frac{2M_0}{\gamma}
\sqrt{2 \re [g_\infty(\cdot, \tau)]}
.
\end{multline*}
Estimating the entropy~$\re [g_\infty(\cdot, \tau)]$ by means 
of~\eqref{eq:semiad-l1}, we have
\begin{equation*}
\frac{M_0}{T}
\int_\tau^T \| g_\infty - g_* \|_{L^1(S^1)}
\cd t
\le
\frac 1T
\frac{2^{3/2}M_0}{\gamma}
\left(
M_1 \omega T
\left(
1+
\frac{1}{
\e^{\gamma T(1-\tau/T)} - 1
}
\right)
\right)^{\frac 12}
.
\end{equation*}
Combining this inequality with~\eqref{eq:semiad-velocity-1}, we 
obtain~\eqref{eq:semiad-velocity}.
\end{proof}


We finish this section by considering the travelling potential  
\begin{equation}
\label{eq:trav}
\Psi(x, t) = \psi(x-\omega t)
,
\end{equation}
where $\psi$ is $1$-periodic and $\omega$ is a constant.  The corresponding 
ratchet model  \cite{bdk08,bdk09} coincides with the one for the stochastic 
Stokes' drift \cite{bena00,jl98}.  In \cite{bdk08} it was conjectured that the 
average drift velocity of this ratchet is positive for positive $\omega$ 
(cf.~also \cite[Section 4.4.1]{rei02}).  It is straightforward to check (cf. 
\cite{bdk09}) that the corresponding periodic solution of \eqref{eq:per-pr} 
with $F = - \Psi_x$ satisfying~\eqref{eq:per-mass-conservation} is
$$g_\infty=g^*(x-\omega t),$$
where $g^*$ is defined above in this section.  Then the corresponding bulk 
velocity is
\begin{multline}
\label{eq:trav-limit-velocity}
v_{\infty}
=
-
\frac 1T\int_0^T\int_0^1
\psi_x (x-\omega t)g^*(x-\omega t)
\cd x \cd t
\\= -
\int_0^1
\psi_x (x)g^*(x)
\cd x \equiv
\omega - A(\omega)
.
\end{multline} The conjecture of \cite{bdk08} (for any non-constant $\psi$) 
thus follows from our Proposition \ref{pr:semiad-bounds}.

\section {Small diffusion coefficient} \label{sdc}

We are now interested in applying the bulk velocity representation \eqref{eq:def-asymptotic-velocity} for finding sufficient conditions of transport in the case of small~$\sigma$. 

The result concerns generic piecewise smooth potentials $\Psi(x,t)$ which are $T$-periodic in $t$ and whose $x$-derivatives are $1$-periodic in~$x$. 

\begin{theorem} \label{odeth} Assume that the ODE  \begin{equation}\label{ode1} y'(t)=F(y(t),t) 
\end{equation} (where $F=-\Psi_x$ is as in Theorem \ref{th:biper-sol}) 
has no $T$-periodic solutions.  Then $v_\infty\neq 0$ for sufficiently small 
$\sigma$.  Moreover,
\begin{equation}
\label{eq:ineq1} 
\operatorname{sign}v_\infty=\mathrm{sign}(y(T)-y(0))
\end{equation}
for every solution $y$ to \eqref{ode1}.
\end{theorem}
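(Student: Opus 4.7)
My plan is to perform a vanishing-diffusion analysis of
\begin{equation*}
v_\infty(\sigma) = \frac{1}{T}\int_0^T \int_0^1 F(x,t)\, g_\infty^{(\sigma)}(x,t)\cd x\cd t = \int F \cd m_\sigma,
\end{equation*}
where $g_\infty^{(\sigma)}$ is the bi-periodic solution from Theorem~\ref{th:biper-sol} and $m_\sigma := T^{-1} g_\infty^{(\sigma)}\cd x\cd t$. Each $m_\sigma$ is a probability measure on the compact space $S^1\times(\R/T\mathbb{Z})$, so the family is weak-$*$ precompact, and any subsequence $\sigma_k\to 0$ has a further subsequence along which $m_{\sigma_k}\to\mu$ weakly-$*$ for some probability measure $\mu$. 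Mass conservation ($\int_0^1 g_\infty^{(\sigma)}(\cdot,t)\cd x = 1$ for every $t$) forces the time-marginal of $\mu$ to be $dt/T$, so by disintegration $d\mu = d\nu_t(x)\otimes dt/T$ with each $\nu_t$ a probability measure on $S^1$.

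The next step is to identify $\{\nu_t\}$ as the orbit of a measure invariant under the Poincar\'e return map. Passing to the limit in the weak formulation of \eqref{eq:per-pr},
\begin{equation*}
\int_0^T\int_0^1 g_\infty^{(\sigma)}\bigl(\phi_t + \sigma\phi_{xx} + F\phi_x\bigr)\cd x\cd t = 0
\end{equation*}
(for $\phi$ smooth, $T$-periodic in $t$ and $1$-periodic in $x$), the $\sigma\phi_{xx}$ term drops out and $\mu$ becomes a distributional solution of $\mu_t + (F\mu)_x = 0$. This says that $\nu_t$ is the pushforward of $\nu_0$ by the flow of \eqref{ode1}, piecewise on the intervals $[t_{i-1},t_i]$ where $F$ is smooth; the $T$-periodicity $\nu_T = \nu_0$ then says exactly that $\nu_0$ is invariant under the Poincar\'e return map $P$ of \eqref{ode1}.

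The final step is a rotation-number computation. By Fubini and the identity $y(T) - y_0 = \int_0^T F(y(s),s)\cd s$ for the solution of \eqref{ode1} with $y(0) = y_0$,
\begin{equation*}
\lim_{k\to\infty} v_\infty(\sigma_k) = \int F\cd\mu = \frac{1}{T}\int_0^T\int_{S^1} F\cd\nu_t\cd t = \frac{1}{T}\int_{S^1}\bigl(\widetilde P(y_0) - y_0\bigr)\cd\nu_0(y_0) = \frac{\rho(P)}{T},
\end{equation*}
where $\widetilde P\colon \R\to\R$ lifts $P$ (so $\widetilde P - \mathrm{id}$ is $1$-periodic, hence defined on $S^1$); the last equality is the classical identity expressing the Poincar\'e rotation number $\rho(P)$ as the $\nu_0$-average of $\widetilde P - \mathrm{id}$ for any $P$-invariant probability measure, a one-line consequence of $\widetilde P^n - \mathrm{id} = \sum_{k=0}^{n-1}(\widetilde P-\mathrm{id})\circ \widetilde P^k$ together with the pointwise limit $(\widetilde P^n - \mathrm{id})/n\to\rho(P)$. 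The hypothesis that \eqref{ode1} has no $T$-periodic solutions means $P$ has no fixed points, so $\widetilde P - \mathrm{id}$ is nowhere zero on $\R$ and hence has constant sign; therefore $\rho(P)\ne 0$ and $\operatorname{sign}\rho(P) = \operatorname{sign}(\widetilde P(y_0) - y_0) = \operatorname{sign}(y(T) - y(0))$ for every solution of \eqref{ode1}. Since every subsequential limit yields the same value $\rho(P)/T$, we conclude $v_\infty(\sigma)\to\rho(P)/T\ne 0$, and both assertions of the theorem follow for sufficiently small $\sigma$. The principal technical obstacle is the convergence $\int F\cd m_{\sigma_k}\to\int F\cd\mu$ in view of the time-discontinuities of $F$; this is handled by splitting the time integral over the smoothness intervals $[t_{i-1},t_i]$ on which $F$ is continuous and weak-$*$ convergence applies directly.
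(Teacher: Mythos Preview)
Your argument is correct and in fact proves more than the paper does: you establish the full limit $\lim_{\sigma\to 0} v_\infty(\sigma)=\rho(P)/T$, which the paper only states as a conjecture in the remark following Theorem~\ref{odeth}.

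The two proofs differ in their compactness mechanism and in how they extract the sign. The paper argues by contradiction, uses the Aubin--Lions--Simon lemma to get strong convergence of $g_{\infty n}$ in $C([0,T];H^{-1}(S^1))$, then represents the limit through an explicit antiderivative $z_\infty$ transported along characteristics and reads off the sign from the monotonicity of $z_\infty$ together with the fact that $x\mapsto s(x,T)$ has no fixed points. You instead pass to weak-$*$ limits of the space--time probability measures $m_\sigma$, disintegrate with respect to the (Lebesgue) time marginal, identify the slices $\nu_t$ as the flow-pushforwards of a $P$-invariant measure $\nu_0$ via uniqueness for the continuity equation with Lipschitz velocity, and then invoke the classical identity $\int_{S^1}(\widetilde P-\mathrm{id})\,d\nu_0=\rho(P)$. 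Your route is more conceptual and yields the exact limiting value; the paper's route is more self-contained, avoiding the rotation-number and invariant-measure formalism. Your handling of the time discontinuities of $F$ via the Portmanteau theorem (the slices $\{t=t_i\}$ are $\mu$-null because the time marginal is Lebesgue) is the correct way to close the gap; this is also where the paper's stronger $C([0,T];H^{-1})$ convergence does a bit of work for free. One minor point worth making explicit in your write-up: the disintegration gives $\nu_t$ only for a.e.\ $t$, and you should note that the transport equation forces a weakly continuous representative (since $t\mapsto\int\phi\,d\nu_t$ has bounded derivative for each $\phi\in C^1(S^1)$), after which the pushforward identification and the periodicity $\nu_T=\nu_0$ are unambiguous.
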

\begin{proof}
Given a sequence $\sigma_n\to 0$, consider the corresponding $T$-periodic 
solutions $g_{\infty n}$ to~\eqref{eq:per-pr} 
satisfying~\eqref{eq:per-mass-conservation}, and the bulk velocities 
$v_{\infty n}$ defined by~\eqref{eq:def-asymptotic-velocity}.  If the theorem 
is not true, $\{\sigma_n\}$ can be chosen in such a way that every $v_{\infty 
n}$ violates~\eqref{eq:ineq1}.  Consider the following  auxiliary equation
\begin{equation}
\label{eq:per-pr0}
\zeta_t + (F \zeta)_x = 0, \quad (t, x) \in (0, \infty) \times \R.\end{equation}
It is easy to see that the solution $\zeta$ of \eqref{eq:per-pr0} can be 
expressed in the form $\zeta(x,t)=[z(s(x,t))]_x$, where $z_x(x)=\zeta(x,0)$, 
and $s$ solves the problem
\begin{equation} \label{eq:s}
\left\{
\begin{array}{l}
s_t +F s_x= 0, \\
s(x,0) = x, \quad x \in \R. 
\end{array}
\right.
\end{equation}
The sequence of corresponding positive periodic solutions $\{g_{\infty n}\}$ 
to~\eqref{eq:per-pr}, \eqref{eq:per-mass-conservation} is bounded in 
$L^\infty(0,T;L^1(S^1))$, and $\{\fr {g_{\infty n}}{t}\}$ is bounded in 
$L^\infty(0,T;W^{-2}_1(S^1))$, so, by the Aubin--Lions--Simon lemma, without 
loss of generality, $g_{\infty n}$ converges strongly in 
$C([0,T];H^{-1}(S^1))$ to some $\zeta_\infty$.  Due to 
\eqref{eq:def-asymptotic-velocity}, for large $n$, the sign of $v_{\infty n}$ 
coincides with the sign of \begin{equation}
\label{eq:def-asymptotic-0}
v_{\infty0}
=
 \frac 1T
\int_0^T
\langle 
\zeta_\infty(\cdot, t), F(\cdot,t)\rangle_{H^{-1}(S^1)\times H^{1}(S^1)}
\cd x
\cd t,
\end{equation} provided $v_{\infty0}\neq 0$. 

Let $z_\infty\in L^{2}_{loc}(\R)$ be such that $ 
(z_\infty)_x(x)=\zeta_\infty(x,0)$ (obviously, $\zeta_\infty(\cdot,0)$ can be 
considered as a distribution on $\R$).  Since $\langle g_{\infty 
n}(\cdot,0),1\rangle=1$, in the limit we have $\langle 
\zeta_{\infty}(\cdot,0),1\rangle=1,$ so $z_\infty\big|_{[0,1]}$ is a 
non-constant function.  The functions $g_{\infty n}$ are positive by Theorem 
\ref{th:biper-sol}, so $\zeta_{\infty}(\cdot,0)\geq 0$ in the sense of 
distributions on $S^1$, whence $z_\infty$ is essentially a non-decreasing 
function. 

Passing to the limit, we see that $\zeta_\infty$ solves~\eqref{eq:per-pr0} in 
the sense of distributions.  Hence, 
\begin{equation}\zeta_\infty(x,t)=[z_\infty(s(x,t))]_x,\end{equation} and 
\begin{multline}\label{eq:z3}[z_\infty(s(x,t))]_t+F(x,t)\zeta_\infty(x,t)\\=[z_\infty(s(x,t))]_t+F(x,t)[z_\infty(s(x,t))]_x=0
\end{multline}
in the sense of distributions.  The second equality is obvious if $z_\infty$ 
is smooth, and the general case follows from the dominated convergence 
theorem.

But
\eqref{eq:def-asymptotic-0} and \eqref{eq:z3} yield 
\begin{multline} 
\label{eq:def-asymptotic-1}
v_{\infty0}=-\frac 1T
\int_0^T
\int_{0}^{1}
[z_\infty(s(x,t))]_t
\cd x
\cd t \\ =\frac {
\int_{0}^{1}(
z_\infty(x)-z_\infty(s(x,T))
\cd x
} T. 
\end{multline}
Solving \eqref{eq:s} by the method of characteristics we infer that $x=y(T)$, 
where $y(t)$ is the solution of \eqref{ode1} with the initial condition 
$y(0)=s(x,T)$.  Since $S^1$ is compact and \eqref{ode1} has no $T$-periodic 
solutions, the difference $$d(y)=y(T)-y(0)$$ is separated from zero, and its 
sign does not change for all solutions of \eqref{ode1}.  Consequently, 
$v_{\infty0}\neq 0$ (otherwise $z_\infty\big|_{[0,1]}$ would be a constant).  
We conclude that $v_{\infty 0}$ and $v_{\infty n}$ (for large $n$) have the 
same sign as $d(y)$, reaching a contradiction.
\end{proof} 

\begin{remark}
Observe that \eqref{ode1} has no $T$-periodic solutions if and only if there 
exists an unbounded trajectory or, in other words, if and only if the 
corresponding Poincar\'{e}'s rotation number \cite{cl55} is nonzero.  Hence, 
the sign of the rotation number coincides with the sign of $v_{\infty 0}$.  A 
related observation was made in \cite{sg06} for diffusion-free tilting 
ratchets.  Our conjecture is that $\lim_{\sigma\to 0}{v_\infty}=r/T$, where 
$r$ is the rotation number (cf.\ \cite{sg06}).
\end{remark}
\begin{remark}
Existence of periodic orbits is unstable with respect to perturbations of $F$, 
thus, in a general position, $v_\infty$ is non-zero for small $\sigma$.  
Unfortunately, as R.~Ortega and F.~Zanolin pointed out in a personal 
communication, there are no criteria of the form that `some set of assumptions 
on $F$ implies non-existence of $T$-periodic solutions to \eqref{ode1}'.  In 
\cite{o87}, the non-existence of periodic solutions to \eqref{ode1} with a 
tilting potential of the form $F_{sin}(y,t)=H(t)-\sin y$, where $H$ is 
$T$-periodic and satisfies \eqref{eq:unbiasedH}, was discovered.  There, it 
was observed that this phenomenon contrasts with the behaviour of the equation
\begin{equation}
\label{ode2o}
y''(t)=F_{sin}(y(t),t),
\end{equation}
which always admits $T$-periodic solutions.  However, for this particular 
class of tilting potentials there is some hope to get a criterion using a 
trick from \cite{ot00}, transforming \eqref{ode1} into a Ricatti equation.  
The discrete analogue of the problem produces Arnold tongues.
\end{remark}

\section {Multi-state models} \label{msm}

We now consider another class of ratchet models, where the particles can be in 
several states, and the total amount of particles is fixed.  Particles in 
different states are sensitive to different time-independent potentials.  The 
particles can randomly change their states with rates $\nu_{ij}(x)\geq 0$ 
(from $j$-th state to $i$-th state; $i,j=1,\dots,N$; $i\neq j$), calling forth 
the bulk transport.  This leads to the following general Cauchy problem for a 
system of Fokker--Planck-type equations:
\begin{equation}
\label{eq:general-system}
\left\{
\begin{array}{l}
(\rho_i)_t - \sigma(\rho_i)_{xx} - ((\Psi_i)_x \rho_i)_x + \sum\limits_{j,\ j\neq i}^{}\nu_{ji} \rho_i  = \sum\limits_{j,\ j\neq i}^{}\nu_{ij} \rho_j, \\
\rho_i(x,0) = \rho_{i0}(x),
\end{array}
\right.
\end{equation}
which we again consider for all $
x \in \R$ and $t > 0$.

Here $\Psi_i(x)$ are given $C^{4}$-regular potentials, ${\rho_i}_0$ are given initial mass
distributions, and $\sigma$ is the diffusion coefficient (for definiteness, we set it to be the same for all states).  We assume that $(\Psi_i)_x(x)$ and $\nu_{ij}(x)$ are $1$-periodic  and that
$\rho_{i0}(x)$ satisfy the requirements
\begin{equation}
\label{eq:ic-requirements-s}
\rho_{i0}(x) \ge 0,\ \sum\limits_{i=1}^N\int\limits_{-\infty}^{\infty} \rho_{i0} (x) \cd x = 1,\ \int\limits_{-\infty}^{\infty}|x| \rho_{i0}(x) \cd x < 
\infty.
\end{equation}

Standard arguments show that \eqref{eq:general-system} has a unique solution 
for any continuous vector function $\rho_0(x)=(\rho_{i0})$ satisfying 
\eqref{eq:ic-requirements-s}; moreover, for any $t > 0$, the components 
$\rho_i(x,t)$ of the solution vector $\rho(x,t)$ are positive.  Moreover, the 
classical estimates \cite{e69} for the fundamental matrix of solutions to 
\eqref{eq:general-system} imply
\begin{equation}
\label{eq:limit-s}
\lim_{x \to \pm\infty}
|x|(|\rho(x, t)|+|\rho_x(x,t)|)
= 0
.
\end{equation}
Let \begin{equation}
\label{eq:total-mass-s}
\tilde\rho(x,t)
=
\sum\limits_{i=1}^N\rho_i(x, t)\end{equation} be the total density function.
Adding the equations of \eqref{eq:general-system} gives
\begin{equation} \label{eq:general-sum-s}
\tilde\rho_t - \sigma\tilde\rho_{xx} - \sum\limits_{i=1}^N((\Psi_i)_x \rho_i)_x =0.
\end{equation}
As in Section \ref{sectionbv}, \eqref{eq:limit-s} and \eqref{eq:general-sum-s} yield  conservation of the total mass
\begin{equation}
\label{eq:mass-conservation-s}
\int_{-\infty}^\infty \tilde\rho(x,t) \, \dd x = 1,
\end{equation}
and finiteness of the centre of mass
\begin{equation}
\label{eq:centre-of-mass-s}
\bar x(t)
=
\int_{-\infty}^{\infty}
x \tilde\rho(x, t)
\,\dd x
\end{equation}
for any $t$.  
Similarly to Section \ref{sectionbv}, we are interested in the properties of the velocity of the 
centre of mass $\bar x(t)$.

We consider the following auxiliary problem on $S^1\times (0, \infty)$:
\begin{equation} \label{eq:per-pr-s}
\left\{
\begin{array}{l}
(g_i)_t - \sigma(g_i)_{xx} - ((\Psi_i)_x g_i)_x + \sum\limits_{j,\ j\neq i}^{}\nu_{ji} g_i  = \sum\limits_{j,\ j\neq i}^{}\nu_{ij} g_j, \\
g_i(x) > 0, \ \sum\limits_{i=1}^N\int_{S^1}g_i(x) \cd x = 1
.
\end{array}
\right.
\end{equation}

Observe that if a vector $\rho$ with positive components solves 
\eqref{eq:general-system}, then 
\begin{equation}
\label{eq:rho-g-s}
g(x, t)
=
\sum_{k = -\infty}^\infty
\rho(x + k, t)
\end{equation}
solves~\eqref{eq:per-pr-s}.  The convergence of this series follows from the 
properties of the fundamental matrix of solutions to 
\eqref{eq:general-system}. 

Then the velocity of the centre of mass is
\begin{multline}
\tilde v(t) := \fr{\bar x}{t}(t) =
\int_{-\infty}^{\infty}
x \tilde\rho_t(x, t) \cd x
\\
=\int_{-\infty}^{\infty}
x \left(\sigma \tilde\rho_{xx} + \sum\limits_{i=1}^N((\Psi_i)_x \rho_i)_x\right)
\cd x
\\
=-\int_{-\infty}^{\infty}
\left(\sigma \tilde\rho_{x} + \sum\limits_{i=1}^N(\Psi_i)_x \rho_i\right)
\cd x
\\=
-\sum\limits_{i=1}^N
\int_{0}^{1}
(\Psi_i)_x g_i
\,\dd x
.
\end{multline}

We need the following result on existence of a unique attractor 
for~\eqref{eq:per-pr-s}.

\begin{prop}
\label{th:biper-sol-s}
There exists a unique regular stationary solution vector 
$g_\infty(x)=(g_{i\infty})$ to~\eqref{eq:per-pr-s}.  Moreover, there exists 
$\gamma>0$ such that, for any smooth solution $g$ to \eqref{eq:per-pr-s}, 
there exists a positive constant $C=C(g)$ such that
\begin{equation}
\label{eq:biper-attraction-s}
| g(x, t) - g_\infty(x) | \le C \e^{-\gamma t}, \quad (x,t)\in S^1\times 
(0,\infty)
.
\end{equation}
\end{prop}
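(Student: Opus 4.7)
The plan is to mirror the proof of Theorem~\ref{th:biper-sol}, adapted to the vector-valued and reactively coupled setting: construct $g_\infty$ by Schauder's fixed-point theorem, and establish its uniqueness and the exponential attraction \eqref{eq:biper-attraction-s} via a relative-entropy argument. For existence, I would apply Schauder to the time-$1$ resolving operator $\mathcal{T} = S_1$ of \eqref{eq:per-pr-s}, acting on
\[
X = \Bigl\{ g \in L^1_+(S^1;\mathbb{R}^N) \,\Big|\, \textstyle\sum_{i=1}^N \int_{S^1} g_i \cd x = 1 \Bigr\}.
\]
Summing the $N$ equations and integrating over $S^1$ yields conservation of the total mass, while the cooperative structure ($\nu_{ij}\geq 0$ for $j\neq i$) preserves componentwise nonnegativity by the maximum principle, so $\mathcal{T}(X)\subseteq X$. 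Parabolic regularity for linear systems makes $\mathcal{T}(X)$ precompact in $L^1$, and Schauder produces a fixed point $g^*$. Averaging over a period, $\bar g = \int_0^1 S_t g^* \cd t$ satisfies $S_\tau \bar g = \bar g$ for every $\tau$ by autonomy of the system, so $\bar g$ is stationary and is the candidate for $g_\infty$. The strong maximum principle for cooperative systems gives positivity of each component, and regularity follows from standard elliptic theory.

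For uniqueness and attraction I would work with the relative entropy
\[
\E[g \mid g_\infty] = \sum_{i=1}^N \int_{S^1} g_i \ln \frac{g_i}{g_{i\infty}} \cd x.
\]
Setting $r_i = g_i/g_{i\infty}$ and carrying out a componentwise computation in the spirit of \eqref{eq:biper-sol-1}---using the stationarity identity
\[
\sigma (g_{i\infty})_{xx} + ((\Psi_i)_x g_{i\infty})_x = \textstyle\sum_{j\neq i}(\nu_{ji} g_{i\infty} - \nu_{ij} g_{j\infty})
\]
to eliminate the appropriate terms---one obtains
\[
\frac{\dd}{\dd t}\, \E[g \mid g_\infty] = -\sigma \sum_{i=1}^N \int_{S^1} g_{i\infty} \frac{(r_i)_x^2}{r_i} \cd x + \mathcal{R},
\]
where $\mathcal{R}$ collects all contributions from the inter-component coupling. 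Symmetrising $\mathcal{R}$ via the swap $i \leftrightarrow j$ and applying the elementary inequality $(r_j - r_i)(\ln r_i - \ln r_j)\leq 0$ gives $\mathcal{R}\leq 0$. Combined with the Log-Sobolev inequality \eqref{eq:varlog-sobolev} for each component relative to $g_{i\infty}$ (positive and smooth on the compact $S^1$, hence admitting a uniform Log-Sobolev constant), this yields a differential inequality $\dd\E/\dd t \leq -\gamma\,\E$. Gronwall delivers exponential entropy decay and hence uniqueness of $g_\infty$; the Csisz\'ar--Kullback inequality \eqref{eq:cs-k} promotes the decay to $L^1$-exponential attraction, and parabolic smoothing over a unit time interval upgrades $L^1$ convergence to the uniform-in-$x$ estimate \eqref{eq:biper-attraction-s} with $C$ depending on $g$ via $\E[g(\cdot,0)\mid g_\infty]$.

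The hardest step is the sign $\mathcal{R} \leq 0$: without detailed balance, the coupling produces cross terms involving the spatial dependence of the ratios $g_{j\infty}/g_{i\infty}$ that do not obviously cancel, and a careful index symmetrisation together with a (tacit) irreducibility hypothesis on the rate matrix $(\nu_{ij})$ is needed. Should the entropy route prove intractable, one can fall back on a spectral-gap argument: by parabolic smoothing, the semigroup $\e^{tL}$ generated by the right-hand side of \eqref{eq:per-pr-s} is compact on $L^2(S^1;\mathbb{R}^N)$ and, under irreducibility, positivity-improving, so the Krein--Rutman theorem yields a simple leading eigenvalue $0$ with a spectral gap $\gamma > 0$, which immediately delivers both uniqueness and exponential attraction.
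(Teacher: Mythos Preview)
The paper does not give a self-contained proof here: it simply refers to \cite[Theorem~4.1]{chko09} for existence and uniqueness and to \cite[Theorem~4.1]{hkl07} for exponential stability. Your proposal is therefore considerably more detailed than what the paper actually offers, and your spectral fallback via compactness of the semigroup and Krein--Rutman is exactly the kind of argument those references rely on; that part is sound and suffices.

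Your primary route via relative entropy is largely correct but the difficulty is not where you locate it. The sign $\mathcal{R}\le 0$ is in fact \emph{not} the hard step: carrying out the computation carefully, one finds
\[
\mathcal{R}=\sum_{i\ne j}\int_{S^1}\nu_{ij}\,g_{j\infty}\bigl[r_j-r_i-r_j\ln(r_j/r_i)\bigr]\cd x,
\]
and each bracket equals $-r_i\bigl(s\ln s-s+1\bigr)\le 0$ with $s=r_j/r_i$; no detailed balance and no symmetrisation are needed. The genuine gap is the Log--Sobolev step. The componentwise masses $\int_{S^1}g_i$ are \emph{not} individually conserved (only their sum is), so you cannot apply \eqref{eq:varlog-sobolev} to each pair $(g_i,g_{i\infty})$ directly: doing so leaves the residual term $\sum_i m_i\ln(m_i/m_{i\infty})$, the relative entropy of the mass distribution among states, which must then be controlled by the reaction part $-\mathcal{R}$ of the dissipation. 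That additional functional inequality is true under irreducibility of the rate matrix but is not the one-line argument you sketch. This is precisely why the literature (and the paper, by citation) tends to resort to the Krein--Rutman/spectral-gap route for systems without detailed balance; your fallback is the safer proof.
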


\begin{proof} The existence and uniqueness can be proved similarly to 
\cite[Theorem~4.1]{chko09}, and the stability follows in the same way as in 
\cite[Theorem~4.1]{hkl07}.
\end{proof}

We can now put
\begin{equation}
\label{eq:def-s-velocity}
\tilde v_\infty:
=
-\sum\limits_{i=1}^N
\int_{0}^{1}
(\Psi_i)_x(x) g_{i\infty}(x)
\,\dd x,
\end{equation}
arriving at the following bound.
\begin{cor}
\label{cor:asymptotic-velocity-s}
For any solution of \eqref{eq:general-system}, we have
\begin{equation}
\label{eq:asymptotic-velocity-s}
| \tilde v(t) - \tilde v_\infty | \le Ce^{-\gamma t},
\end{equation}
where $C$ depends only on $\rho_0$.

\end{cor}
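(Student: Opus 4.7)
The plan is to reduce the claim to the pointwise attraction provided by Proposition~\ref{th:biper-sol-s}, essentially by the same mechanism that was used in Corollary~\ref{cor:asymptotic-velocity} for the scalar equation. First, I would recall the computation performed in the text just before \eqref{eq:def-s-velocity}, which, via integration by parts, the decay estimate~\eqref{eq:limit-s}, the periodicity of $(\Psi_i)_x$, and the definition~\eqref{eq:rho-g-s}, yields the compact representation
$$
\tilde v(t) = -\sum_{i=1}^N \int_0^1 (\Psi_i)_x(x)\, g_i(x,t)\,\dd x,
$$
where $g(x,t) = (g_i(x,t))$ is the vector valued $1$-periodic function obtained by folding the solution $\rho$ of~\eqref{eq:general-system} via~\eqref{eq:rho-g-s}. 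This $g$ solves~\eqref{eq:per-pr-s}, again thanks to the summability of the series~\eqref{eq:rho-g-s}, which in turn rests on the decay~\eqref{eq:limit-s}.

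Next, subtracting the definition~\eqref{eq:def-s-velocity} of $\tilde v_\infty$ from this expression for $\tilde v(t)$ gives
$$
\tilde v(t) - \tilde v_\infty = -\sum_{i=1}^N \int_0^1 (\Psi_i)_x(x)\bigl(g_i(x,t) - g_{i\infty}(x)\bigr)\,\dd x,
$$
and I would bound the right-hand side by Hölder's inequality. Since the potentials $\Psi_i$ are $C^4$ and $(\Psi_i)_x$ is $1$-periodic, the constant $M := \max_i \max_{x \in [0,1]} |(\Psi_i)_x(x)|$ is finite, whence
$$
|\tilde v(t) - \tilde v_\infty| \le M \sum_{i=1}^N \int_0^1 |g_i(x,t) - g_{i\infty}(x)|\,\dd x.
$$

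Finally, I would apply Proposition~\ref{th:biper-sol-s} to $g$, which delivers the pointwise bound $|g_i(x,t) - g_{i\infty}(x)| \le C(g)\, \e^{-\gamma t}$ for $(x,t) \in S^1 \times (0,\infty)$ and $i = 1, \dots, N$. Integrating over $S^1$ and summing in $i$ produces~\eqref{eq:asymptotic-velocity-s} with a new constant of the form $C = M N\, C(g)$. The required dependence of $C$ only on $\rho_0$ comes through the fact that $g(\cdot, 0)$ is determined by $\rho_0$ via the folding formula~\eqref{eq:rho-g-s}, so $C(g)$ can be bounded in terms of the initial data alone.

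The main (and rather mild) technical point to verify is that $g$ arising from~\eqref{eq:rho-g-s} genuinely falls into the class of \emph{smooth solutions} to which Proposition~\ref{th:biper-sol-s} applies, and that the normalization $\sum_i \int_{S^1} g_i(x,t)\,\dd x = 1$ is preserved. The first follows from parabolic regularity together with the decay~\eqref{eq:limit-s}, which guarantees that the shifted series converges in a sufficiently strong topology; the second is precisely the total mass conservation~\eqref{eq:mass-conservation-s} translated to the periodized picture. Beyond these routine verifications, no further estimates are needed.
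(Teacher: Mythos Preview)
Your proposal is correct and follows precisely the approach the paper has in mind: the corollary is stated without proof, as it is immediate from the bulk-velocity formula $\tilde v(t) = -\sum_i \int_0^1 (\Psi_i)_x g_i\,\dd x$ derived just above, the definition~\eqref{eq:def-s-velocity} of $\tilde v_\infty$, and the exponential attraction~\eqref{eq:biper-attraction-s} of Proposition~\ref{th:biper-sol-s}. Your write-up simply makes these steps explicit.
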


Thus, the sign of the asymptotic velocity $\tilde v_\infty$ determines the 
direction of transport.  We now examine several situations when $\tilde 
v_\infty$ can be proved to be non-zero.  For simplicity, we restrict ourselves 
to the two-state models.  We begin with the small diffusion case.

\begin{theorem}
\label{odesys}
Let $N=2$.  Suppose that the functions $F_1=-(\Psi_1)_x$ and $F_2=-(\Psi_2)_x$ 
merely have a finite number of zeros on $S^1$, and do not admit common zeros.  
Let  $\sigma$ be sufficiently small, $\nu_{12}(x)> 0$, $\nu_{21}(x)> 0$. 

i) Assume that both $F_1$ and $F_2$ have zeros, and for every $x_*$ such that 
$F_1(x_*)F_2(x_*)=0$, let $i_*=1$ if $F_2(x_*)=0$, and $i_*=2$ if 
$F_1(x_*)=0$.  Then \begin{equation}\label{eq:sgn1} \operatorname{sign}\tilde 
v_\infty=\operatorname{sign} F_{i_*}(x_*), \end{equation} provided the sign in the 
right-hand side of \eqref{eq:sgn1} is independent of $x_*$.

ii) Assume that only one of the functions $F_1$ and $F_2$ possesses zeros.  
Let $F_{i_*}$ be the potential gradient which does not have zeros.  Then 
\begin{equation}
\label{eq:sgn2}
\operatorname{sign}\tilde v_\infty=\operatorname{sign} F_{i_*}.
\end{equation}  

iii) Assume that none of the functions $F_1$ and $F_2$ vanishes.  Then 
\begin{equation}\label{eq:sgn3} \operatorname{sign}\tilde 
v_\infty=\mathrm{sign}(F_1F_2)\,\operatorname{sign} \int\limits_0^1 
\left(\frac {\nu_{12}(x)} {F_2(x)}+\frac {\nu_{21}(x)}{F_1(x)}\right)\, dx, 
\end{equation} provided the integral in the right-hand side is non-zero.   
\end{theorem}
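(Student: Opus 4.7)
The plan is to adapt the vanishing-viscosity argument used in the proof of Theorem~\ref{odeth}. Arguing by contradiction along a sequence $\sigma_n\to 0$, I would extract subsequential limits of the stationary solutions $g_{\infty,n}=(g_{1,n},g_{2,n})$ supplied by Proposition~\ref{th:biper-sol-s} and read off the sign of $\tilde v_{\infty,n}$ from the limiting problem. The starting point is the conservation identity obtained by summing the two stationary equations in~\eqref{eq:per-pr-s}: the chemical coupling terms cancel, and one integration in $x$ gives
\begin{equation*}
-\sigma \tilde g_x(x)+F_1(x)g_{1\infty}(x)+F_2(x)g_{2\infty}(x)=\tilde v_\infty,\qquad x\in S^1,
\end{equation*}
where $\tilde g=g_{1\infty}+g_{2\infty}$; the integration constant is identified as $\tilde v_\infty$ by integrating over $S^1$ and comparing with~\eqref{eq:def-s-velocity}.

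The family $\{g_{\infty,n}\}$ is bounded in $L^1(S^1)$ by normalization, so subsequential weak-$*$ limits $g_i^0$ exist as non-negative measures on $S^1$ and the velocities converge, $\tilde v_{\infty,n}\to\tilde v_\infty^0$. Passing to the distributional limit in~\eqref{eq:per-pr-s} yields
\begin{equation*}
(F_i g_i^0)_x+\sum_{j\neq i}\nu_{ji}g_i^0=\sum_{j\neq i}\nu_{ij}g_j^0,\qquad i=1,2,
\end{equation*}
together with $F_1 g_1^0+F_2 g_2^0=\tilde v_\infty^0$ as distributions. Because $F_1$ and $F_2$ have no common zeros and the rates $\nu_{ij}$ are strictly positive, any Dirac component in $g_i^0$ would create a singular inhomogeneity in the coupling term of the other equation that cannot be matched by the remaining terms; concentration is therefore ruled out, and the pointwise identity $F_1 g_1^0+F_2 g_2^0=\tilde v_\infty^0$ holds on $S^1$.

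In cases (i) and (ii), I would evaluate this pointwise identity at a zero $x_*$ of $F_{3-i_*}$, which collapses it to $F_{i_*}(x_*)g_{i_*}^0(x_*)=\tilde v_\infty^0$; non-negativity of $g_{i_*}^0$ then forces $\operatorname{sign}\tilde v_\infty^0=\operatorname{sign} F_{i_*}(x_*)$ whenever $\tilde v_\infty^0\neq 0$. In case (iii), where neither $F_i$ vanishes, I would set $u=F_1 g_1^0$, substitute $g_2^0=(\tilde v_\infty^0-u)/F_2$ into the first limiting equation, and reduce the system to the scalar linear ODE $u_x+a(x)u=b(x)\tilde v_\infty^0$ with $a=\nu_{21}/F_1+\nu_{12}/F_2$ and $b=\nu_{12}/F_2$. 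Under the assumption $\int_0^1 a\,dx\neq 0$ of~(iii), this ODE admits a unique $1$-periodic solution linear in $\tilde v_\infty^0$; imposing the normalization $\int_{S^1}(g_1^0+g_2^0)\,dx=1$ then fixes $\tilde v_\infty^0$ explicitly, and reading off the sign yields~\eqref{eq:sgn3}. Reversing the contradiction assumption transfers the sign information back to $\tilde v_{\infty,n}$ for all large $n$.

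The hardest step will be ruling out $\tilde v_\infty^0=0$ in cases (i) and (ii), since otherwise the pointwise relation at zeros is vacuous. Under the uniform-sign hypothesis on $F_{i_*}(x_*)$, assuming $\tilde v_\infty^0=0$ forces $F_1 g_1^0+F_2 g_2^0\equiv 0$ with $g_1^0$ vanishing at every zero of $F_2$ and $g_2^0$ vanishing at every zero of $F_1$; I expect that combining this rigidity with the individual equations and strict positivity of the $\nu_{ij}$ propagates these zeros throughout $S^1$, contradicting the total-mass normalization.
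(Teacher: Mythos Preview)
Your overall strategy---vanishing viscosity, weak-$*$ compactness, and the limiting identity $F_1 g_1^0+F_2 g_2^0=\tilde v_\infty^0$---matches the paper's, and case~(iii) is essentially the same ODE reduction. But the heart of your argument for cases~(i)--(ii), namely evaluating the pointwise identity at a zero $x_*$ of $F_{3-i_*}$, rests on two steps that are not justified.

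First, the claim that Dirac concentration is ruled out is too quick. A Dirac in $g_1^0$ at $x_0$ forces $F_1(x_0)=0$ (to kill the $\delta'$ term in $(F_1 g_1^0)_x$), but once $F_1(x_0)=0$ the product $F_1\cdot c\delta_{x_0}$ vanishes as a distribution, and the ``singular inhomogeneity in the other equation'' can be absorbed by a jump of $F_2 g_2^0$ rather than a Dirac in $g_2^0$; your mismatch argument does not close. The paper does \emph{not} exclude atoms: it only shows the limit measures cannot be \emph{entirely} supported on the zero set of $F_1F_2$ (via the coupling $\nu_{12}\zeta_2=\nu_{21}\zeta_1$ when $\tilde v_\infty^0=0$). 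Second, even without atoms, weak-$*$ limits in $C^*(S^1)$ are only measures; the identity $F_1 g_1^0+F_2 g_2^0=\tilde v_\infty^0$ holds in the sense of distributions, and evaluating it at a \emph{single} point $x_*$ is not meaningful without continuity you have not established.

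The paper circumvents both problems by never evaluating pointwise. Having located an open interval $(x_1,x_2)$ between two adjacent zeros of $F_1F_2$ that carries some mass, it solves the limiting ODE system explicitly on that interval (formulas \eqref{eq:solzeta}, \eqref{eq:solzeta1}). For $\tilde v_\infty^0=0$, the explicit solution shows $F_1F_2<0$ on $(x_1,x_2)$, so both endpoints are zeros of the same $F_i$; since $F_i$ is $C^1$, $\int_{x_1}^{x_2}1/F_i=\pm\infty$, which forces $\int_{x_1}^{x_2}\zeta_i=\infty$, contradicting the unit-mass constraint. For $\tilde v_\infty^0\ne 0$, the unique finite-mass solution on the interval has a definite sign structure that yields \eqref{eq:sgn1}--\eqref{eq:sgn2}. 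Your ``propagation of zeros'' sketch for $\tilde v_\infty^0=0$ does not capture this divergence mechanism, and no pointwise argument replaces it. In case~(iii) your reduction is correct, but ``reading off the sign'' still requires invoking the positivity constraint on $\zeta$ (as the paper does via \eqref{eq:zetaper} and \eqref{eq:signm}); the normalization alone is not enough.
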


\begin{remark} In the case when $F_1$ and $F_2$ have common zeros or the sign in the right-hand side of \eqref{eq:sgn1} varies, the transport still can be present, but more subtle methods are required to establish that $\tilde v_\infty\neq 0$.
\end{remark}
\begin{remark}
In case i), if the sign in \eqref{eq:sgn1} is positive, it is simple to see 
that $\max(F_1,F_2)>0$, and there is an interval where $\min(F_1,F_2)>0$. 
Hence, the occurrence of unidirectional transport in the Neumann problem 
setting on a bounded segment follows from \cite[Theorem 1]{ps09} or 
\cite[Theorem 3.1]{hkl07}.  The cases ii) and iii) are not covered by the 
results of \cite{chk04,hkl07,ps09}.
\end{remark}

\begin{proof}[Proof of Theorem~\ref{odesys}]
 Assume there is a sequence $\sigma_n\to 0$ such that the corresponding velocities $\tilde v_{\infty n}$ violate \eqref{eq:sgn1}, or \eqref{eq:sgn2}, or \eqref{eq:sgn3}, respectively. 

The sequence of stationary solutions $\{g_{\infty n}\}$ is bounded in 
$[L^1(S^1)]^2$, so, without loss of generality, $g_{\infty n}$ converges 
weakly-$*$ in $[C^*(S^1)]^2$ to some $\zeta$.  For large $n$, the sign of 
$\tilde v_{\infty n}$ coincides with the sign of \begin{equation}
\label{eq:def-st0}
	\tilde v_{\infty0}
=
\langle 
\zeta_{1}, F_1\rangle_{C^*\times C}+\langle 
\zeta_{2}, F_2\rangle_{C^*\times C},
\end{equation} provided $\tilde v_{\infty0}\neq 0$. 
Observe that $\zeta$ solves the system
\begin{equation}
\label{eq:zetaeq}
\left\{
\begin{array}{l}
(F_1 \zeta_1)_x=\nu_{12}\zeta_2-\nu_{21}\zeta_1,
\\ (F_2 \zeta_2)_x=\nu_{21}\zeta_1-\nu_{12}\zeta_2,\\ \zeta_i\geq 0,\ \left\langle \zeta_1+\zeta_2,1\right\rangle =1
\end{array} \right.
\end{equation}
in the sense of distributions on $S^1$.  Adding these two equations, we see 
that $F_1 \zeta_1+F_2 \zeta_2$ is essentially a constant, and, due to \eqref 
{eq:def-st0},
\begin{equation}
\label{eq:def-st1}
	F_1 \zeta_1+F_2 \zeta_2=\tilde v_{\infty0}.
\end{equation}
in the sense of distributions. 

Consider first cases i) and ii).  The measures $\zeta_1$ and $\zeta_2$ cannot 
be concentrated only at the zeros of $F_1F_2$.  Indeed, if that is so, then 
\eqref{eq:def-st1} implies \begin{equation} \label{eq:def-st2}
	F_1 \zeta_1= F_2 \zeta_2 =0
\end{equation}
in the sense of distributions, so the supports of $\zeta_1$ and $\zeta_2$ are 
disjoint.  But \eqref{eq:zetaeq} gives $\nu_{12}\zeta_2=\nu_{21}\zeta_1$, so 
the supports should coincide, and we get a contradiction. 

Hence, there are two adjacent zeros $x_1$ and $x_2$ of $F_1F_2$ such that the 
support of $\zeta_1$ or $\zeta_2$ intersects with the interval $(x_1,x_2)$ (in 
the case when $F_1F_2$ has only one zero, we can change the variable $x$ for 
$\tilde x=x/2$ in \eqref{eq:general-system}, which doubles the number of zeros 
on $S^1$, but does not affect the transport direction). 

Suppose that $\tilde v_{\infty0}=0$.  Then the solution of the system 
\eqref{eq:zetaeq}, \eqref{eq:def-st1} on $(x_1,x_2)$ may be written 
explicitly, with an unknown multiplicative constant $M\neq 0$,
\begin{equation}
\label{eq:solzeta}
\left\{
\begin{array}{l}
\zeta_1(x)=M{\exp\left(\int_x^{\bar x}\frac{\nu_{12}(y)}{F_2(y)}+\frac{\nu_{21}(y)}{F_1(y)}\,dy\right)}/{F_1(x)},
\\ \zeta_2(x)=-M{\exp\left(\int_x^{\bar x}\frac{\nu_{12}(y)}{F_2(y)}+\frac{\nu_{21}(y)}{F_1(y)}\,dy\right)}/{F_2(x)},
\end{array} \right.
\end{equation}
where $\bar x=\frac {x_1 +x_2} 2$.  This gives that $F_1F_2$ is negative on 
$(x_1, x_2)$.  Consequently, $x_1$ and $x_2$ are zeros of the same 
potential --- in case ii) this is trivial --- and, for definiteness, let it be 
$F_1$.  Observe that one of the integrals $\int_{x_1}^{\bar 
x}\frac{\nu_{12}(y)}{F_2(y)}+\frac{\nu_{21}(y)}{F_1(y)}\,dy$ and 
$\int_{x_2}^{\bar x}\frac{\nu_{12}(y)}{F_2(y)}+\frac{\nu_{21}(y)}{F_1(y)}\,dy$ 
is positive.  Since $F_1$ is $C^1$-smooth, $\int_{x_1}^ {x_2}\frac 1 
{F_1(y)}\,dy=\pm\infty$.  Hence, $\int_{x_1}^ 
{x_2}\zeta_1|_{(x_1,x_2)}(y)\,dy=\infty$.  But \eqref{eq:zetaeq} implies 
$\int_{x_1}^ {x_2}\zeta_1|_{(x_1,x_2)}(y)\,dy\leq 1$, and we arrive at a 
contradiction.

Assume now that $F_1>0$ on $(x_1,x_2)$.  We then have to show that $\tilde 
v_{\infty0}< 0$.  One can check that, under the assumptions that we have made, 
the only solution of \eqref{eq:zetaeq}, \eqref{eq:def-st1} with finite 
integral on $(x_1,x_2)$ is
\begin{equation}
\label{eq:solzeta1}
\left\{
\begin{array}{l}
\zeta_1(x)=\frac{\tilde v_{\infty0}}{F_1(x)}\int_{x_1}^{x}{\exp\left(\int_x^{s}\frac{\nu_{12}(y)}{F_2(y)}+\frac{\nu_{21}(y)}{F_1(y)}\,dy\right)}
\frac {\nu_{12}(s)}{F_2(s)}\,ds,
\\ \zeta_2(x)=
\frac{\tilde v_{\infty0}}
{F_2(x)}
\left[1-\int_{x_1}^{x}
{\exp\left(\int_x^{s}\frac{\nu_{12}(y)}{F_2(y)}
+\frac{\nu_{21}(y)}{F_1(y)}\,dy\right)}
\frac {\nu_{12}(s)}{F_2(s)}\,ds
\right].
\end{array} \right.
\end{equation}
So $\zeta_1\geq 0$ and $F_2|_{(x_1,x_2)}<0$  imply $\tilde v_{\infty0}< 0$. 
The case $F_1<0$ is treated similarly. 

In case iii), the solution $\zeta$ is smooth on the whole circle $S^1$, so it 
may be identified with a $1$-periodic function on $\R$.  Moreover, it has the 
form
\begin{multline}\label{eq:solzeta2}
\zeta_1(x)=\frac M {F_1(x)}
{\exp\left(\int_x^{0}\frac{\nu_{12}(y)}
{F_2(y)}+\frac{\nu_{21}(y)}
{F_1(y)}\,dy\right)}\\+\frac{\tilde v_{\infty0}}
{F_1(x)}\int_{0}^{x}{\exp\left(\int_x^{s}\frac{\nu_{12}
(y)}{F_2(y)}+\frac{\nu_{21}(y)}{F_1(y)}\,dy\right)}
\frac {\nu_{12}(s)}{F_2(s)}\,ds,
\end{multline} \begin{multline}\label{eq:solzeta22} \zeta_2(x)=-\frac M {F_2(x)}{\exp\left(\int_x^{0}\frac{\nu_{12}(y)}{F_2(y)}+\frac{\nu_{21}(y)}{F_1(y)}\,dy\right)} \\ +\frac{\tilde v_{\infty0}}
{F_2(x)}
\left[1-\int_{0}^{x}
{\exp\left(\int_x^{s}\frac{\nu_{12}(y)}{F_2(y)}
+\frac{\nu_{21}(y)}{F_1(y)}\,dy\right)}
\frac {\nu_{12}(s)}{F_2(s)}\,ds\right].
\end{multline} 
Due to periodicity, \begin{multline}\label{eq:zetaper} M\left[ 
{\exp\left(\int_1^{0}\frac{\nu_{12}(y)}
{F_2(y)}+\frac{\nu_{21}(y)}
{F_1(y)}\,dy\right)}-1\right] \\ +
\tilde v_{\infty0}
\int_{0}^{1}{\exp\left(\int_x^{s}\frac{\nu_{12}
(y)}{F_2(y)}+\frac{\nu_{21}(y)}{F_1(y)}\,dy\right)}
\frac {\nu_{12}(s)}{F_2(s)}\,ds=0.\end{multline} Hence, $\tilde v_{\infty0}\neq 0$, for the contrary would imply $M=0$, and $\zeta\equiv 0$, which contradicts \eqref{eq:zetaeq}. 

Assume that the right-hand side of \eqref{eq:sgn3} is positive, and $\tilde 
v_{\infty0}$ is negative (the opposite situation is completely analogous).  
Then at least one of the potentials, say $F_1$, is positive.  Due to the 
positivity of the right-hand side of~\eqref{eq:sgn3},
\begin{equation}
\operatorname{sign}\left( {\exp\left(\int_1^{0}\frac{\nu_{12}(y)}
{F_2(y)}+\frac{\nu_{21}(y)}
{F_1(y)}\,dy\right)}-1\right)=-\operatorname{sign}F_2
,
\end{equation}
and \eqref{eq:zetaper} yields
\begin{equation}
\label{eq:signm}
\operatorname{sign} M = \operatorname{sign} \tilde v_{\infty0}.
\end{equation}
It follows from \eqref{eq:solzeta2} and \eqref{eq:signm} that $\zeta_1(0)<0$, 
which contradicts the positivity of the solution.
\end{proof}

We now address the transport properties of the \emph{randomly tilting 
ratchet}, i.e., the model of the form
\begin{equation} \label{eq:rt-system}
\left\{
\begin{array}{l}
(\rho_1)_t - (\rho_1)_{xx} - ([\psi_x +\omega]\rho_1)_x + \nu_{21} \rho_1  =\nu_{12} \rho_2, \\
(\rho_2)_t - (\rho_2)_{xx} - ([\psi_x +\Omega] \rho_2)_x + \nu_{12} \rho_2  = \nu_{21} \rho_1, \\
\rho(x,0) = \rho_{0}(x),
\end{array}
\right.
\end{equation}
where $\psi(x)$ is a $C^4$-smooth $1$-periodic potential, the diffusion 
coefficient $\sigma$ is taken to be $1$; $\omega, \Omega, \nu_{12} > 0, 
\nu_{21}>0$ are scalars (independent of $x$, for simplicity), and $\rho_0$ 
satisfies~\eqref{eq:ic-requirements-s}.  We assume the following non-bias 
condition (cf.~\eqref{eq:semiad-cond-unbiased}):
\begin{equation}
\label{eq:rt-unbiased}
\Omega \nu_{21} + \omega\nu_{12} = 0.
\end{equation}
Denote by $\tilde v_{\infty}(\omega,\nu)$ the corresponding bulk velocity defined by \eqref{eq:def-s-velocity}. 

The following theorem shows that the adiabatic and semi-adiabatic bulk 
velocities of the randomly tilting ratchet are the same as for the tilting 
ratchet.  Thus, the results of the previous sections may be applied to 
determine the transport direction.  In particular, the direction of the 
semiadiabatic transport is determined by the sign of $\omega$ for every 
non-constant $\psi$. 

\begin{theorem} \label{rtasa}

Let $\omega$ be fixed.  Then
\begin{equation}
\lim_{\nu_{12} = \nu_{21} \to 0} \tilde v_{\infty}(\omega,\nu) = - \frac 
{A(\omega) + A(-\omega)}2
;
\end{equation}
\begin{equation}
\lim_{
\substack{
\nu_{21} \to 0 \\
\nu_{21}/\nu_{12} \to 0
}}
\tilde v_{\infty}(\omega,\nu)
=\omega-A(\omega)
.
\end{equation}
\end{theorem}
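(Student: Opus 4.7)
My plan is to reduce both claims to the behavior of the single function $G_\nu := g_{1\infty} + g_{2\infty}$, via the identity
\begin{equation*}
\tilde v_\infty(\omega,\nu) = -\int_0^1 \psi_x(x)\,G_\nu(x)\cd x.
\end{equation*}
To establish it, expand \eqref{eq:def-s-velocity} to get
\begin{equation*}
\tilde v_\infty = -\int_0^1 \psi_x\,G_\nu\cd x - (\omega M_1 + \Omega M_2),
\end{equation*}
where $M_i = \int_0^1 g_{i\infty}\cd x$. Integrating the two stationary equations of \eqref{eq:rt-system} over $S^1$ produces the mass balance $\nu_{21}M_1=\nu_{12}M_2$, which combined with $M_1+M_2=1$ and the non-bias condition \eqref{eq:rt-unbiased} forces $\omega M_1+\Omega M_2=0$.

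For the adiabatic limit $\nu_{12}=\nu_{21}=\nu\to 0$, we have $\Omega=-\omega$ and $M_1=M_2=\tfrac12$ exactly. The drift coefficients $\psi_x\pm\omega$ are fixed and the coupling terms $\nu g_{j\infty}$ are of order $\nu$ uniformly, so standard elliptic regularity gives uniform $C^1(S^1)$-bounds on the $g_{i\infty}$. Extracting a subsequential limit $g_i^\circ$ and passing to the limit in the system, $g_i^\circ$ is a nonnegative $1$-periodic solution of \eqref{eq:st-per} with tilt $\pm\omega$ and total mass $\tfrac12$; by Proposition~\ref{pr:st-per} it must be $g_i^\circ=\tfrac12 g_\pm$. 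Uniqueness of the limit upgrades subsequential to full convergence, and Remark~\ref{rem:tilted-velocity} yields $\lim\tilde v_\infty = -\tfrac12(A(\omega)+A(-\omega))$.

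For the semiadiabatic limit the mass formula gives $M_2=\nu_{21}/(\nu_{12}+\nu_{21})\to 0$, so $g_{2\infty}\to 0$ in $L^1(S^1)$ and the task reduces to showing $g_{1\infty}\to g_+$. The source term $\nu_{21}g_{1\infty}-\nu_{12}g_{2\infty}$ in the equation for $g_{1\infty}$ has $L^1$-norm bounded by $\nu_{21}M_1+\nu_{12}M_2=2\nu_{21}M_1\to 0$, while the drift $\psi_x+\omega$ stays fixed. Integrating this equation once in $x$ and solving the resulting periodic linear first-order ODE, uniquely solvable because $\omega\neq 0$, gives uniform $W^{1,\infty}(S^1)$-bounds on $g_{1\infty}$ and hence compactness in $C(S^1)$. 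Any subsequential limit is a nonnegative $1$-periodic solution of \eqref{eq:st-per} with tilt $\omega$ and unit mass, so by Proposition~\ref{pr:st-per} it equals $g_+$, and Remark~\ref{rem:tilted-velocity} then gives $\lim\tilde v_\infty=\omega-A(\omega)$.

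The main technical obstacle is obtaining compactness for $g_{1\infty}$ in the semiadiabatic regime: the partner equation for $g_{2\infty}$ has the unbounded coefficient $\Omega$, so one cannot expect uniform regularity there, nor any direct $L^\infty$-control of the coupling term $\nu_{12}g_{2\infty}$ entering the equation for $g_{1\infty}$. The trick that circumvents this is precisely the identity $\tilde v_\infty=-\int\psi_x G_\nu\cd x$ combined with the $L^1$-smallness of $\nu_{12}g_{2\infty}$ (itself a consequence of the mass balance), which together reduce the whole question to a single equation with fixed bounded coefficients.
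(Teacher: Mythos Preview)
The paper does not prove Theorem~\ref{rtasa}; it is explicitly ``left as an exercise for the reader.''  Your argument is correct and supplies what the paper omits.

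The reduction $\tilde v_\infty(\omega,\nu)=-\int_0^1\psi_x\,G_\nu\,\dd x$, obtained from the mass balance $\nu_{21}M_1=\nu_{12}M_2$ together with the non-bias condition~\eqref{eq:rt-unbiased}, is the right starting point.  In both regimes you then correctly identify the limit of $G_\nu$ by establishing compactness of $g_{1\infty}$ (and, in the adiabatic case, of $g_{2\infty}$ as well) and invoking uniqueness of the normalized periodic solution of the limiting tilted equation (Proposition~\ref{pr:st-per}).  Your remark that the crux in the semiadiabatic regime is the $L^1$-smallness of the coupling term $\nu_{12}g_{2\infty}$---a consequence of $\nu_{12}M_2=\nu_{21}M_1\le\nu_{21}\to 0$ rather than of any pointwise control on $g_{2\infty}$---is exactly the point, since $\Omega$ is unbounded and no uniform regularity for $g_{2\infty}$ is available.

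One cosmetic remark: in the adiabatic case you appeal to ``standard elliptic regularity'' for the uniform $C^1$ bounds, but the cleanest justification in one dimension is precisely the integration trick you spell out for the semiadiabatic case: integrate once, bound the primitive of the source in $L^\infty$ by its $L^1$ norm (here $\le\nu$), determine the integration constant by averaging, and solve the resulting periodic first-order ODE using that $\int_0^1(\psi_x\pm\omega)\,\dd x=\pm\omega\neq 0$.  In the adiabatic regime $\Omega=-\omega$ is fixed, so this argument applies symmetrically to both components and avoids any delicacy about elliptic estimates with merely $L^1$ right-hand sides.
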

The proof is left as an exercise for the reader.

\bibliography{ratchets_kuv}

\def\cprime{$'$}
\begin{thebibliography}{10}

\bibitem{ah03}
R.~Ait-Haddou and W.~Herzog.
\newblock Brownian ratchet models of molecular motors.
\newblock {\em Cell biochemistry and biophysics}, 38(2):191--213, 2003.

\bibitem{as97}
R.~D. Astumian.
\newblock Thermodynamics and kinetics of a brownian motor.
\newblock {\em Science}, 276(5314):917--922, 1997.

\bibitem{bdik07}
J.-P. Bartier, J.~Dolbeault, R.~Illner, and M.~Kowalczyk.
\newblock A qualitative study of linear drift-diffusion equations with
  time-dependent or degenerate coefficients.
\newblock {\em Mathematical Models and Methods in Applied Sciences},
  17(03):327--362, 2007.

\bibitem{bena00}
I.~Bena, M.~Copelli, and C.~Van~den Broeck.
\newblock Stokes' drift: A rocking ratchet.
\newblock {\em Journal of Statistical Physics}, 101(1-2):415--424, 2000.

\bibitem{bdk08}
A.~Blanchet, J.~Dolbeault, and M.~Kowalczyk.
\newblock Travelling fronts in stochastic {S}tokes' drifts.
\newblock {\em Physica A: Statistical Mechanics and its Applications},
  387(23):5741--5751, 2008.

\bibitem{bdk09}
A.~Blanchet, J.~Dolbeault, and M.~Kowalczyk.
\newblock Stochastic {S}tokes' drift, homogenized functional inequalities, and
  large time behavior of {B}rownian ratchets.
\newblock {\em SIAM J. Math. Anal.}, 41(1):46--76, 2009.

\bibitem{bf06}
W.~R. Browne and B.~L. Feringa.
\newblock Making molecular machines work.
\newblock {\em Nature nanotechnology}, 1(1):25--35, 2006.

\bibitem{chk04}
M.~Chipot, S.~Hastings, and D.~Kinderlehrer.
\newblock Transport in a molecular motor system.
\newblock {\em ESAIM: Mathematical Modelling and Numerical Analysis},
  38(06):1011--1034, 2004.

\bibitem{chko09}
M.~Chipot, D.~Hilhorst, D.~Kinderlehrer, and M.~Olech.
\newblock Contraction in {$L^1$} for a system arising in chemical reactions and
  molecular motors.
\newblock {\em Differ. Equ. Appl.}, 1(1):139--151, 2009.

\bibitem{cl55}
E.~A. Coddington and N.~Levinson.
\newblock {\em Theory of ordinary differential equations}.
\newblock McGraw-Hill Book Company, Inc., New York-Toronto-London, 1955.

\bibitem{csi75}
I.~Csisz{\'a}r.
\newblock {$I$}-divergence geometry of probability distributions and
  minimization problems.
\newblock {\em Ann. Probability}, 3:146--158, 1975.

\bibitem{da11}
A.-L. Dalibard.
\newblock Stability of periodic stationary solutions of scalar conservation
  laws with space-periodic flux.
\newblock {\em Journal of the European Mathematical Society}, 13(5):1245--1288,
  2011.

\bibitem{dkk04}
J.~Dolbeault, D.~Kinderlehrer, and M.~Kowalczyk.
\newblock Remarks about the flashing ratchet.
\newblock In {\em Partial differential equations and inverse problems}, volume
  362 of {\em Contemp. Math.}, pages 167--175. Amer. Math. Soc., Providence,
  RI, 2004.

\bibitem{e69}
S.~D. {E}idelman.
\newblock {\em Parabolic systems}.
\newblock North Holland Pub. Co., 1969.

\bibitem{fi02}
M.~Fistul.
\newblock Symmetry broken motion of a periodically driven brownian particle:
  Nonadiabatic regime.
\newblock {\em Physical Review E}, 65(4):046621, 2002.

\bibitem{gori}
M.~Gori.
\newblock {\em Lower semicontinuity and relaxation for integral and supremal
  functionals}.
\newblock PhD thesis, University of Pisa, Apr. 2004.

\bibitem{go09}
N.~Grunewald, F.~Otto, C.~Villani, and M.~G. Westdickenberg.
\newblock A two-scale approach to logarithmic {S}obolev inequalities and the
  hydrodynamic limit.
\newblock {\em Ann. Inst. Henri Poincar\'e Probab. Stat.}, 45(2):302--351,
  2009.

\bibitem{hm09}
P.~H{\"a}nggi and F.~Marchesoni.
\newblock Artificial brownian motors: Controlling transport on the nanoscale.
\newblock {\em Reviews of Modern Physics}, 81(1):387, 2009.

\bibitem{hmn05}
P.~H{\"a}nggi, F.~Marchesoni, and F.~Nori.
\newblock Brownian motors.
\newblock {\em Annalen der Physik}, 14(1-3):51--70, 2005.

\bibitem{hkl07}
S.~Hastings, D.~Kinderlehrer, and J.~B. McLeod.
\newblock Diffusion mediated transport in multiple state systems.
\newblock {\em SIAM J. Math. Anal.}, 39(4):1208--1230, 2007.

\bibitem{jl98}
K.~M. Jansons and G.~Lythe.
\newblock Stochastic {S}tokes drift.
\newblock {\em Physical Review Letters}, 81(15):3136, 1998.

\bibitem{klz07}
E.~Kay, D.~Leigh, and F.~Zerbetto.
\newblock Synthetic molecular motors and mechanical machines.
\newblock {\em Angewandte Chemie International Edition}, 46(1-2):72--191, 2007.

\bibitem{kk02}
D.~Kinderlehrer and M.~Kowalczyk.
\newblock Diffusion-mediated transport and the flashing ratchet.
\newblock {\em Arch. Ration. Mech. Anal.}, 161(2):149--179, 2002.

\bibitem{ms13}
S.~Mirrahimi and P.~E. Souganidis.
\newblock A homogenization approach for the motion of motor proteins.
\newblock {\em Nonlinear Differential Equations and Applications NoDEA},
  20(1):129--147, 2013.

\bibitem{o87}
R.~Ortega.
\newblock A counterexample for the damped pendulum equation.
\newblock {\em Acad. Roy. Belg. Bull. Cl. Sci. (5)}, 73(10):405--409, 1987.

\bibitem{ot00}
R.~Ortega and M.~Tarallo.
\newblock Degenerate equations of pendulum-type.
\newblock {\em Commun. Contemp. Math.}, 2(2):127--149, 2000.

\bibitem{par02}
J.~Parrondo and B.~J. de~Cisneros.
\newblock Energetics of brownian motors: a review.
\newblock {\em Applied Physics A}, 75(2):179--191, 2002.

\bibitem{ps09a}
B.~Perthame and P.~E. Souganidis.
\newblock Asymmetric potentials and motor effect: a homogenization approach.
\newblock {\em Ann. Inst. H. Poincar\'e Anal. Non Lin\'eaire},
  26(6):2055--2071, 2009.

\bibitem{ps09}
B.~Perthame and P.~E. Souganidis.
\newblock Asymmetric potentials and motor effect: a large deviation approach.
\newblock {\em Arch. Ration. Mech. Anal.}, 193(1):153--169, 2009.

\bibitem{ps11}
B.~Perthame and P.~E. Souganidis.
\newblock A homogenization approach to flashing ratchets.
\newblock {\em Nonlinear Differential Equations and Applications NoDEA},
  18(1):45--58, 2011.

\bibitem{pe84}
F.~O. Porper and S.~D. {\`E}{\u\i}del{\cprime}man.
\newblock Two-sided estimates of the fundamental solutions of second-order
  parabolic equations and some applications of them.
\newblock {\em Uspekhi Mat. Nauk}, 39(3(237)):107--156, 1984.

\bibitem{rei02}
P.~Reimann.
\newblock Brownian motors: noisy transport far from equilibrium.
\newblock {\em Physics Reports}, 361(2):57--265, 2002.

\bibitem{rh02}
P.~Reimann and P.~H{\"a}nggi.
\newblock Introduction to the physics of brownian motors.
\newblock {\em Applied Physics A}, 75(2):169--178, 2002.

\bibitem{sg06}
R.~Salgado-Garc{\'\i}a, M.~Aldana, and G.~Mart{\'\i}nez-Mekler.
\newblock Deterministic ratchets, circle maps, and current reversals.
\newblock {\em Physical review letters}, 96(13):134101, 2006.

\bibitem{ser61}
J.~Serrin.
\newblock On the definition and properties of certain variational integrals.
\newblock {\em Trans. Amer. Math. Soc.}, 101:139--167, 1961.

\bibitem{vor11}
D.~Vorotnikov.
\newblock The flashing ratchet and unidirectional transport of matter.
\newblock {\em Discrete Contin. Dyn. Syst. Ser. B}, 16(3):963--971, 2011.

\bibitem{vor13}
D.~Vorotnikov.
\newblock Analytical aspects of the {B}rownian motor effect in randomly
  flashing ratchets.
\newblock {\em J. Math. Biol.}, 68(7):1677--1705, 2014.

\end{thebibliography}

\bibliographystyle{abbrv}

\end{document}